\newtheorem{theorem}{Theorem}[section]
\newtheorem{proposition}[theorem]{Proposition}
\newtheorem{corollary}[theorem]{Corollary}
\newtheorem{lemma}[theorem]{Lemma}
\newtheorem{problem}[theorem]{Problem}
\newtheorem{claim}[theorem]{Claim}
\newtheorem*{theorem*}{Theorem}
\newtheorem*{corollary*}{Corollary}
\theoremstyle{definition}
\newtheorem{definition}[theorem]{Definition}
\newtheorem{rmk}[theorem]{Remark}
\newcommand{\al}{\omega_1}
\renewcommand{\P}{\mathbb {P}}
\newcommand{\Q}{\mathbb {Q}}
\newcommand{\dQ}{\dt{\mathbb {Q}}}
\newcommand\restr[2]{{
  \left.\kern-\nulldelimiterspace 
  #1 
  \littletaller 
  \right|{#2} 
  }}
\newcommand{\littletaller}{\mathchoice{\vphantom{\big|}}{}{}{}}
\DeclareMathOperator{\supt}{supt}
\newcommand{\seb}{\subseteq}
\newcommand{\res}{\!\upharpoonright\!}
\newcommand{\tup}[1]{\langle#1\rangle}
\newcommand{\OGA}{\mathrm{OGA}}
\newcommand{\ZFC}{\mathrm{ZFC}}
\newcommand{\dt}[1]{\dot{#1}}
\newcommand{\forces}{\Vdash}
\DeclareMathOperator{\hht}{ht}
\DeclareMathOperator{\lex}{lex}
\DeclareMathOperator{\tp}{tp}
\newcommand{\stp}{\tp_{\perp}}
\newcommand{\qtext}[1]{\text{``#1''}}
\title{Entangled Suslin lines and OGA}
\author{Carlos Martínez-Ranero}
\address[Martínez-Ranero]{Department of Mathematics, Universidad de Concepción, Concepción, Chile}
\email{cmartinezr@udec.cl}
\urladdr{www2.udec.cl/\textasciitilde cmartinezr}
\author{Lucas Polymeris}
\address[Polymeris]{Department of Mathematics, Universidad de Concepción, Concepción, Chile}
\email{l.polymeris@proton.me}
\thanks{MSC 2020: <completar>}
\thanks{\emph{Keywords and}: Entangled sets, Suslin lines, Forcing.}
\thanks{The first named author was partially supported by Proyecto VRID-Investigación  No. 220.015.024-INV}
\begin{document}

\begin{abstract}
We construct a model of the Open Graph Axiom (OGA) in which there is  a 2-entangled Suslin line $S$. Consequently, in this model, there is a 2-entangled uncountable linear order, but no such order is separable. This resolves a problem posed by  Carroy, Levine, and Notaro \cite{carroy2025} and answers a question from McKenney on MathOverflow \cite{Mckenney2014}.
\end{abstract}
\maketitle


\section{Introduction and main results}

Recall that an uncountable linear order $L$ is called $2$-entangled if
for every  pairwise disjoint sequence  $\{(x_{\xi},y_{\xi}) : \xi < \omega_{1}\} \seb
[L]^{2}$, and any function $t : 2 \to \{<,>\}$ there exist distinct
$\xi,\eta<\al$ such that $x_{\xi} \mathrel{t(0)} x_{\eta}$ and
$y_{\xi} \mathrel{t(1)} y_{\eta}$.  The concept of an entangled set of reals
was introduced by  Abraham and  Shelah \cite{AS81} as a strong witness of
non-minimality: no $2$-entangled linear order contains an (uncountable)
minimal suborder.

Todorcevic \cite{Todorcevic89} observed that  any $2$-entangled
linear order is c.c.c., which naturally leads to the question of whether  a $2$-entangled
Suslin line can exist.  Krueger \cite{Krueger2020suslin} answered this affirmatively by constructing  a c.c.c. forcing to add such a
Sulin line. Later,  Carroy, Levine, and Notaro \cite{carroy2025}
constructed a $2$-entangled Suslin line from $\diamondsuit$ and
asked whether it is consistent that there is a $2$-entangled uncountable linear
order, but no such order is separable. A natural approach to this problem is to  construct a model where the Open Graph Axiom ($\OGA$) holds
while preserving a $2$-entangled Suslin line.  Whether such a model exists was asked by McKenny in MathOverflow \cite{Mckenney2014}.

In this paper, we show that this is indeed possible,  assuming the existence of a supercompact cardinal. The proof is divided into five main steps, which we outline here to guide the reader. We begin with the $2$-entangled Suslin line constructed by Krueger, which is presented as a lexicographically ordered Suslin tree,
$(S,<_{S},<_{\lex})$.

\begin{itemize}\itemsep0em
  \item
  \emph{Step 1}: We show that
  $S$ satisfies a stronger
  form of $2$-entangledness, which we call weakly
  bi-entangledness. This property depends not only on the linear order, but also on the underlying Suslin tree structure.
  This stronger form is essential for iterating certain forcings while preserving that $S$ is $2$-entangled. (Section~\ref{sec:wbe})

  \item \emph{Step 2:} We define a strengthening of  properness for forcing posets, inspired
  by the works of Shelah \cite{Shelah98}, Schlindwein \cite{Schlindwein94} and Krueger \cite{Krueger2020}. We call this property $E_{S}$-\emph{properness}. We
  show that if $\P$ is $E_{S}$-proper and
  forces that $(S,<_S)$ remains Suslin, then it also
  forces that $(S,<_S,<_{\lex})$ remains  weakly bi-entangled. (Section~\ref{sec:ET-proper})

  \item \emph{Step 3:} To handle all relevant instances of $\OGA$ via iteration, we need forcings that are both $E_{S}$-proper
  and  preserve that $(S,<_S)$ is a Suslin tree.
  We show this holds for a variant of the forcings for $\OGA$
  used in \cite{Todorcevic2011}, which are already known to preserve a given Suslin tree. (Section~\ref{sec:forcing-OGA})

  \item \emph{Step 4:} We prove a preservation theorem for countable support iterations. While Miyamoto's Theorem \cite{Miyamoto93} shows that such iterations preserve a given Suslin tree if each individual poset does, the difficult part is to prove that
  countable support iterations of $E_{S}$-proper forcings remain $E_{S}$-proper. This is the most technical part of the paper. (Section~\ref{sec:iteration})
  \item \emph{Step 5:} Finally, in Section~\ref{sec:final-model}, we start from a supercompact cardinal (which will be used as a bookkeeping device) and perform a countable support iteration to handle  all the relevant  instances of $\OGA$. This yields  a model where OGA holds and $S$ remains a $2$-entangled Suslin
  line.
\end{itemize}

\section{Preliminaries}\label{sec:prelim}
In this section, we will review the necessary background  and establish some notation that will be used throughout the rest of the paper.
\subsection{Trees and lexicographic orderings}

A  set-theoretic tree is a partially ordered set $(T,<_{T})$  such that
for any node $t \in T$, the set of predecessors  $\{s \in T : s <_{T} t\}$   is   well-ordered by $<_T$. The \emph{height} of $t$, written $\hht_{T}(t)$ is the unique ordinal representing the  order-type of the set of its predecessors.  For an ordinal $\alpha$, the
\emph{$\alpha$-th} level of $T$, written $T_{\alpha}$, consists
of the elements of $T$ of height $\alpha$. The \emph{height of $T$}
is the least ordinal $\alpha$ for which $T_{\alpha}$ is empty.

A \emph{branch} of $T$ is any downward closed
chain of $T$, and a branch is called \emph{cofinal} if its height is equal to the height
of $T$. For each $t \in T$ and $\xi \le \hht(t)$, we let
$t\res \xi$ be the unique $s \in T$ of height $\xi$ such that
$s \le_{T} t$. If the tree is clear from the context, then $t \perp s$ denotes the fact  that $t$ and $s$ are incomparable in the poset $(T,\le_{T})$. The tree $T$ is said to be \emph{well-pruned} if
 all nodes $t \in T$ have uncountably many extensions.

For incomparable $t,s \in T$, we let
$\Delta(t,s)$  be the least ordinal  $\alpha$ such that
$t \res \alpha \neq s \res \alpha$. We leave $\Delta$ undefined
if $t$ and $s$ are comparable.

An $\omega_{1}$-tree is a tree of height $\omega_{1}$ with
all its levels countable. An  $\al$-tree is called an \emph{Aronszajn
tree} if it has no uncountable chains, equivalently, no cofinal branches.
An $\al$-tree is called a \emph{Suslin tree} if it has no uncountable chains and no uncountable
antichains. A typical argument shows that if $T$ is a well-pruned
$\omega_{1}$-tree, then it is Suslin if and only if  it has no uncountable
antichains.

We will now review some  properties of linear orders which are needed for our main results.
\begin{definition} Let $L$ be a linearly order set.
\begin{enumerate}[{(1)}]
    \item $L$ is \emph{dense} if for any $a<_Lb$ there exists $c$ such that $a<_L  c <_L b$.
    \item $L$ has the \emph{countable chain condition} or \emph{c.c.c.}, if any family of pairwise disjoint nonempty open intervals is countable.
    \item $L$ is \emph{order-theoretic separable} if there exists a countable set $D$ such that for any $a<_L b$ there is a $d\in D$ such that $a\leq_L d\leq_L.$
    \item $L$ is \emph{topologically separable} if its separable in the order topology, i.e., there is a countable set $D$
such that $D$ intersects every nonempty open interval.
\end{enumerate}

\end{definition}
It is easy to see that a linear order $L$ is order-theoretic separable iff it is isomorphic to a suborder of the reals.
\begin{rmk}
    Notice that the two notions of separable agree on dense linear orders. However, they disagree in general. For example, the double arrow of Alexandroff is topologically separable but not order-theoretic separable.
\end{rmk}

 Observe that any topologically separable linear order is c.c.c.
 \begin{definition}
     A \emph{Suslin} line is a linear order which is c.c.c. and not topologically separable.
 \end{definition}

There is a strong relationship between trees and linearly ordered sets, given by the following procedure.

Assume that $T$ is an Aronszajn tree and that for any $\alpha<\al$, we have a linear order   $\triangleleft_{\alpha}$
of $T_\alpha$.  Then this induces a \emph{lexicographic}
ordering of $T$ by letting
$t <_{\lex} s$ if either $t <_{T} s$ or $t \perp s$ and
$t\res \Delta(t,s) \triangleleft_\alpha s\res \Delta(t,s)$ where $\alpha=\Delta(t,s)$. This is always
a linear order on $T$ extending $\le_{T}$. Any order obtained in this
fashion is called a \emph{lexicographic ordering} of $T$.
Moreover, any Suslin line contains an uncountable suborder isomorphic
to a lexicographically ordered Suslin tree, and any lexicographic
ordering of a Suslin tree contains a Suslin line.

For more information on trees, the reader can consult \cite{Todorcevic1984}. Also, we
do not assume that our trees are
Hausdorff (i.e. nodes of limit height are not necessarily determined by their predecessors).
\subsection{Entangled linear orders}

Let $L$ be an uncountable linear order and $m\in\omega$. Given $a\in [L]^m$ we will identify it with the sequence $\{a(i):i<m\}$  given by its increasing enumeration. We say that a pairwise disjoint sequence $\{a_\xi:\xi<\al\}\subseteq [L]^m$ is {\emph separated} if there is a $c\in [L]^{m-1}$ such that $ a_\xi(i)<c(i)<a_{\xi}(i+1) $ for $i<m-1$. By a {\emph type} we mean a function $t:m\to \{<,>\}.$
\begin{definition}
    Let $L$ an uncountable linear order, $m\in\omega,$ and $t:m\to\{<,>\}$ be a type and $a,b\in [L]^m$ disjoint.
    \begin{enumerate}[{\rm (1)}]
        \item We say that $(a,b)$ \emph{ realizes} the type $t$ if $a(i) t(i) b(i)$ for $i<m$.
        \item We denote by $\tp(a,b)$ the only type realized by $(a,b).$
    \end{enumerate}
\end{definition}
We can now define the notion of $m$-entangled sets and some variations of it.
\begin{definition}
Let $L$ be an uncountable linear order and $m\in\omega$.
\begin{enumerate}[{\rm (1)}]
    \item $L$ is $m$-\emph{entangled} if for any pairwise disjoint sequence $\{a_\xi:\xi<\al\}\subseteq [L]^m$ and any type $t:m\to \{<,>\}$ there exists $\xi\ne \eta<\al$ such that $\tp(a_\xi,a_\eta)=t$.
    \item $L$ is \emph{weakly} $m$-\emph{entangled} if for any separated pairwise disjoint sequence $\{a_\xi:\xi<\al\}\subseteq [L]^m$ and any type $t:m\to \{<,>\}$ there exists $\xi\ne \eta<\al$ such that $\tp(a_\xi,a_\eta)=t$.
\end{enumerate}
\end{definition}

Entangled linear orders are very interesting combinatorial objects with strong topological properties. For example,  Todorcevic \cite{Todorcevic85} points out without a proof  that any 2-entangled linear order is c.c.c. and any 3-entangled linear order is topologically separable (for proofs of the aforementioned facts see \cite{Krueger2020suslin}).


We will need the following
result, which is proved in
\cite[Proposition 3.8]{Krueger2020suslin}.

\begin{proposition}\label{prop:dense-w2e-2e}
  Let $(S,<_{S},<_{\lex})$ be a lexicographically ordered Suslin
  tree such that $(S,<_{\lex})$ is a dense linear order. Then
  $(S,<_{\lex})$ is $2$-entangled if and only if it is weakly $2$-entangled.
\end{proposition}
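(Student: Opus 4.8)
The forward implication is immediate: by definition every separated pairwise disjoint sequence is in particular a pairwise disjoint sequence, so if $(S,<_{\lex})$ is $2$-entangled then the defining condition already holds for the separated sequences, which is exactly weak $2$-entangledness. The whole content is the converse, and this is where I expect density to be used essentially. I would argue the contrapositive: assuming $(S,<_{\lex})$ is \emph{not} $2$-entangled, I will produce a separated witness to the failure of weak $2$-entangledness. So fix a pairwise disjoint $\{a_\xi=(x_\xi,y_\xi):\xi<\omega_1\}\seb[S]^2$ (each pair written in increasing $<_{\lex}$-order) and a type $t$ that is never realized. The first observation is that an unrealized type forces the sequence to be \emph{monotone}: since all the $x_\xi,y_\xi$ are distinct, "$t$ is never realized" says exactly that the correspondence $x_\xi\mapsto y_\xi$ is strictly monotone — increasing when $t\in\{(<,>),(>,<)\}$ and decreasing when $t\in\{(<,<),(>,>)\}$. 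Since any uncountable subsequence is again pairwise disjoint, monotone, and fails the same type $t$, it suffices to extract an uncountable \emph{separated} subsequence: that subsequence is a separated pairwise disjoint sequence on which $t$ is unrealized, contradicting weak $2$-entangledness.

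Two structural facts drive the search for the separator. First, $(S,<_{\lex})$ is c.c.c., as is any lexicographic ordering of a Suslin tree (a pairwise disjoint family of nonempty open intervals yields an antichain of $S$, using that the cones of $S$ are $<_{\lex}$-convex). Second — and here density enters — a c.c.c.\ dense linear order contains no strictly increasing and no strictly decreasing $\omega_1$-indexed sequence, since the consecutive points of such a sequence would bound uncountably many pairwise disjoint nonempty open intervals. The \textbf{key lemma} is then: in a dense c.c.c.\ linear order, every uncountable monotone pairwise disjoint sequence of pairs admits an uncountable subsequence separated by a single point $c$, i.e.\ $x_\xi<_{\lex}c<_{\lex}y_\xi$ for all $\xi$ in the subsequence.

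The decreasing case is clean. Whenever $x_\xi<_{\lex}x_\eta$ one has $y_\xi>_{\lex}y_\eta$, so $J_\xi\supsetneq J_\eta$ and in fact every $x_\xi$ precedes every $y_\eta$; thus each $y_\eta$ is a strict upper bound of $X=\{x_\xi:\xi<\omega_1\}$, whence $y_\eta\ge\sup X$. By the no-monotone-sequence fact, $X$ cannot be cofinal in $\sup X$ with uncountable cofinality, so there is a point $c<\sup X$ with $\{\xi:x_\xi<_{\lex}c\}$ uncountable; this set is separated by $c$, since $x_\xi<_{\lex}c<\sup X\le y_\xi$ for each of its members.

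The increasing case is the technical heart and the main obstacle: the intervals are staggered rather than nested, there is no global separation of the $x$'s from the $y$'s, and a single condensation point need not work. I would treat it by a transfinite construction under the assumption that no point lies in uncountably many $J_\xi$: one alternately builds disjoint "staircases" of intervals running to the right and, whenever a staircase stalls, restricts attention to the uncountably many pairs lying to the left of the stalling bound. The no-monotone-$\omega_1$-sequence property controls both movements — a staircase of length $\omega_1$, or an $\omega_1$-sequence of strictly decreasing stalling bounds, each produce uncountably many pairwise disjoint nonempty open intervals, contradicting the c.c.c.; density keeps all intervals nonempty and lets one finally pick the separating point strictly between endpoints (a short additional argument shows the uncountable pool of pairs cannot evaporate at limit stages, since that would force uncountably many coincident endpoints). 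In either case the resulting separated subsequence contradicts weak $2$-entangledness, completing the proof.
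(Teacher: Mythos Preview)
The paper does not give its own proof; it simply cites \cite[Proposition~3.8]{Krueger2020suslin}. Your overall strategy---deduce monotonicity of $x_\xi\mapsto y_\xi$ from the unrealized type, then extract a separated uncountable subfamily using c.c.c.\ and density---is correct, and the decreasing case is fine (replace the possibly nonexistent $\sup X$ by any member of a countable cofinal subset of $X$, which exists since an $\omega_1$-increasing sequence would violate c.c.c.).

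The increasing case, however, is not actually proved. The ``staircase'' construction is only gestured at: you never say how a staircase is initialized inside the current pool, what guarantees the stalling bounds strictly decrease, or why the pool of pairs remains uncountable after each restriction and at limits. A cleaner argument avoids all of this. Write $J_\xi=(x_\xi,y_\xi)$. If for some $\xi$ the set $\{\eta:x_\eta\in J_\xi\}$ is uncountable, then $c=y_\xi$ separates those $\eta$, since $x_\xi<x_\eta$ gives $y_\xi<y_\eta$ by monotonicity; symmetrically, if $\{\eta:y_\eta\in J_\xi\}$ is uncountable then $c=x_\xi$ works. So assume both sets are countable for every $\xi$, and suppose toward a contradiction that no $c$ lies in uncountably many $J_\xi$. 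Then each $J_\eta$ meets only countably many $J_\xi$: an overlap with $x_\xi<x_\eta$ forces $x_\eta\in J_\xi$, and there are only countably many such $\xi$ by the no-common-$c$ hypothesis applied to $c=x_\eta$; an overlap with $x_\xi>x_\eta$ forces $x_\xi\in J_\eta$, countable by the reduction just made. The overlap graph on $\omega_1$ thus has countable degree, so a routine transfinite recursion produces an uncountable pairwise disjoint family of nonempty open intervals, contradicting the c.c.c.
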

\subsection{The Open Graph Axiom}
Let $X$ be a separable metric space. A \emph{graph} on $X$ is a structure of the form $G:=(X,E)$ where $X$ denotes  the set of \emph{vertices} and $E$ is an irreflexive, symmetric subset of $X^2$ is the set of \emph{edges}. We say that $G$ is an \emph{open graph} if the set $E$ is open in $X^2$ with the product topology. A subset $Y\subseteq X$ is called a \emph{complete subgraph} if $[Y]^2\subseteq E$. A subset $Y\subseteq X$ is called  \emph{independent } if $[Y]^2\cap E=\emptyset$. A graph $G=(X,E)$ is \emph{countably chromatic} if there is a countable decomposition $X=\bigcup_{n\in\omega}X_n$ of its vertex set such that $X_n$ is independent for all $n\in\omega.$ Notice that, trivially, if $X$ has an uncountable complete subgraph, then it cannot have a countable  chromatic number.

Todorcevic \cite{Todorcevic89} introduced the following dichotomy for open graphs, called the \emph{Open Graph Axiom}.
\begin{definition}
    \textbf{OGA}: Let $X$ be a separable metric space and $G=(X,E)$ be an open graph. Then one of the following alternatives holds.
    \begin{enumerate}[{\rm (1)}]
        \item $G$ is countably chromatic or
        \item $G$ has an uncountable complete subgraph.
    \end{enumerate}
\end{definition}

The following Proposition is a well-known result of Todorcevic, but we prove it here for the sake of completeness and  future reference.

\begin{proposition}
    If there is a 2-entangled topologically separable linear order, then OGA fails.
\end{proposition}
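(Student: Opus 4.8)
The plan is to derive $\neg\OGA$ by producing a single open graph on a separable metric space that defeats both horns of the dichotomy. The first — and genuinely essential — step is to upgrade topological separability to order-theoretic separability, so that $L$ may be realized as a suborder of $\mathbb{R}$ and the ambient space below is actually metrizable. I would do this by showing that $L$ has only countably many \emph{jumps}, i.e. pairs $a <_L b$ with $(a,b)=\es$. Were there uncountably many, then, since each point is the endpoint of at most two jumps, I could refine to a pairwise disjoint family $\{\{a_\xi,b_\xi\}:\xi<\al\}\seb[L]^2$ of jumps with $a_\xi<_Lb_\xi$; applying $2$-entangledness with the type sending $0\mapsto{<}$, $1\mapsto{>}$ would give $\xi\neq\eta$ with $a_\xi<_La_\eta$ and $b_\xi>_Lb_\eta$, whence $a_\eta\in(a_\xi,b_\xi)$, contradicting that the latter is a jump. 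Adjoining the (now countable) set of jump endpoints to a countable topologically dense set yields a countable order-dense set, so $L$ is order-separable and may be assumed to be a subset of $\mathbb{R}$.

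Next I would set $X=\{(a,b)\in L^2 : a<b\}\seb\mathbb{R}^2$ with the subspace topology, a separable metric space, and declare $\{(a,b),(c,d)\}\in E$ exactly when $(a-c)(b-d)<0$, i.e. when the two pairs are oppositely ordered in the two coordinates. This $E$ is symmetric, irreflexive, and open in $X^2$. Identifying $(a,b)\in X$ with $\{a,b\}\in[L]^2$, an edge is precisely the realization of the type $({<},{>})$ (equivalently its reverse), whereas two distinct members with distinct first and distinct second coordinates are non-adjacent exactly when they realize $({<},{<})$.

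It then remains to rule out both alternatives. For an uncountable complete subgraph $Y$, adjacency forces all first coordinates and all second coordinates to be distinct, so each point of $L$ occurs in at most two members of $Y$; hence I can thin $Y$ to a pairwise disjoint uncountable $Y'$, still complete. Applying $2$-entangledness to $Y'$ with the type $({<},{<})$ produces two members in the \emph{same} order, i.e. non-adjacent, contradicting completeness. For countable chromaticity, suppose $X=\bigcup_{n\in\omega}X_n$ with each $X_n$ independent. Fixing any uncountable pairwise disjoint family in $X$ (available since $L$ is uncountable), the pigeonhole principle puts an uncountable pairwise disjoint subfamily into some $X_n$, and $2$-entangledness with the type $({<},{>})$ yields two oppositely ordered members of $X_n$, i.e. an edge inside an independent set — again a contradiction. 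Thus $G$ is neither countably chromatic nor has an uncountable complete subgraph, so $\OGA$ fails.

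The step I expect to be the main obstacle is the metrizability reduction: topological separability alone does not furnish a separable metric space (e.g. the double arrow), and it is exactly $2$-entangledness, through the jump-counting argument, that forces $L$ into $\mathbb{R}$. Once that is in hand, the graph-theoretic half is the routine ``entangledness kills both $\OGA$ alternatives'' computation.
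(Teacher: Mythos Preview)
Your proof is correct and follows essentially the same approach as the paper: build an open graph on (a subset of) $L^2$ whose edges encode one binary type and whose non-edges encode the other, then use $2$-entangledness to defeat both OGA alternatives. The only notable differences are cosmetic or expository: you use the \emph{opposite-order} relation as the edge set where the paper uses the \emph{same-order} relation (so the roles of the constant and non-constant types are swapped), and you supply the jump-counting argument for the passage from topological to order-theoretic separability, which the paper asserts in one line (``by going to an uncountable suborder, if necessary'') without justification. Your treatment of pairwise disjointness before invoking entangledness is also more careful than the paper's.
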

\begin{proof}
    Let $L$ be a 2-entangled topologically separable linear order. By going to an uncountable suborder, if necessary, we may assume that $L$ is order-theoretic separable. Thus, $L$ is (isomorphic to) a suborder of the reals. Consider the  open graph with vertices $L^2$ and edge set $E$ given by $$E=\{\{(x_0,x_1),(y_0,y_1)\}\in [L]^2:  x_0<_Ly_0 \ \text{iff}\ x_1<_L y_1\}.$$
    We will prove that both alternatives of OGA fail. To see this, fix an uncountable subset $Y\subseteq L^2$. Let $t_0$ to be any constant type i.e.  any type that satisfies $t_0(0)= t_0(1)$ and let $t_1$ be any non constant type i.e. any type that satisfies   $t_1(0)\ne t_1(1)$. Since $L$ is 2-entangled and $Y$ is uncountable, then there exists $(a,b), (c,d)\in [Y]^2$ such that  $\tp(a,b)=t_0$ and $\tp(c,d)=t_1$.
    First, notice that $Y$ is not an independent subset since $(a,b)\in E$ (by definition of $G$).  This implies that $G$ is not countably chromatic (since no uncountable subset is independent).  Next, observe that $Y$ is not a complete subgraph since $(c,d)\not\in E$ (by definition).  Thus, both alternatives of OGA fail.

\end{proof}

\section{Weakly bi-entangledness}\label{sec:wbe}
In this section, we establish several fundamental properties of weakly entangled lexicographically ordered Aronszajn trees. We then introduce a stronger notion, termed weakly bi-entangled, and conclude by proving the consistency of the existence of a weakly bi-entangled Suslin line.

Before going any further, we need to add some notation.

Let $(T,<_T,<_{\lex})$ be a lexicographically ordered Aronszajn tree.
 To avoid ambiguities we will always
use $<_{T}$ to refer to the tree order, and $<$ or $<_{\lex}$ to refer to  the
lexicographic ordering. If $a,b \in [T]^{2}$ and $t : 2 \to \{<,>\}$
is a type,
we will use $\stp(a,b) = t$ to abbreviate the fact that
$\tp(a,b) = t$ and that $a(i) \perp b(i)$
for all $i < 2$.

We will define a strengthening  of weakly 2-entangled, which we call weakly bi-entangled, inspired by the work of Miyamoto and Yorioka \cite{MiyamotoYorioka20}. We will show that this stronger version is well behaved under  countable support iterations.
\begin{definition}\label{df:wk-bi-e}
  We say that $(T,<_T,<_{\lex})$ is \emph{weakly bi-entangled} if for every
  uncountable pairwise disjoint family  $A \seb [T]^{2}$ which is separated there
  is an $\xi_0 < \omega_{1}$ such that for all $b \in A $ with $\min\{\hht(a(0)),\hht(a(1))\}\ge\xi_0 $
  and every type $t : 2 \to \{<,>\}$,
  there is $a \in A \cap [T\res \xi_0]^{2}$ such that
  $\stp(a,b) = t$.
\end{definition}
It follows from the definitions  that if $T$ is weakly bi-entangled, then $T$ is weakly $2$-entangled.
The following definition is  key for our preservation results.
\begin{definition}\label{def:normal}
    Let $(T,<_T,<_{\lex})$ be a lexicographically ordered Aronszajn tree. A subset $B\subseteq [T]^2$ is called \emph{normal} if the following conditions hold:
    \begin{itemize}
        \item $B$ is an uncountable collection of pairwise disjoint sets.
        \item For every $b\in B,$ $\hht_T(b(0))=\hht_T(b(1))$, we denote its common value by $\hht_T(b)$.
        \item The set $\Gamma:=\{\Delta(b): b\in B\}$ is bounded and $\sup(\Gamma)<\min(\hht_T(B)).$
        \item The set $\{\hht_T(b): b\in B\}=[\min(\hht_T(B)),\al).$
    \end{itemize}

\end{definition}
The next Lemma allows us to reduce the verification of weakly bi-entangledness to  normal sequences.

\begin{lemma}\label{lem:wbe-eq}
  Let $T$ be a lexicographically ordered Aronszajn tree. The following
  are equivalent:
  \begin{enumerate}[(a)]
    \item $T$ is weakly bi-entangled.
    \item For every normal $A \seb [S]^{2}$ and $t : 2 \to \{<,>\}$ there
    are $a,b \in A$ such that $\hht(a) < \hht(b)$ and $\stp(a,b) = t$.
    \item For every uncountable pairwise disjoint and separated
    $A \seb [S]^{2}$, there are $a,b \in A$ such that
    $\max\{\hht(a(0)),\hht(a(1))\} < \min\{\hht(b(0)),\hht(b(1))\}$ and
    $\stp(a,b) = t$.
  \end{enumerate}
\end{lemma}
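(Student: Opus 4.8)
The plan is to prove the two equivalences $\text{(a)}\Leftrightarrow\text{(c)}$ and $\text{(a)}\Leftrightarrow\text{(b)}$, relying throughout on three preliminary observations. First, since every level of $T$ is countable and the members of any family are pairwise disjoint, for each fixed $\xi<\al$ only countably many $b$ in an uncountable family meet $T\res\xi$; hence uncountably many $b$ satisfy $\min\{\hht(b(0)),\hht(b(1))\}\ge\xi$. Second, the four types may be treated one at a time: the property asserting a witnessing $\xi_0$ is monotone in $\xi_0$ (raising $\xi_0$ shrinks the set of eligible high $b$ and enlarges $A\cap[T\res\xi_0]^2$), so thresholds found for each type separately can be combined by taking their maximum. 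Third, I record the structural fact that in a separated family with separating point $c$ every pair $b$ satisfies $\Delta(b)\le\hht(c)$: if $\Delta(b)>\hht(c)$ then $b(0)$ and $b(1)$ agree below $\hht(c)$ and both are incomparable to $c$, forcing $c$ onto the same lexicographic side of $b(0)$ and of $b(1)$, contradicting $b(0)<_{\lex}c<_{\lex}b(1)$. Thus separatedness automatically yields uniformly bounded splitting.

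For $\text{(a)}\Rightarrow\text{(c)}$, fix a separated $A$ and a type $t$, let $\xi_0$ be as in (a), use the first observation to choose $b\in A$ with $\min\{\hht(b(0)),\hht(b(1))\}\ge\xi_0$, and apply (a) to get $a\in A\cap[T\res\xi_0]^2$ with $\stp(a,b)=t$; then $\max\{\hht(a(0)),\hht(a(1))\}<\xi_0\le\min\{\hht(b(0)),\hht(b(1))\}$.

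The substantive positive step is $\text{(c)}\Rightarrow\text{(a)}$. Fix separated $A$, a single type $t$, and suppose no threshold works; calling $b$ \emph{bad below $\xi$} when no $a\in A\cap[T\res\xi]^2$ has $\stp(a,b)=t$, this says that for every $\xi$ some $b\in A$ with $\min\{\hht(b(0)),\hht(b(1))\}\ge\xi$ is bad below $\xi$. I then build recursively an uncountable $B=\{b_\alpha:\alpha<\al\}\seb A$ so that, with $\zeta_\alpha:=\sup_{\beta<\alpha}\max\{\hht(b_\beta(0)),\hht(b_\beta(1))\}+1$, each $b_\alpha$ has $\min\{\hht(b_\alpha(0)),\hht(b_\alpha(1))\}\ge\zeta_\alpha$ and is bad below $\zeta_\alpha$ (possible at each stage by the failure hypothesis at $\xi=\zeta_\alpha$). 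Then $B$ is uncountable, pairwise disjoint (its members occupy strictly increasing height-blocks) and separated (as a subfamily of $A$), so (c) yields $b_\beta,b_\alpha\in B$ with $\max\{\hht(b_\beta(0)),\hht(b_\beta(1))\}<\min\{\hht(b_\alpha(0)),\hht(b_\alpha(1))\}$ and $\stp(b_\beta,b_\alpha)=t$. The height inequality forces $\beta<\alpha$, hence $\max\{\hht(b_\beta(0)),\hht(b_\beta(1))\}<\zeta_\alpha$, so $b_\beta\in A\cap[T\res\zeta_\alpha]^2$ is a match for $b_\alpha$ below $\zeta_\alpha$, contradicting that $b_\alpha$ is bad below $\zeta_\alpha$.

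It remains to bring in (b). The direction $\text{(a)}\Rightarrow\text{(b)}$ (and, identically, $\text{(c)}\Rightarrow\text{(b)}$) is easy: a normal $A$ has all splitting nodes below $\sup\Gamma$, of which there are only countably many, so pressing to a constant meet $b(0)\res\delta=u$, $b(1)\res\delta=v$ on an uncountable subfamily $A'$ makes $A'$ separated (by $v$, since every extension of $u$ is $<_{\lex}$ every extension of $v$); applying (a) to $A'$ and using that $A$ has equal within-pair heights produces $a,b\in A'\seb A$ with $\hht(a)<\hht(b)$ and $\stp(a,b)=t$. The hard part, and the main obstacle, is $\text{(b)}\Rightarrow\text{(a)}$, which must cross from the restricted class of normal families back to arbitrary separated ones. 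I would again begin with a separated $A$ failing (a) for a single type, refine (using the bounded-splitting fact) to constant meet data $u,v$, and then manufacture a normal family to which (b) applies. The device is height-equalizing truncation: replacing the higher coordinate of a pair by its restriction to the height of the lower coordinate preserves incomparability and the lexicographic type, so that a good pair among truncated pairs transfers back to a genuine good pair of the original members of $A$. The real difficulty is that a normal family must realize \emph{every} height in a final segment of $\al$ with uniformly bounded splitting, whereas the bad witnesses supplied by $\neg\text{(a)}$ sit only at sparsely distributed heights; reconciling these—filling out all heights above $\sup\Gamma$ using the fixed subtrees above $u$ and $v$ while guaranteeing that no spurious good pair is created, so that the normal family inherits the ``no low match'' property from the badness of $A$—is where essentially all the work of the lemma is concentrated.
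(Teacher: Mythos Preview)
Your arguments for (a)$\Leftrightarrow$(c) and for (a)$\Rightarrow$(b) are correct and essentially match the paper (the paper organizes the implications as the cycle (a)$\to$(b)$\to$(c)$\to$(a), with (c)$\to$(a) by exactly your contrapositive construction of bad witnesses).

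The gap is in your treatment of (b)$\Rightarrow$(a). You set it up as a contrapositive: assume some separated $A$ fails (a), extract a family of bad witnesses, and try to assemble from them a normal family that also fails (b). You correctly identify why this is awkward---a normal family must realize every height of a final segment, while the bad witnesses live at sparse heights, and filling in missing heights risks creating spurious good pairs---and you leave the step unresolved.

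This difficulty is entirely self-imposed. Since you already have (c)$\Rightarrow$(a), it suffices to prove (b)$\Rightarrow$(c), and that direction is purely \emph{positive}: given any separated $A$ and type $t$, you only need to exhibit one good pair with the height gap. The truncation device you describe does this directly, with no badness to preserve. First thin $A$ so that $\Delta(a)<\xi_0\le\min\{\hht(a(0)),\hht(a(1))\}$ for all remaining $a$; then recursively choose $a_\xi\in A$ for $\xi_0\le\xi<\al$ with $\min\hht(a_\xi)\ge\xi$ and $\max\hht(a_\xi)<\min\hht(a_\eta)$ for $\xi<\eta$; finally set $b_\xi:=\{a_\xi(0)\res\xi,\,a_\xi(1)\res\xi\}$. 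Now $B=\{b_\xi:\xi_0\le\xi<\al\}$ is normal by construction (it hits exactly the heights $[\xi_0,\al)$), so (b) hands you $\xi<\eta$ with $\stp(b_\xi,b_\eta)=t$; this lifts to $\stp(a_\xi,a_\eta)=t$, and the recursive height separation gives $\max\hht(a_\xi)<\min\hht(a_\eta)$. This is precisely the paper's route: apply (b), do not deny it.
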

\begin{proof}
  $(a) \rightarrow (b)$ It is clear since every normal
  family contains an uncountable subfamily which is separated.

  $(b) \rightarrow (c)$. Let $A \seb [S]^{2}$ be uncountable pairwise
  disjoint and separated. Since $A$ is separated and
  pairwise disjoint, then there is a $\xi_{0} < \omega_{1}$ such that
  $$A' := \{a \in A : a(0) \perp a(1), \Delta(a(0),a(1)) < \xi_{0},
  \hht(a(0)),\hht(a(1)) \ge \xi_{0}\}$$ is uncountable.
  We will recursively define a sequence
  $\{a_{\xi} : \xi_{0} \le \xi < \omega_{1}\} \seb A'$,
  satisfying the following conditions:
  \begin{enumerate}[(1)]\itemsep0em
    \item For every $\xi$, $\xi \le \min\{\hht(a_{\xi}(0)),\hht(a_{\xi}(1))\}$.
    \item For every $\xi < \eta$,
    $\max\{\hht(a_{\xi}(0)),\hht(a_{\xi}(1))\}
    < \min\{\hht(a_{\eta}(0)),\hht(a_{\eta}(1))\}$
  \end{enumerate}
  Suppose that $\{a_{\xi} : \xi_{0} \le \xi < \gamma\}$ has been defined
  for some $\xi_{0} \le \gamma < \omega_{1}$. Using the fact that $A$ is pairwise
  disjoint property, find an $a \in A'$ such that
  $\hht(a(0)),\hht(a(1)) > \sup(\{\gamma\} \cup
  \{\max\{\hht(a_{\xi}(0)),\hht_{\xi}(a(1))\} + 1 :
  \xi < \gamma\})$. Let $a_{\gamma} = a$.

  Now, for each $\xi_{0} \le \xi < \al$, let
  $b_\xi:=\{a_\xi(0)\res \xi,a_\xi(1)\res \xi\}$. Observe that
  $a_{\xi}(0) \res \xi \neq a_{\xi}(1) \res \xi$, since
  $\Delta(a(0),a(1))<\xi_0$ for any $a\in A.$

  Finally, let $B := \{b_{\xi} :\xi_0\leq \xi<\al\}$.
  It follows that $B$ is a normal sequence. Thus, by hypothesis, there are
  $\xi_{0} \le \xi< \eta$ such that $\stp(b_{\xi},b_{\eta}) = t$.
  Since for $i<2$,
  $b_{\xi}(i) \bot b_{\eta}(i)$, it follows that
  $\stp(a_{\xi},a_{\eta}) = t$. By construction we have
  that $\max\{\hht(a_{\xi}(0)),\hht(a_{\xi}(1))\} <
  \min\{\hht(a_{\eta}(0)),\hht(a_{\eta}(1))\}$ so we are done.

  $(c) \rightarrow (a)$. We prove the contrapositive.
  Suppose that $T$ is not weakly by entangled. Then there is
  an uncountable pairwise disjoint and separated $A \seb [S]$ such that
  for all $\alpha < \omega_{1}$, there is $b_{\alpha} \in A$ and
  type $t_{\alpha}$ such that
  $\hht(b_{\alpha}(0)),\hht(b_{\alpha}(1)) \ge \alpha$, and
  $\stp(a,b_{\alpha}) \neq t_{\alpha}$ for all $a \in A \cap [S \res \alpha]^{2}$.
  Then there is some type $t$ and
  uncountable $\Gamma \seb \omega_{1}$ such that
  $t_{\alpha} = t$ for all $\alpha \in \Gamma$. Using the pairwise disjoint
  property one easily finds an uncountable subset $\Gamma' \seb \Gamma$ such that
  for all $\alpha < \beta$ in $\Gamma'$,
  $\hht(b_{\alpha}(0)),\hht(b_{\alpha}(1)) < \beta$. This  implies
  that $\{b_{\alpha} : \alpha \in \Gamma'\}$ witnesses the failure of
  (c).
\end{proof}

\subsection{Forcing a weakly bi-entangled line}
As we mentioned before,  Krueger constructed a 2-entangled Suslin line.
This section is devoted to prove that Krueger's Suslin line  is actually weakly bi-entangled. In order to do so, we need to recall some notation and review part of his construction.

Let $\mathbb{Q}$  denote the rationals. For each $\alpha<\al$, let $\Q_\alpha:= \{(\alpha,q) : q \in \Q\}$. Define $(\alpha,q) <_\alpha (\alpha,r)$ if $q < r$ in $\Q$.  Let $\Q^* :=\bigcup \{\Q_\alpha : \alpha<\al \}$. Define a function $h$ on $\Q^*$ by $h(\alpha, q) := \alpha$ for all $(\alpha, q) \in  \Q^*$. The forcing poset $\P$ we will define introduces a Suslin tree $S$ such that for all $\alpha < \al, \Q_\alpha$ is equal to the level $\alpha$ of $S$.

\begin{definition} Let  $\P$ denote the forcing poset whose elements are finite trees $p$ whose nodes belong to $\Q^*$ and satisfy that $x <_p y$ implies $h(x) < h(y)$.  Let $q \leq p$ if  the underlying set of $p$ is a subset of the underlying set of $q$ and for all $x$ and $y$ in $p, x <_p y$ iff $x <_q y$.
\end{definition}

Let $\dt{S}$ be the canonical name for the partial
order given by $\bigcup_{p \in \dt{G}}<_{p}$, where $\dt{G}$ is the
canonical name for a $\P$-generic filter. Krueger showed that
$\dt{S}$ it is forced to be a well-pruned Hausdorff Suslin tree
with underlying set equal to $\Q^{*}$.
It is also forced that for each $s \in S$,
the set of its  immediate successors
is a subset of $\Q_{\alpha}$ (for some $\alpha$), and therefore $\dt{S}$ can
be lexicographically ordered using the orders $<_{\alpha}$. An
easy genericity argument shows that $(\dt{S},<_{\lex})$ is a densely ordered Suslin line.

We will need a few more facts from \cite{Krueger2020suslin} about
the forcing $\P$.

The next Lemma appears as \cite[Lemma 2.7]{Krueger2020suslin} and it will be used to prove compatibility of conditions in $\P.$
\begin{lemma}\label{Kruegercompatibility}
    Let $p \in \P$. Suppose that $\{(a_i,b_i) : i < n\}$ is a family of distinct pairs, where $0 < n < \omega$, such that for each $i < n, a_i$ is the immediate predecessor of $b_i$ in $p$. Let $x_0,\dots,x_{n-1}$ be distinct members of $\Q^*$ satisfying that for all $i < n, h(a_i) < h(x_i) < h(b_i)$. Then there exists $q \leq p$ with underlying set equal to the underlying set of $p$ together with $x_0,\dots,x_{n-1}$ such that for all $i < n, a_i <_q x_i <_q b_i$.
\end{lemma}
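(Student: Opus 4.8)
The plan is to construct $q$ explicitly. Let $P$ denote the underlying set of $p$, and put $Q := P \cup \{x_0,\dots,x_{n-1}\}$ (we may assume the $x_i$ are new, i.e.\ not already nodes of $p$, which is the case relevant to the applications). I define a relation $<_q$ on $Q$ that inserts each $x_i$ into the edge of $p$ joining $a_i$ to $b_i$: geometrically $x_i$ should become a node lying strictly between $a_i$ and $b_i$, so that its $q$-predecessors among the old nodes are exactly the $p$-predecessors up to and including $a_i$, while the subtree of $p$ sitting above $b_i$ becomes its set of $q$-successors. Concretely, I declare $u <_q v$ to hold in precisely the following situations: (i) $u,v \in P$ and $u <_p v$; (ii) $u = x_i$, $v \in P$, and $b_i \le_p v$; (iii) $u \in P$, $v = x_j$, and $u \le_p a_j$; and (iv) $u = x_i$, $v = x_j$ with $i \ne j$ and $b_i \le_p a_j$.

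With this definition in hand, I would verify the following, in order: that $<_q$ is height-monotone, i.e.\ $u <_q v$ implies $h(u) < h(v)$ --- this is immediate from $h(a_i) < h(x_i) < h(b_i)$ and the four clauses, and it yields irreflexivity and antisymmetry of $<_q$ for free (a loop or two-cycle would force $h(u) < h(u)$); that $<_q$ is transitive (a short case analysis on which of the three nodes are new); that $<_q$ restricted to $P$ is exactly $<_p$, so that $q \le p$; that $a_i <_q x_i <_q b_i$ for each $i$, which is immediate from clauses (iii) and (ii) since $a_i \le_p a_i$ and $b_i \le_p b_i$; and finally that $(Q,<_q)$ is a tree, i.e.\ the $<_q$-predecessors of every node form a chain. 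Since $Q$ is finite and its nodes lie in $\Q^*$, these checks together give $q \in \P$ with the desired properties.

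The single fact that drives every verification is that, because $a_i$ is the immediate predecessor of $b_i$ in $p$, the $p$-predecessors of $b_i$ are exactly $\{z \in P : z \le_p a_i\}$; in particular $z <_p b_i$ forces $z \le_p a_i$. Combined with the observation that distinct pairs have distinct $b_i$ (each node has a unique immediate predecessor), this reduces every comparison involving the new nodes to a comparison already settled in $p$. For instance, to see that two predecessors $x_i, x_j$ of a common node are $<_q$-comparable, note that $b_i, b_j$ both lie below that node, hence are $<_p$-comparable, and if $b_i <_p b_j$ then $b_i \le_p a_j$, giving $x_i <_q x_j$.

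The main obstacle is the simultaneity of the insertions: when several edges lie along one branch, the clauses force induced comparabilities between the fresh nodes (as in the example above), and one must confirm these are mutually consistent and never destroy the tree property. Height-monotonicity eliminates all potential cycles at once, and the immediate-predecessor fact shows that any two nodes sitting below a common node are $<_q$-comparable, which is precisely what the tree property demands. I expect the transitivity verification to be the most case-heavy step, but each of its cases collapses to the two structural facts above, so no genuinely new difficulty arises.
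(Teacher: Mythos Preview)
Your construction is correct and is the natural explicit insertion argument: place each $x_i$ on the edge from $a_i$ to $b_i$ and verify that the resulting relation is a finite tree extending $p$. The paper does not give its own proof of this lemma but simply quotes it from Krueger \cite{Krueger2020suslin}; your argument is precisely the standard one, so there is nothing substantive to compare.

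One small remark: your parenthetical ``we may assume the $x_i$ are new'' is not merely a convenience but a necessity --- if some $x_i$ already lies in $p$ then the conclusion $a_i <_q x_i <_q b_i$ would, by $q \le p$, force $a_i <_p x_i <_p b_i$, contradicting the immediate-predecessor hypothesis. So the lemma is only correct under that implicit assumption, which you rightly isolate.
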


The following Lemma appears as \cite[Lemma 2.11]{Krueger2020suslin} and it will be helpful to compare  nodes via the induced lexicographical ordering.

\begin{lemma}\label{kruegerlexorder}
    Assume that $p \in \P, x, y, z, a$, and $b$ are distinct members of $p$, and
    \begin{enumerate}[{\rm (i)}]
        \item $x<_p y<_p a$;
        \item  $x<_p z<_p b$;
        \item $h(y)=h(z)=h(x)+1$;
        \item $ y <_{h(x)+1} z$.
    \end{enumerate}

Then $p$ forces that $a <_{\dt{S}}b$.
\end{lemma}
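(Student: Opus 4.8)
The plan is to verify the claim directly inside a generic extension. I would fix a $\P$-generic filter $G$ with $p \in G$ and work in $V[G]$, writing $S = \dt{S}[G]$ equipped with its tree order and its lexicographic order (so that the target relation $a <_{\dt{S}} b$ is the lexicographic comparison). Since $p \in G$, every ordering relation recorded in $p$ survives into $S$; in particular $x <_S y <_S a$ and $x <_S z <_S b$. I would also use the already-established fact that $\P$ forces the $\alpha$-th level of $S$ to be exactly $\Q_\alpha$, so that the tree-height of a node $(\alpha,q)$ equals $h(\alpha,q)=\alpha$. Thus in $V[G]$ we have $\hht_S(x)=h(x)$, $\hht_S(y)=\hht_S(z)=h(x)+1$, and the relevant level order is $<_{h(x)+1}$.

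The core of the argument is to pin down the splitting level $\Delta(a,b)$. Since $x <_S a$ and $x <_S b$ with $\hht_S(x)=h(x)$, uniqueness of predecessors at a given height in a tree yields $a \res h(x) = x = b \res h(x)$. One level higher, $y <_S a$ together with $\hht_S(y)=h(x)+1$ forces $a\res(h(x)+1)=y$, and symmetrically $b\res(h(x)+1)=z$. As $x,y,z,a,b$ are distinct we have $y\neq z$, so the two branches first disagree at height $h(x)+1$; hence $\Delta(a,b)=h(x)+1$. The same computation shows $a\perp b$: were, say, $a <_S b$, then $b\res(h(x)+1)=a\res(h(x)+1)=y\neq z$, a contradiction, and similarly for $b<_S a$.

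With $\Delta(a,b)=h(x)+1$, $a\res\Delta(a,b)=y$, and $b\res\Delta(a,b)=z$ in hand, the conclusion is immediate from the definition of $<_{\lex}$: since $a\perp b$, we have $a<_{\lex}b$ iff $a\res\Delta(a,b) <_{h(x)+1} b\res\Delta(a,b)$, i.e.\ iff $y <_{h(x)+1} z$, which is exactly hypothesis (iv). Thus $a<_{\lex}b$ holds in every such $V[G]$, giving $p \forces a <_{\dt{S}} b$.

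I do not expect a genuine obstacle: the proof is essentially an unravelling of the definition of the lexicographic order once the heights and tree relations in $V[G]$ are fixed. The only point requiring a little care is that the facts I rely on are genuinely \emph{forced} by $p$ — namely that the relations listed in $p$ persist in $S$ and that $\hht_S(\alpha,q)=\alpha$ — but both follow directly from the basic properties of $\P$ recorded above, and no compatibility or genericity input (such as Lemma~\ref{Kruegercompatibility}) is needed.
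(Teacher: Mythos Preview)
Your argument is correct. The paper does not include its own proof of this lemma (it is quoted from \cite[Lemma~2.11]{Krueger2020suslin}), but your direct verification---computing $\Delta(a,b)=h(x)+1$ in the extension and reading off the lexicographic comparison from hypothesis~(iv)---is exactly the intended unravelling of the definition, and the facts you invoke (that $<_p$ persists in $S$ and that $\hht_S=h$) are indeed basic properties of $\P$ established in Krueger's construction.
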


Finally, we are in position to prove  that the $2$-entangled Suslin line introduced by Krueger is actually weakly bi-entangled.

\begin{theorem}\label{thm:weaklybientangled}
  The forcing poset $\P$  adds a Suslin line which is weakly bi-entangled.
\end{theorem}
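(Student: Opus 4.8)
The plan is to reduce the statement to the purely combinatorial criterion~(b) of Lemma~\ref{lem:wbe-eq} and to verify that criterion by a density argument in $\P$, using Krueger's insertion Lemmas~\ref{Kruegercompatibility} and~\ref{kruegerlexorder} to control the lexicographic order at a splitting point. Concretely, I would fix a $\P$-name $\dt{A}$ and a condition $p$ with $p \forces$ ``$\dt{A}$ is normal'', together with a type $t : 2 \to \{<,>\}$, and show that the set of conditions forcing ``there are $a,b \in \dt{A}$ with $\hht(a) < \hht(b)$ and $\stp(a,b) = t$'' is dense below $p$. Since this statement is existential and witnessed by elements of $\Q^{*}$, density of such conditions below $p$ forces it; as $\dt{A}$ and $t$ are arbitrary, this yields clause~(b) of Lemma~\ref{lem:wbe-eq} in the extension, hence that $(\dt{S},<_{\dt{S}},<_{\lex})$ is weakly bi-entangled.

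So I would fix an arbitrary $r \leq p$. Because $p$ forces that the heights occurring in $\dt{A}$ form an end-segment of $\al$, for uncountably many $\alpha<\al$ I may choose $r_\alpha \leq r$ and a pair $c_\alpha = \{c_\alpha(0),c_\alpha(1)\}$ with $\hht(c_\alpha(0))=\hht(c_\alpha(1))=\alpha$ and $r_\alpha \forces c_\alpha \in \dt{A}$; extending $r_\alpha$, I may assume $c_\alpha(0),c_\alpha(1)\in r_\alpha$. By normality the splitting levels $\Delta(c_\alpha)$ are bounded below $\min(\hht(\dt{A}))$, so a pigeonhole over the countably many configurations below a fixed countable level lets me thin to an uncountable set on which $\Delta(c_\alpha)=\Delta^{*}$ is constant and, after further extending and thinning, $c_\alpha(i)\res(\Delta^{*}+1)=e_i$ is constant with $e_0\neq e_1$. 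Writing $\gamma_0=\Delta^{*}+1$, I apply the $\Delta$-system lemma to the finite conditions $\{r_\alpha\}$ to get an uncountable set on which they form a $\Delta$-system with root $R \supseteq r$, are pairwise isomorphic over $R$ (sending $c_\alpha$ to $c_\beta$), and have private parts $r_\alpha\setminus R$ of height strictly above $\gamma_0$ and height-separated across distinct indices. For $i<2$ let $d_i^{*}$ be the top node of $R$ on the branch below $c_\alpha(i)$, at some constant level $\rho_i \geq \gamma_0$; since $\rho_i>\Delta^{*}$ we have $d_0^{*}\neq d_1^{*}$.

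The crux, and the step I expect to be the main obstacle, is to secure room to control the lexicographic order at the split. Fix $i<2$. Any node on the branch below $c_\alpha(i)$ at level $\rho_i+1$ must be a private (non-root) node of $r_\alpha$, since $d_i^{*}$ is the highest root node on that branch; as distinct indices have disjoint private parts, these nodes are distinct elements of $\Q_{\rho_i+1}$ for distinct $\alpha$. Because $\Q_{\rho_i+1}$ is countable, only countably many $\alpha$ can possess such a node, and discarding these for $i=0$ and $i=1$ leaves an uncountable set $I'$ on which no $r_\alpha$ has a node at level $\rho_i+1$ on the branch below $c_\alpha(i)$. This is exactly the freedom needed below, and it is what makes the finite-condition forcing $\P$ (where every element of $\Q_{\rho_i+1}$ is available) behave well.

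Finally I would pick $\alpha<\beta$ in $I'$ and set $q_0=r_\alpha\cup r_\beta$, a condition refining $r$ (disjoint height-separated private parts over the common root $R$ give a finite tree respecting heights) that forces $c_\alpha,c_\beta\in\dt{A}$. For each $i<2$ the meet of $c_\alpha(i)$ and $c_\beta(i)$ in $q_0$ is exactly $d_i^{*}$, and by the previous paragraph the immediate successor of $d_i^{*}$ on each branch lies strictly above level $\rho_i+1$. Using Lemma~\ref{Kruegercompatibility} I insert, for each $i$, two fresh nodes $y_i,z_i\in\Q_{\rho_i+1}$ at level $\rho_i+1$ with $d_i^{*}<_q y_i<_q c_\alpha(i)$ and $d_i^{*}<_q z_i<_q c_\beta(i)$, choosing $y_i<_{\rho_i+1}z_i$ when $t(i)={<}$ and $z_i<_{\rho_i+1}y_i$ when $t(i)={>}$ (possible since $\Q_{\rho_i+1}$ is a dense order and $q_0$ is finite), obtaining $q\leq q_0$. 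Since the two coordinates split at level $\gamma_0$ and $\alpha\neq\beta$, we get $c_\alpha(i)\perp c_\beta(i)$, and Lemma~\ref{kruegerlexorder} applied with $x=d_i^{*}$ forces the lexicographic order between $c_\alpha(i)$ and $c_\beta(i)$ prescribed by $t(i)$. Thus $q\forces \stp(c_\alpha,c_\beta)=t$ with $\hht(c_\alpha)=\alpha<\beta=\hht(c_\beta)$, so $q$ forces the desired existential. Everything outside the room-securing step is bookkeeping with the $\Delta$-system and Krueger's two lemmas.
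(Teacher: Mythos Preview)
Your argument is correct, and the endgame---amalgamating two conditions over a common lower part and inserting fresh nodes at level $\rho_i+1$ via Lemmas~\ref{Kruegercompatibility} and~\ref{kruegerlexorder} to prescribe the lexicographic order---is exactly what the paper does. The difference lies in how the two compatible conditions are produced. You reduce to clause~(b) of Lemma~\ref{lem:wbe-eq}, harvest uncountably many $(r_\alpha,c_\alpha)$ below a given $r$, and thin via a $\Delta$-system to obtain $r_\alpha,r_\beta$ with a common root $R$, isomorphic private parts, and free space at the splitting levels $\rho_i+1$. The paper instead verifies Definition~\ref{df:wk-bi-e} directly: it fixes a countable $M\prec H(\omega_2)$ containing the parameters, sets $\delta=M\cap\omega_1$ as the witnessing $\xi_0$, and given $q\le p$ and $b$ of height $\ge\delta$ with $q\forces b\in\dot A$, uses elementarity to reflect the pair $(q\cap M,\text{attachment pattern of }b^*(i)\text{ over }q\cap M)$ to find $r,a\in M$ with the same pattern; $r$ and $q$ then amalgamate just as in your final step. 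Your route is more combinatorial and closer in spirit to Krueger's original proof that $\dot S$ is $2$-entangled; the elementary-submodel route is shorter, gets the room-securing step for free (since $\delta$ is a limit ordinal, the level $h(d_i)+1$ above the top of $q_M$ on each branch is automatically unoccupied), and delivers the definitional $\xi_0$ directly rather than through the reduction of Lemma~\ref{lem:wbe-eq}. One small point worth tightening in your sketch: for the naive union $q_0=r_\alpha\cup r_\beta$ to be a condition, you need not just height-separation of the private parts but that all private heights lie above all root heights (so that no private node of $r_\alpha$ sits below a root node, which would force it to be comparable with the corresponding private node of $r_\beta$); this is easily arranged by discarding finitely many indices once height-separation is in place.
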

\begin{proof}
We will proof that $\P$ forces that  $(\dt{S},<_S,<_{\lex})$ is weakly bi-entangled. Fix a type $t:2\to \{<,>\}$.  Suppose $p\in \P, c\in\mathbb{Q}^*$ and $p$ forces that $\dt{A}$ is an  an uncountable pairwise disjoint  subfamily  of $[\dt{S}]^2$  separated by $c$.

Let $M\prec H(\omega_2)$ be a countable elementary submodel which contains $\P,c, \dt{A} $ and $p$ as elements. Let $\delta:=M\cap\al.$

 We claim that $\delta$ is as required. In order to see this, fix a further extension $q$ of $p$ and $b\in [S]^2$ with the minimum  height of its coordinates bigger than $\delta$ such that $q\forces ``b\in \dt{A}".$ If necessary,  by further extending $q$, we may assume without loss of generality that $b\in q$. Our goal is to find $s\leq q$ and $a\in [S\restriction\delta]^2$ such that $s\forces`` a\in\dt{A} \wedge \stp(a,b)=t"$. To do this,  let $q_M:=q\cap M,$ and let $b^*(i)$ the minimal node in $q\setminus \delta $ below or equal $b(i)$ for each $i<2$. Let $B$ be the set of all conditions $r$ such that:
\begin{enumerate}[{\rm (1)}]
    \item There exists a limit ordinal $\xi$ such that $r\restriction\xi=q_M$;
    \item There exists $a\in [S]^2$ such that $a\in r\setminus \xi$ and $r\forces`` a\in\dt{A}"$;
    \item If $a^*(i)$ denote the minimal node in $r\setminus \xi $ below or equal $a(i)$, then $\{x\in q_M: x<_ra^*(i)\}=\{x\in q_M: x<_rb^*(i)\}$ for $i<2$.
\end{enumerate}
Observe that $a^*(0)\ne a^*(1)$ since $q\forces``a(0)<_{\dt{S}} c<_{\dt{S}} a(1)"$ and both have height bigger than $\delta.$ Also note that $B$ is in $M$ as is definable from parameters in $M$. By elementarity, since $q\in B$, then there exists $r\in \P\cap M.$ Let $a\in[S]^2\cap M$ be a witness that $r\in B.$  By assumption, for each $i<2$,  $a^*(i) $ and $b^*(i)$ have the same immediate predecessor $d_i\in r\restriction\xi=q\restriction\delta=q_M.$ Since $\xi$ is a limit ordinal $h(d_i)<\xi\leq h(a^*(i)),h(b^*(i))$ for $i<2$. For each $i<2$, choose rational numbers  $q_{b,i}$ and $q_{a,i}$ such that none of them appear in $r\cup q$ and $q_{a,i} t(i) q_{b,i}$. Define $x_i:=(h(d_i)+1,q_{a,i})$ and $y_i:=(h(d_i)+1,q_{b,i})$ in $\mathbb{Q}_{h(d_i)+1}$ for all  $i<2$. Using Lemma \ref{Kruegercompatibility}, we can find an extension $s\leq q,r$ such that for all $i<2$, $d_i<_s x_i<_s a^*(i)$, $d_i<_s y_i<_s b^*(i)$, $h(y_i)=h(x_i)=h(d_i)+1$ and $x_i t(i)y_i$. It follows that $s$ forces that $a(i)$ and $b(i)$ are $<_{\dt{S}}$ incompatible. Applying Lemma   \ref{kruegerlexorder}, we obtain that $s$ forces that $\stp(a,b)=t$  and $a,b\in \dt{A}$ as required. This concludes the proof of the Theorem.
\end{proof}

\section{Preserving 2-entangled Suslin lines}\label{sec:ET-proper}
In this section we identify a class  of proper forcings that preserve that our   Suslin line $S$ is 2-entangled. Proposition \ref{prop:dense-w2e-2e} shows that for a dense Suslin line, being 2-entangled is equivalent to being weakly 2-entangled. Consequently, preserving being 2-entangled reduces to two distinct problems: ensuring $S$ remains Suslin and ensuring it remains weakly 2-entangled in the extension.

\subsection{Preserving Suslin trees }
In this subsection we will focus on the task of preserving a Suslin line. In order to do this, we will need the following notion.
\begin{definition}
  Let $S$ be a Suslin tree and $\P$ a forcing notion.
  $\P$ is said to be $S$-\emph{preserving} if
  $\forces_{\P} \qtext{$S$ is Suslin}$.
\end{definition}

The preservation of a Suslin tree under countable support iterations is a consequence of the following well-known Theorem of Miyamoto \cite{Miyamoto93}.

\begin{theorem}\label{miyamoto}
  Let $S$ be a Suslin tree in the ground model.
  Then any countable support iteration of $S$-preserving
  proper forcings is again $S$-preserving (and proper).
\end{theorem}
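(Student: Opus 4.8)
The properness of the iteration is immediate from Shelah's fundamental theorem that countable support iterations of proper forcings are proper, so the entire content is the preservation of Suslinity, and the plan is to run the standard two-step-plus-fusion induction on the length of the iteration, carrying along a generic-condition reformulation of $S$-preservation as the invariant.

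First I would record the working characterization. Fix a large regular $\theta$, and for a countable $M \prec H(\theta)$ with $S,\P \in M$ write $\delta = M \cap \al$. Call an $(M,\P)$-generic condition $q$ \emph{$S$-preserving over $M$} if for every node $u \in S_\delta$ and every maximal antichain name $\dt{A} \in M$ of $S$, $q \forces_\P$ that some $a \le_S u$ lies in $\dt{A}$; intuitively $u$ meets, below $\delta$, every $S$-antichain coded in $M$. A routine argument — using that for a well-pruned $\al$-tree Suslinity is equivalent to having no uncountable antichain, and that a maximal antichain of $S$ in $V^\P$ is uncountable exactly when, for club-many $M$, some node of level $\delta$ avoids it — shows that $\P$ is $S$-preserving if and only if club-many such $M$ admit, for every $p \in \P \cap M$ and every $u \in S_\delta$, a condition $q \le p$ that is $S$-preserving over $M$. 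This is the invariant I will preserve.

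The two-step case is the engine. Suppose $\P = \P_0 * \dt{\P_1}$ with $\P_0$ being $S$-preserving proper and $\forces_{\P_0}\qtext{$\dt{\P_1}$ is $S$-preserving proper}$. Given $M$, $p=(p_0,\dt{p}_1)\in \P\cap M$, and $u\in S_\delta$, I first apply the characterization in $V$ to obtain an $(M,\P_0)$-generic $q_0\le p_0$ that is $S$-preserving over $M$; working below $q_0$ the node $u$ stays $(M[\dt{G}_0],S)$-generic, and in $V^{\P_0}$ the model $M[\dt{G}_0]$ is again countable elementary with $M[\dt{G}_0]\cap\al=\delta$, so I apply the characterization inside $V^{\P_0}$ to $\dt{\P_1}$ and $u$ to get a name for an $(M[\dt{G}_0],\dt{\P_1})$-generic, $S$-preserving $\dt{q}_1\le \dt{p}_1$. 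The pair $q=(q_0,\dt{q}_1)$ is then $(M,\P)$-generic and $S$-preserving over $M$, the point being that $u$ surviving every $S$-antichain of $M$ in two stages is exactly $u$ surviving every such antichain in $V^\P$, since any antichain name of $M$ decomposes along the two steps.

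The genuinely technical part, and the step I expect to be the main obstacle, is the limit stage of a countable support iteration $\tup{\P_\alpha,\dt{Q}_\alpha : \alpha<\gamma}$ with $\gamma$ a limit. Given $M$, $p\in\P_\gamma\cap M$, and $u\in S_\delta$, I would fix an increasing sequence $\delta=\gamma_0<\gamma_1<\cdots$ cofinal in $\sup(M\cap\gamma)$ with each $\gamma_n\in M$, enumerate in order type $\omega$ all dense sets and all maximal-$S$-antichain names of $M$, and build a descending fusion $p=p_0\ge p_1\ge\cdots$ with $p_n\in M$ so that $p_{n+1}$ enters the $n$-th dense set, $p_{n+1}\res\gamma_n$ forces $p_{n+1}$ below it, and, crucially, the two-step lemma applied to $\P_{\gamma_n}*\dt{\P}_{[\gamma_n,\gamma_{n+1})}$ maintains at stage $n$ that $u$ is forced to meet the $n$-th $S$-antichain. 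The limit $q=\bigcup_n p_n$, with support contained in $M\cap\gamma$, is $(M,\P_\gamma)$-generic, and the difficulty is verifying that $q$ is still $S$-preserving over $M$: one must show that every maximal antichain of $S$ named in $M$ over the full extension $V^{\P_\gamma}$ is already captured at some bounded stage $\gamma_n$, so that the coordinatewise preservation accumulated in the fusion actually forces $u$ to meet it. This reflection of $S$-antichain names to bounded stages — using that an antichain of the Suslin tree is determined below $\delta$ by countably much information, all of which appears in $M$ and is decided cofinally along the $\gamma_n$ — together with the continuity of the generic-condition property at limits, is the heart of the argument and the place where the hypothesis that each iterand is $S$-preserving is fully used.
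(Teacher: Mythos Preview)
The paper does not give a proof of this theorem; it is quoted as a known result of Miyamoto \cite{Miyamoto93} and used as a black box, so there is no paper-proof to compare against. Your outline is the standard route to Miyamoto's theorem --- recast $S$-preservation as a strengthened generic-condition property over countable $M\prec H(\theta)$ and propagate it through the iteration via the two-step lemma plus fusion --- and that is indeed how the result is established in the literature.

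Two small execution issues in your limit sketch. First, ``$\delta=\gamma_0$'' is a slip: $\delta=M\cap\al$, while the $\gamma_n$ should range cofinally through $M\cap\gamma$. Second, the fusion is not literally $q=\bigcup_n p_n$ for a descending chain $p_n\in M$; one runs the usual two-sequence construction, with $(M,\P_{\gamma_n})$-generic $S$-preserving conditions $q_n\in\P_{\gamma_n}$ built outside $M$ alongside target conditions $p_n\in M\cap\P_\gamma$, and $q$ is the amalgamation of the $q_n$ (compare the paper's own limit argument in Theorem~\ref{main:preservationthm}). Finally, your worry about ``reflecting $S$-antichain names to bounded stages'' is not quite the mechanism: one does not reflect the antichain to a short initial segment, but rather at stage $n$ extends the tail name $p_n\res[\gamma_n,\gamma)$ inside $N[\dt G_{\gamma_n}]$ so as to force that $u$ meets the $n$-th antichain in the full extension --- exactly parallel to how Lemma~\ref{key:limititeration} handles a single $E_S$-task. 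None of this is a genuine gap; the strategy is correct.
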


\subsection{Preserving weakly bi-entangled}
In this subsection we will focus on the second task  meaning  preserving being weakly 2-entangled. To do this, we will introduce a new class of proper forcings. Before proceeding any further we need to introduce some definitions and fix some notation.

For this subsection fix $(T,<_{T},<_{\lex})$ a lexicographically
ordered Aronszajn tree. We will often
write $T$ instead of $(T,<_{T},<_{\lex})$. Since
we deal only with binary types, by a type we will always mean
some function $t : 2 \to \{<_{\lex},>_{\lex}\}$.

\begin{definition}
  Assume $T$ is weakly bi-entangled.
  We say that $\P$ is \emph{$T$-weakly bi-entangled preserving} if
  $\forces_{\P} \qtext{$T$ is weakly bi-entangled}$.
\end{definition}

Since the notion of weakly bi-entangled preserving can be hard to
work with, we will use a more technical condition, inspired by the work of Schlindwein\cite{Schlindwein94}, which we will show to be equivalent
to weakly bi-entangled preserving for proper forcings.

\begin{definition}
  Let $N$ be a countable set such that $\delta := N \cap \omega_{1}$
  is an ordinal. We say that $b \in [T_{\delta}]^{2}$ is
  \emph{$(N,E_{T})$-generic} if for any $A \seb [T]^{2}$ which is a
  member of $N$
  and any type $t$, if $b \in A$ then there is $a \in N \cap A$ such that
  $\stp(a,b) = t$.
\end{definition}

\begin{lemma}\label{lemma:nelementary-implies-bproper}
  Let $\theta$ be a large enough regular cardinal,
  and let $N$ be a countable elementary submodel of $H(\theta)$ such
  that $T \in N$. If $T$ is weakly bi-entangled,
  then every $b \in [T_{N \cap \omega_{1}}]^{2}$ is $(N,E_{T})$-generic.
\end{lemma}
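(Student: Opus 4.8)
The plan is to prove the contrapositive by a reflection argument: a single hypothetical ``bad'' pair at the critical level $\delta := N\cap\al$ is inflated into an uncountable pairwise disjoint separated family witnessing the failure of weakly bi-entangledness. First I would record the basic facts about $N$. Since $\delta = N\cap\al$, every $\alpha<\delta$ lies in $N$, so each level $T_\alpha$ ($\alpha<\delta$) is a countable element of $N$ and hence $T_\alpha\seb N$; thus $T\res\delta\seb N$ and $N\cap T = T\res\delta$. Consequently, for $a\in[T]^2$ we have $a\in N$ iff both coordinates have height $<\delta$, i.e. $N\cap A = A\cap[T\res\delta]^2$ for any $A\in N$. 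Now fix $A\in N$, a type $t$, and $b\in A\cap[T_\delta]^2$; we must produce $a\in N\cap A$ with $\stp(a,b)=t$. Note first that $A$ is uncountable: otherwise $A\in N$ would be countable and hence $A\seb N$, forcing $b\in N$, contrary to $\hht(b(0))=\hht(b(1))=\delta$.

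Next I would reduce $A$ to a well-structured subfamily that still contains $b$. Put $\rho:=\Delta(b(0),b(1))<\delta$ and $u_i:=b(i)\res\rho\in T_\rho\seb N$; by the increasing-enumeration convention $u_0\triangleleft_{\rho} u_1$. Define
\[
A^*:=\{a\in A: \hht(a(0))=\hht(a(1)),\ \Delta(a(0),a(1))=\rho,\ a(0)\res\rho=u_0,\ a(1)\res\rho=u_1\}.
\]
Then $A^*\in N$ (being definable from the parameters $A,\rho,u_0,u_1\in N$) and $b\in A^*$; moreover $A^*$ is uncountable, by the same argument as above ($b\in A^*\in N$ but $b\notin N$). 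Each $a\in A^*$ extends $u_0,u_1$ above the common splitting level $\rho$, whence $a(0)<_{\lex}u_1<_{\lex}a(1)$; thus $A^*$ is separated by the single node $u_1\in N$, and every element of $A^*$ has its two coordinates at a common height. Since $A^*\seb A$, it suffices to find a witness $a\in N\cap A^*$.

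Then I would run the reflection. Suppose toward a contradiction that no $a\in N\cap A^* = A^*\cap[T\res\delta]^2$ satisfies $\stp(a,b)=t$. Call a pair $b'\in A^*$ of common height $\gamma$ \emph{bad} if $\stp(a,b')\neq t$ for every $a\in A^*\cap[T\res\gamma]^2$, and let $C$ be the set of heights $\gamma<\al$ carrying a bad pair. By assumption $\delta\in C$ (witnessed by $b$), and $C\in N$ since it is definable from $A^*$ and $t$. If $C$ were bounded, then $\sup C$ would be a countable ordinal lying in $N$, hence $\sup C<\delta$, contradicting $\delta\in C$; therefore $C$ is unbounded in $\al$.

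Finally, working in $N$, I would choose bad pairs $b'_\gamma\in A^*$ of height $\gamma$ for the uncountably many $\gamma\in C$ and set $A^\flat:=\{b'_\gamma:\gamma\in C\}\in N$. The coordinates of distinct $b'_\gamma$ lie on distinct levels, so $A^\flat$ is pairwise disjoint; it is separated by $u_1$ and uncountable because $A^\flat\seb A^*$ and $C$ is unbounded. Given any $\xi_0<\al$, choosing $\gamma\in C$ with $\gamma\geq\xi_0$ yields $b'_\gamma\in A^\flat$ of height $\geq\xi_0$ with no $a\in A^\flat\cap[T\res\xi_0]^2$ realizing $\stp(a,b'_\gamma)=t$ (as $A^\flat\seb A^*$ and $[T\res\xi_0]^2\seb[T\res\gamma]^2$). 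Hence $A^\flat$ is an uncountable pairwise disjoint separated family admitting no uniform bound as in Definition~\ref{df:wk-bi-e}, contradicting that $T$ is weakly bi-entangled. I expect the main obstacle to be the reduction step: one must isolate a subfamily of $A$ that still contains the external pair $b$, is definable over $N$ (so that the set $C$ belongs to $N$), and is simultaneously separated and composed of equal-height pairs—the latter two properties being exactly what make the reflected family $A^\flat$ pairwise disjoint and separated for free, so that no delicate disjointification around $b$ is ever needed.
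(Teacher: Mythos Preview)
Your proof is correct and shares the paper's opening move: replace $A$ by a separated subfamily in $N$ that still contains $b$ (your $A^*$ plays the role of the paper's $B=\{a\in A:\hht(a(0))=\hht(a(1)),\ a(0)<_{\lex}x<_{\lex}a(1)\}$ for a separator $x\in N$). From there the two arguments diverge. The paper applies Definition~\ref{df:wk-bi-e} directly to $B$, pulls the witnessing ordinal $\xi_0$ into $N$ by elementarity, and reads off the desired $a\in B\cap[T\res\xi_0]^2\subseteq N$. You instead run the contrapositive: assuming no witness below $\delta$, you reflect to get an unbounded set $C\in N$ of ``bad'' heights and assemble from it an explicit pairwise disjoint, separated family $A^\flat$ contradicting weak bi-entangledness. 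Your route is longer, but it buys something real: $A^\flat$ is pairwise disjoint by construction (one pair per height), so the hypotheses of Definition~\ref{df:wk-bi-e} are visibly met, whereas neither $B$ nor your $A^*$ is pairwise disjoint in general---the paper's direct application glosses over this point. One cosmetic fix: if $\{u_0,u_1\}\in A$ then it sits in $A^*$ at height $\rho$ and is not \emph{strictly} separated by $u_1$; simply require height $>\rho$ in the definition of $A^*$ (you lose at most one element and still have $b\in A^*$).
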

\begin{proof}
  Let $\theta$, $T$, $N$, $A$  $b$ and $t$ be as in the statement of the Lemma and suppose that $b\in A$. Let
  $\delta := N \cap \omega_{1}$. Our goal is to find $a\in A\cap N$ such that $\stp(a,b)=t$. To do this, first observe that since
  $\hht(b(0)) = \hht(b(1))$ and $b(0) <_{\lex} b(1)$, then there
  is an $x \in N$ such that $b(0) <_{\lex} x <_{\lex} b(1)$ (for
  example, we may take  $x:= b(1) \restriction \xi$ for any $\Delta(b(0),b(1)) < \xi <
  \delta$ ).
  Let $B := \{a \in A : \hht(a(0)) = \hht(a(1)),
  a(0) <_{\lex} x <_{\lex} a(1)\}$. Notice that $B\in  N$ since it is definable from parameters in $N$. Also observe that since $b\in B \setminus N$, it follows that  $B$ is uncountable and
  separated. By definition of weakly bi-entangled
  (Definition~\ref{df:wk-bi-e}), there is a $\xi_0$ witnessing this property, by elementarity we may choose  $\xi_0$ to be a member of $N$. Hence,
  there exists $a \in B \cap [S\res \xi_0]$ such that $\stp(a,b) = t$, since
  $B \seb A$ and $\xi_0<\delta$, we conclude that $b$ is $(N,E_{T})$-generic.
\end{proof}

\begin{definition}
  Let $\P$ be a forcing notion, and $\theta$ a regular
  cardinal such that $T,\P \in H(\theta)$. Let $N$ be a countable
  elementary submodel of $H(\theta)$ with $\P \in N$.
  A condition $q \in \P$ is called \emph{$(N,\P,E_{T})$-generic} if
  it is $(N,\P)$-generic and
  whenever $\dt{A}$ is a $\P$-name in $N$,
  $b \in [T_{N\cap \omega_{1}}]^{2}$ is $(N,E_{T})$-generic,
  and there is an $r \leq q$  such that $r \forces`` b \in \dt{A} \seb [T]^{2}"$,
  then there exists $a \in [T]^{2} \cap N$ and $s \le r$
  such that $\stp(a,b) = t$ and $s \forces ``a \in \dt{A}"$.
\end{definition}

\begin{definition}
  We say that $\P$ is \emph{$E_{T}$-proper} if for all large enough regular cardinals
  $\theta$, there are club many countable  $N$ in   $[H(\theta)]^\omega$ such that $N$ is an elementary submodel of $H(\theta)$ with
  $\P,T \in N$ and for all $p \in \P \cap N$ there is a $q\leq p$
which is  $(N,\P,E_{T})$-generic.
\end{definition}

\begin{lemma}\label{lem:E-proper-aronszajn}
  Let $T$ be weakly bi-entangled and let $\P$ be $E_{T}$-proper forcing.
  Then $\forces_{\P} \text{``T is Aronszajn''}$.
\end{lemma}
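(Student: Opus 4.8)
The plan is to prove the contrapositive-style statement directly: assuming $T$ is weakly bi-entangled in the ground model and $\P$ is $E_T$-proper, I must show that $\P$ forces $T$ to have no uncountable branch, i.e.\ $T$ remains Aronszajn. Since $T$ is already an $\omega_1$-tree in the ground model (being a lexicographically ordered Aronszajn tree) and forcing does not add new levels or nodes to a fixed ground-model tree of height $\omega_1$, the only way $T$ could fail to be Aronszajn in the extension is by acquiring a new cofinal branch. So the real content is: an $E_T$-proper forcing adds no uncountable branch through $T$.

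First I would set up the standard genericity argument. Suppose toward a contradiction that some $p \in \P$ forces that $\dt{b}$ is a cofinal branch of $T$. Fix a large enough regular $\theta$ and, using $E_T$-properness, pick a countable elementary submodel $N \prec H(\theta)$ with $\P, T, p, \dt{b} \in N$ such that every condition in $\P \cap N$ has an $(N,\P,E_T)$-generic extension; let $\delta := N \cap \omega_1$. Choose $q \le p$ that is $(N,\P,E_T)$-generic. The key geometric fact about branches is that a cofinal branch meets the level $T_\delta$ in exactly one node, say it is forced that $\dt{b} \cap T_\delta = \{z\}$ for some $z \in T_\delta$. The strategy is to exploit the entangledness-type genericity clause in the definition of $(N,\P,E_T)$-generic to derive a contradiction, since a branch is, in a strong sense, \emph{not} entangled with itself: along a single branch one cannot realize an incomparability type.

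The crux is to build an appropriate $\P$-name $\dt{A} \in N$ for a family in $[T]^2$ together with an $(N,E_T)$-generic pair $b$ so that membership $b \in \dt{A}$ is forced by an extension of $q$, yet no ground-model pair $a \in N$ can realize a type $t$ with $\stp(a,b) = t$ over any extension. I would arrange $\dt{A}$ to record pairs built from two branches (or from the branch $\dt{b}$ paired with a fixed companion), so that the entry determined at level $\delta$ is the pair $b = \{z_0, z_1\}$ in $T_\delta$. Because both coordinates lie on cofinal branches that $q$ controls, any $a \in N$ that $s \le r$ forces into $\dt{A}$ must have coordinates comparable in $T$ to the corresponding coordinate of $b$ — but $\stp(a,b) = t$ demands $a(i) \perp b(i)$ for each $i$, which is impossible once $s$ forces $a$ and $b$ to lie on the same branch. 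This contradicts the conclusion of $(N,\P,E_T)$-genericity, which guarantees such an $a$ and $s$ exist. To make $b$ genuinely $(N,E_T)$-generic I would invoke Lemma~\ref{lemma:nelementary-implies-bproper}: since $T$ is weakly bi-entangled and $T \in N$, every $b \in [T_\delta]^2$ is automatically $(N,E_T)$-generic, so this hypothesis comes for free.

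The main obstacle I anticipate is the bookkeeping that makes the branch behave as a non-entangled object inside the $E_T$-genericity machinery: I must define $\dt{A}$ so that it is uncountable and separated (the hypotheses implicit in how $(N,\P,E_T)$-genericity interacts with $\dt{A}$), while forcing that its level-$\delta$ entry is pinned to the generic branch in a way that rules out incomparability with any ground-model candidate. Establishing that the coordinates of the branch at level $\delta$ force comparability with — rather than incomparability from — every element of $N \cap T$ is the delicate point, and it hinges on the fact that a cofinal branch is determined below $\delta$ by a cofinal sequence of nodes all lying in $N$. Once this is in place, the $\stp$ requirement $a(i)\perp b(i)$ collides with branch-comparability and yields the contradiction, completing the proof that $T$ remains Aronszajn.
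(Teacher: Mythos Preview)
Your approach is essentially the paper's: pair the branch with a companion family to form a name $\dt{A}$, invoke $(N,\P,E_T)$-genericity to produce $a \in N$ with $\stp(a,b)=t$, and derive a contradiction from the fact that coordinates lying on the branch are $<_T$-comparable while $\stp$ demands incomparability. The paper makes the companion concrete by choosing a node $t$ off the branch whose cone is uncountable and pairing each $b_\xi$ with some $a_\xi \ge_T t$ of the same height; note also that the definition of $(N,\P,E_T)$-generic imposes no uncountability or separation hypothesis on $\dt{A}$, so the obstacle you anticipate does not arise.
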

\begin{proof}
  Let $G$ be any generic filter for $\P$, and suppose
  towards a contradiction that $b := \{b_{\xi} : \xi < \al\}$
  is a cofinal branch through $T$, such that for each $\xi$,
  $b_{\xi} \in T_{\xi}$. Choose $\alpha$ and $t \in T_{\alpha}$
  such that $\{s \in T : t \le_{T} s\}$ is uncountable
  and disjoint from $b$. For each $\alpha \le \xi < \al$,
  pick $a_{\xi}$ to be any element above $t$ of height $\xi$.
  It follows that $A := \{\{a_{\xi},b_{\xi}\} : \alpha \le \xi < \al\}$ is a pairwise disjoint  uncountable subset of $[T]^{2}$ which is separated. By symmetry, we may also assume
  that $a_{\xi} <_{\lex} b_{\xi}$ for all $\alpha \le \xi < \al$. In other words,
  if $a \in A$, then $a(1) \in b$.
  Let $\dt{A}$ and $\dt{b}$ be $\P$-names for $A$ and $b$, respectively
  and let $p$ be a condition in $G$ such that $p$ forces all the aforementioned properties about $A$ and $b$.
  Let $\theta$ be a large enough regular cardinal and let $N$ be
  a countable elementary submodel of $H(\theta)$ which contains
  $\dt{A},\dt{b},p,\P,T$ and $ \alpha$. Let $q$ be an $(N,\P,E_T)$-generic
  condition below $p$.  Find   $r\leq q$ and $b \in [T_{N \cap \omega_{1}}]^{2}$ such that
  $r \forces`` b \in \dt{A}"$. By definition of $(N,\P,E_T)$-generic, there exists $a \in [T]^{2} \cap N$
  and $s \le r$ such that $s \forces a \in \dt{A} \land \stp(a,b) = (>,>)$.
  In particular, $s \forces`` b(0) \bot b(1)"$ which is a contradiction
  since $s \le p$ and $p$ forces that $b$ is a branch.
\end{proof}

\begin{lemma}\label{lem:E-pres_E-proper}
  Let $T$ be weakly bi-entangled. Then $\P$ is $E_{T}$-proper
  iff $\P$ is $T$-weakly bi-entangled preserving and
  proper.
\end{lemma}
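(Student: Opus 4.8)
The plan is to prove the two implications separately, after recording the trivial observation that $E_{T}$-properness implies properness: every $(N,\P,E_{T})$-generic condition is by definition $(N,\P)$-generic, so the defining club already witnesses properness. For both directions I work with a large regular $\theta$ and countable $N \prec H(\theta)$ with $\P,T \in N$, writing $\delta := N \cap \al$.

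For the forward direction ($E_{T}$-proper $\Rightarrow$ preserving), I will use that $E_{T}$-proper forcing keeps $T$ Aronszajn (Lemma~\ref{lem:E-proper-aronszajn}), which lets me verify weak bi-entangledness in $V[G]$ through the equivalent condition (b) of Lemma~\ref{lem:wbe-eq}, concerning normal families. So I fix a name $\dt A$ and a condition $p$ forcing that $\dt A$ is normal, together with a type $t$, and aim to show that the conditions forcing the existence of $a,b \in \dt A$ with $\hht(a) < \hht(b)$ and $\stp(a,b) = t$ are dense below $p$. Given $p_{0} \le p$, I would first pass to $p_{1} \le p_{0}$ deciding the countable ordinals $\mu = \min(\hht(\dt A))$ and $\nu = \sup\{\Delta(b) : b \in \dt A\}$, then choose $N$ in the $E_{T}$-properness club containing $\P,T,\dt A,p_{1},\mu,\nu$ so that $\mu,\nu < \delta$, and take $q \le p_{1}$ that is $(N,\P,E_{T})$-generic. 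Since $q$ forces $\dt A$ normal with minimum height $\mu < \delta$ and every level in $[\mu,\al)$ occupied, there is $r \le q$ and a pair $b \in [T_{\delta}]^{2}$ (whose coordinates are distinct because $\Delta(b) \le \nu < \delta$) with $r \forces b \in \dt A$. Lemma~\ref{lemma:nelementary-implies-bproper} makes $b$ be $(N,E_{T})$-generic, so the defining property of $q$ produces $a \in [T]^{2} \cap N$ and $s \le r$ with $\stp(a,b) = t$ and $s \forces a \in \dt A$; as $a \in N$ has height below $\delta = \hht(b)$, this $s$ lies in the desired dense set.

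For the reverse direction ($\P$ proper and preserving $\Rightarrow$ $E_{T}$-proper), I start from the properness club and a condition $q \le p$ that is merely $(N,\P)$-generic, and the task is to show $q$ is automatically $(N,\P,E_{T})$-generic. So I suppose $\dt A \in N$, that $b \in [T_{\delta}]^{2}$ is $(N,E_{T})$-generic, that $r \le q$ forces $b \in \dt A \seb [T]^{2}$, and fix a type $t$. Passing to a generic $G \ni r$ and imitating the proof of Lemma~\ref{lemma:nelementary-implies-bproper}, I choose $x \in N$ with $b(0) <_{\lex} x <_{\lex} b(1)$ and set $B := \{a \in \dt A[G] : \hht(a(0)) = \hht(a(1)),\ a(0) <_{\lex} x <_{\lex} a(1)\}$, an element of $N[G]$. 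Since $q$ is $(N,\P)$-generic we have $N[G] \cap \al = \delta$, so $b \in B \setminus N[G]$ forces $B$ to be uncountable, and $x$ separates it. The preservation hypothesis now makes $T$ weakly bi-entangled in $V[G]$, so $B$ admits a witness $\xi_{0}$ which, by elementarity of $N[G]$, can be taken in $N[G]$, whence $\xi_{0} < \delta$. Applied to $b$ (of height $\delta \ge \xi_{0}$) this yields $a \in B \cap [T\res\xi_{0}]^{2}$ with $\stp(a,b) = t$; since $\xi_{0} < \delta$ forces $\xi_{0} \in N$ and $T_{<\xi_{0}}$ is then a countable ground-model set contained in $N$, I obtain $a \in [T]^{2} \cap N$ with $a \in \dt A[G]$. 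Picking $s \in G$, $s \le r$, forcing $a \in \dt A$ gives ground-model witnesses $a,s$, establishing $(N,\P,E_{T})$-genericity of $q$.

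The hard part will be the reverse direction, precisely because the name $\dt A$ in the definition of $(N,\P,E_{T})$-generic is not assumed to name an uncountable separated family, so the preservation hypothesis cannot be applied to $\dt A$ directly. The key maneuver is to shrink $\dt A[G]$ in the extension to the separated, provably uncountable family $B$ built around the $(N,E_{T})$-generic pair $b$ (mirroring Lemma~\ref{lemma:nelementary-implies-bproper}), and to use $(N,\P)$-genericity twice: once to force $B$ uncountable via $N[G] \cap \al = \delta$, and once to reflect the witness $\xi_{0}$ below $\delta$ so that the resulting $a$ is a genuine ground-model element of $N$. The delicate point to check carefully is that these reflection steps land inside $N$ rather than merely inside $N[G]$.
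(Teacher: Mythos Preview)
Your proof is correct and matches the paper's approach in both directions; the only structural difference is that in the reverse implication the paper directly invokes Lemma~\ref{lemma:nelementary-implies-bproper} in $V[G]$ (concluding that $b$ is $(N[G],E_T)$-generic, and then pulling the resulting $a\in N[G]$ back into $N$ via $N[G]\cap V=N$), whereas you unfold that lemma's argument by hand inside $V[G]$. This is fine, though note that when you apply Definition~\ref{df:wk-bi-e} to your family $B$ you are tacitly assuming it is pairwise disjoint---the same small elision the paper makes in its own proof of Lemma~\ref{lemma:nelementary-implies-bproper}, and harmless since $B$ consists of equal-height pairs and can be thinned to one pair per level.
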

\begin{proof}
  Suppose that $\P$ is proper and $T$-weakly bi-entangled preserving.
  Let $\theta$ be a large enough regular cardinal and let $N$ be a countable elementary submodel of $H(\theta)$ which contains $T$ and $\P$ as elements. Fix $p \in N \cap P$ and let $q \le p$
  be $(N,\P)$-generic. We claim that this $q$ is
  $(N,\P,E_{T})$-generic. In order to see this, fix $b \in [T_{N \cap \omega_{1}}]^{2}$, $\dt{A}\in N$ a $\P$-name for a subset of $[T]^2$ and a type $t$.  Suppose $r \le q$ is such that
  $r \forces`` b \in \dt{A}"$.   Let $G\subseteq \P$ be any generic filter
  with $r \in G$. Since $\P$ preserves that $T$ is weakly bi-entangled, $b\in[T_{N[G]\cap\al}]^2\cap\dt{A}[G]$ (since $N[G]\cap \al=N\cap\al)$ and $N[G] \prec H(\theta)^{V[G]}$, then it follows, from Lemma \ref{lemma:nelementary-implies-bproper}, that there is an $a\in \dt{A}\cap N[G]$ such that $\stp(a,b)=t.$ Since $N[G]$ is a forcing extension of $N$ there is an $r'\in G\cap M$ such that $r'\forces ``a\in \dt{A}$. Since $r,r'\in G $, then there is an $s\leq r,r'$. Thus, $s$ is as required.

  (ii) Suppose that $\P$ is $E_{T}$-proper. Thus in particular $\P$ is
  proper, so it is enough to prove that $\P$ is weakly bi-entangled
  preserving. Let
  $\dt{B}$ be a $\P$-name, and $p \in \P$ a condition that forces that
  $\dt{B} \seb [S]^{2}$ is a normal sequence. By Lemma~\ref{lem:wbe-eq}
  it is enough to
  show that the set of conditions
  $s$ such that $s \forces a,b \in \dt{B}$ for
  some $a$ and $b$ satisfying $\hht(a) < \hht(b)$ and
  $\stp(a,b) = t$, is dense below $p$.

  To see this, let $p'$ be any condition extending $ p$. Let $\theta$ be a large enough regular cardinal and  let $N$ be a countable elementary
  submodel of $H(\theta)$ which contains $T,\P,p',\dt{B}$ as elements.
  Let $q \le p'$ be $(N,\P,E_{T})$-generic. Let $\delta := N \cap
  \omega_{1}$. Let $G$ be
  generic with $q \in G$. Recall that
  $N[G] \cap \omega_{1} = \delta$. Since $\dt{B}[G] \in N[G] \prec
  H(\theta)^{V[G]}$, and $\dt{B}$ is forced by $p$ to
  have an end-segment of levels, there is some
  $b \in \dt{B}[G] \cap [T_{\delta}]^{2}$. Fix
  $r \le q$ which forces this. Applying the
  definition of $(N,\P,E_{T})$-genericity, this tell us
  that for some $a \in [T]^{2} \cap N$ and $s \le r$, 
  $\stp(a,b) = t$ and $s \forces`` \dt{a} \in B"$. Since we already know that
  $r \forces`` b \in \dt{B}"$, and clearly $\hht(a) < \hht(b)$,
  this concludes the proof of the Lemma.
\end{proof}

From Proposition~\ref{prop:dense-w2e-2e} and Lemmas
\ref{lem:E-proper-aronszajn} and \ref{lem:E-pres_E-proper}, we obtain the
following.

\begin{corollary}\label{cor:E-proper-wbe}
  Suppose $S$ is a lexicographically ordered Suslin tree
  that is weakly bi-entangled, and such that
  $(S,<_{\lex})$ is dense. Let $\P$ be
  an $E_{S}$-proper forcing which  is also $S$-preserving.  Then
  $\forces_{\P}$ ``$(S,<_T,<_{\lex})$ is weakly bi-entangled
  and $(S,<_{\lex})$ is $2$-entangled''.
\end{corollary}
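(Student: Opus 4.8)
The plan is to run all three cited results inside an arbitrary generic extension $V[G]$, using that each is a theorem of $\ZFC$ and hence holds in $V[G]$. Fix a $\P$-generic filter $G$. First I would invoke Lemma~\ref{lem:E-pres_E-proper}: since $S$ is weakly bi-entangled in $V$ and $\P$ is $E_S$-proper, that lemma yields that $\P$ is proper and $S$-weakly bi-entangled preserving, i.e.\ $\forces_\P$ ``$(S,<_S,<_{\lex})$ is weakly bi-entangled''. So in $V[G]$ the tree $S$ is weakly bi-entangled, which takes care of the first half of the forced conclusion.

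Next I would secure that $S$ is still a Suslin tree in $V[G]$. The hypothesis that $\P$ is $S$-preserving gives exactly $\forces_\P$ ``$S$ is Suslin'', while Lemma~\ref{lem:E-proper-aronszajn} supplies the Aronszajn part directly from $E_S$-properness; in either case $S$ is Suslin in $V[G]$. Moreover, the underlying set of $S$, the tree order $<_S$, and the level orders used to define $<_{\lex}$ are all ground-model objects unchanged by forcing, so $(S,<_S,<_{\lex})$ is literally the same structure in $V$ and in $V[G]$. Density of $(S,<_{\lex})$ is the first-order sentence $\forall a\,\forall b\,(a<_{\lex}b\to\exists c\,(a<_{\lex}c<_{\lex}b))$ in this structure, hence is absolute between $V$ and $V[G]$; thus $(S,<_{\lex})$ remains dense in the extension.

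Finally, I would combine these facts. By the remark following Definition~\ref{df:wk-bi-e}, weakly bi-entangled implies weakly $2$-entangled, so $\forces_\P$ ``$(S,<_{\lex})$ is weakly $2$-entangled''. Working inside $V[G]$, the structure $(S,<_S,<_{\lex})$ is then a dense lexicographically ordered Suslin tree which is weakly $2$-entangled, so Proposition~\ref{prop:dense-w2e-2e}, applied in $V[G]$, gives that $(S,<_{\lex})$ is $2$-entangled there. Together with the weakly bi-entangledness obtained in the first step, this is precisely the statement forced by $\P$.

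Since the substance of the argument is carried entirely by the three previously established results, I do not expect a genuine obstacle here; the only points requiring care are checking that every hypothesis of Proposition~\ref{prop:dense-w2e-2e} really transfers to $V[G]$ — in particular the absoluteness of density, and the fact that ``$S$ is weakly bi-entangled'' is a meaningful (indeed well-posed) assertion only once $S$ is known to remain Aronszajn, which is exactly why Lemma~\ref{lem:E-proper-aronszajn} is cited alongside the $S$-preservation hypothesis.
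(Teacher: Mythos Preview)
Your proposal is correct and matches the paper's approach exactly: the paper does not give a written proof but simply states that the corollary follows from Proposition~\ref{prop:dense-w2e-2e} and Lemmas~\ref{lem:E-proper-aronszajn} and~\ref{lem:E-pres_E-proper}, and your argument is precisely the intended unpacking of that citation. Your remarks on the absoluteness of density and on why Lemma~\ref{lem:E-proper-aronszajn} is needed to make the weakly bi-entangled assertion well-posed in $V[G]$ are apt and fill in the only nonroutine points.
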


Finally, we will also need the following.

\begin{lemma}\label{lem:s-closed-ET-proper}
  Let $\P$ be an $\omega_{1}$-closed forcing notion.
  Then $\P$ is $E_{T}$-proper, and if $T$ is Suslin
  then $\P$ is also $S$-preserving.
\end{lemma}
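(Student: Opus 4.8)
The plan is to handle the two assertions separately, reducing the first to a preservation statement. Since every $\omega_1$-closed poset is proper and (by the standing assumption of this subsection) $T$ is weakly bi-entangled, Lemma~\ref{lem:E-pres_E-proper} shows that $\P$ is $E_T$-proper as soon as it is $T$-weakly bi-entangled preserving. So it suffices to prove, for an arbitrary $\omega_1$-closed $\P$, the two preservation facts: (i) $\P$ forces $T$ to remain weakly bi-entangled, and (ii) if $T$ is Suslin then $\P$ forces $T$ to remain Suslin. I would obtain both by the same device, exploiting the one consequence of $\omega_1$-closure that reaches into the uncountable: a descending sequence of conditions of \emph{countable} length always has a lower bound. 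Hence one can construct a descending chain $\langle q_\xi : \xi<\al\rangle$ of conditions, taking a lower bound at each (countable) limit stage, while arranging that $q_{\xi+1}$ decides successively more of a fixed name. Because the chain is descending, a single condition low in it forces membership of all objects chosen before it, and any purely tree-theoretic relation that is thereby forced is absolute back to $V$.

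I would treat the Suslin case first, as the cleaner model of the argument. Since $\omega_1$-closed forcing adds no new $\omega$-sequences, it preserves $\al$ and leaves the levels of $T$ unchanged, so $T$ remains a well-pruned $\al$-tree; by the criterion recalled in the preliminaries it is enough to show that no new uncountable antichain appears. Suppose toward a contradiction that $p$ forces $\dt{A}$ to be an uncountable antichain of $T$. As an uncountable antichain of an $\al$-tree meets cofinally many levels, I build a descending chain $\langle q_\xi : \xi<\al\rangle$ below $p$ (with lower bounds at countable limits) together with nodes $b_\xi\in T$ such that $q_{\xi+1}\forces b_\xi\in\dt{A}$ and $\hht(b_\xi)>\sup_{\eta<\xi}\hht(b_\eta)$. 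As the chain descends, $q_{\xi+1}$ forces $\{b_\eta:\eta\le\xi\}\seb\dt{A}$, so for $\eta<\xi$ it forces $b_\eta$ and $b_\xi$ to be incomparable; since incomparability in $T$ is absolute, $b_\eta$ and $b_\xi$ are incomparable in $V$. Thus $\{b_\xi:\xi<\al\}$ is an uncountable antichain of $T$ already in $V$, contradicting Suslinity.

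The weakly bi-entangled case follows the same outline but needs more care, and by Lemma~\ref{lem:wbe-eq} it suffices to refute clause~(c) in the ground model. Assume $p$ forces $\dt{A}$ to be normal and to witness the failure of weak bi-entangledness for a type $t$, i.e.\ that no $a,b\in\dt{A}$ with $\hht(a)<\hht(b)$ satisfy $\stp(a,b)=t$. First I would strengthen $p$ to decide the two countable ordinals $\sup\Gamma$ and $\min(\hht(\dt{A}))$ from Definition~\ref{def:normal}; below this condition every element of $\dt{A}$ has splitting level bounded by a fixed $\gamma^*$. Then, as before, I build a descending chain $\langle q_\xi:\xi<\al\rangle$ with lower bounds at countable limits and elements $b_\xi\in[T]^2$ such that $q_{\xi+1}\forces b_\xi\in\dt{A}$, each $b_\xi$ having both coordinates of a common height strictly above $\gamma^*$ and above all earlier heights. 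Descending again gives, for $\eta<\xi$, that $q_{\xi+1}$ forces $b_\eta,b_\xi\in\dt{A}$ with $\hht(b_\eta)<\hht(b_\xi)$, so the forced failure yields $q_{\xi+1}\forces\stp(b_\eta,b_\xi)\neq t$, an assertion about the fixed tree which therefore holds in $V$. To convert the resulting uncountable pairwise disjoint family $\{b_\xi:\xi<\al\}\seb[T]^2$ into a counterexample to clause~(c), I would thin it: all splitting levels lie below $\gamma^*$, so there are only countably many possibilities for the pair $b_\xi(0)\res\Delta(b_\xi)\neq b_\xi(1)\res\Delta(b_\xi)$, and passing to an uncountable subfamily I may assume these equal a fixed $v_0\neq v_1$ with $v_0<_{\lex}v_1$. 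Every $b_\xi(0)$ then lies in the $<_{\lex}$-interval of extensions of $v_0$ and every $b_\xi(1)$ in that of $v_1$, so $v_1$ separates the family. The thinned family is uncountable, pairwise disjoint and separated, still satisfies $\max\{\hht(b_\eta(0)),\hht(b_\eta(1))\}<\min\{\hht(b_\xi(0)),\hht(b_\xi(1))\}$ for $\eta<\xi$, and has $\stp(b_\eta,b_\xi)\neq t$ throughout, contradicting Lemma~\ref{lem:wbe-eq}(c) for $T$ in $V$.

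The main obstacle is precisely this weakly bi-entangled reflection: the forced combinatorics must be packaged into a single ground-model object, and it is the descending $\al$-chain with limits absorbed by $\omega_1$-closure that lets one condition witness all the pairwise relations at once, while the thinning step is what restores the separation hypothesis of clause~(c). Everything else is routine: $\omega_1$-closed forcings are proper, so Lemma~\ref{lem:E-pres_E-proper} upgrades~(i) to $E_T$-properness, and~(ii) is the asserted $S$-preservation.
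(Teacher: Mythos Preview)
Your argument is correct but follows a genuinely different route from the paper. The paper proves $E_T$-properness \emph{directly}: given $N\prec H(\theta)$ and $p\in N$, it enumerates the dense sets of $\P$ in $N$ together with all triples $(\dt A_n,b_n,t_n)$, and builds a descending $\omega$-chain $\langle p_n\rangle$ of conditions in $N$, alternately meeting dense sets and, for each triple, passing to an extension in $N$ forcing some $a\in N$ into $\dt A_n$ with $\stp(a,b_n)=t_n$ (this step uses only that $b_n$ is $(N,E_T)$-generic, applied to the ground-model set $B_n=\{c:\exists r\le p_n,\ r\forces c\in\dt A_n\}\in N$). A lower bound of this countable chain is then $(N,\P,E_T)$-generic. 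Your approach instead invokes Lemma~\ref{lem:E-pres_E-proper} to reduce $E_T$-properness to preservation of weak bi-entangledness, and then reflects a putative counterexample to $V$ via a descending $\omega_1$-chain.

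Two comparative remarks. First, your reduction through Lemma~\ref{lem:E-pres_E-proper} requires $T$ to be weakly bi-entangled; this is not a hypothesis of Lemma~\ref{lem:s-closed-ET-proper} as stated, and the paper's direct proof does not use it (though it is of course satisfied in every application in the paper). Second, the paper's argument uses only the countable-chain consequence of $\omega_1$-closure and stays inside $N$, which is lighter machinery; your argument exercises the full strength of $\omega_1$-closure to propagate through $\omega_1$ many limit stages and then needs the thinning/separation step to land back in the hypotheses of Lemma~\ref{lem:wbe-eq}(c). Both are valid; the paper's is the more economical one and matches the general pattern (used later for the iteration theorem) of verifying $E_T$-properness at the level of individual $(N,\P,E_T)$-generic conditions.
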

\begin{proof}
  Let $\P$ be a $\omega_{1}$-closed forcing. It is well known that
  such forcings cannot destroy Suslin trees. Therefore we only prove the
  first part.

  Let $\theta$ be a large enough regular cardinal $N \prec H(\theta)$ be
  a countable elementary submodel containing $T$ and $\P$.
  Fix $p \in N \cap \P$. Let $\delta := N \cap \omega_{1}$. We
  need to show that there is an $(N,\P,E_{T})$-generic extension of
  $p$. Let $\tup{D_{n} : n < \omega}$ enumerate all the dense subsets
  of $\P$ which are in $N$, and let $\{(\dt{A}_{n},b_{n},t_{n}) : n < \omega\}$ list
  all the tuples of the form $(\dt{A},b,t)$ where $\dt{A}$ is a $\P$-name
  in $N$,  $b$ is an $(N,E_{T})$-generic
  element of $[T_{\delta}]^{2}$ and $t$ is a type. We recursively
  construct a decreasing sequence $\tup{p_{n} : n < \omega}$ of conditions
  in $N$.

  Let $p = p_{0}$. Assume that for $n < \omega$, $p_{n}$ has been defined
  and it is in $N$. We construct $p_{n+1}$. If $n$ is even, using elementarity
  pick $p_{n+1}$ to be any extension of $p_{n}$ such that $p_{n+1} \in D_{n} \cap N$.
  Now assume $n$ is odd. Let $B_{n}$ be the set of $c \in [T]^{2}$ such that
  there is an $r \le p_{n}$  such that
  $r \forces_\P$ ``$c \in \dt{A}_{n} \seb [T]^{2}$''.
  Notice that $B_{n} \in N$ by elementarity. If $b_{n} \notin B_{n},$ then let
  $p_{n+1} := p_{n}$. Otherwise, since $b_{n}$ is $(N,E_{T})$-generic,
  there is $a \in N \cap B_{n}$ such that $\stp(a,b) = t_{n}$.
  By elementarity there is $p_{n+1} \le p$ in $N$ such that
  $p_{n+1} \forces$``$\dt{a} \in \dt{A}_{n}$''.

  Finally, let $q$ be a lower bound of $\tup{p_{n} : n < \omega}$. One easily sees
  that $q$ is $(N,\P,E_{T})$-generic. Since $q \le p$, this finishes the proof.
\end{proof}

\section{Forcing the Open Graph Axiom}\label{sec:forcing-OGA}

 In this section, we prove that for any open graph $G$ that is not countably chromatic, there exists a forcing notion
$\P_G$ which is $E_S$-proper and $S$-preserving, and adds an uncountable complete subgraph of $G$.
This is achieved by showing that a variant of Todorcevic's forcing \cite{Todorcevic2011} ---specifically, the one introduced in our recent preprint \cite{MP2025}--- is  $E_S$-proper and preserves  the Suslin line $S$.

For the rest of the section, fix an open graph  $G = (X, E)$ on a subset of the reals that is not countably chromatic and weakly bi-entangled lexicographically ordered Suslin line $(S,<_S,<_{\lex})$.
Let $\mathcal{I}$ be the (proper) $\sigma$-ideal consisting of all subsets $Y$ of $X$ which are countably chromatic. Furthermore, by removing a relatively open subset of $X$, we may assume that no nonempty  open subset of $X$ belongs to $\mathcal I$. Let $\mathcal{H}:=\mathcal{I}^+$ denote the corresponding co-ideal.
\begin{definition}
    For each $n>0$, the Fubini power $\mathcal{H}^n$ is the co-ideal on $X^n$ defined recursively by $\mathcal{H}^1:= \mathcal{H}$ and for $n > 1$,
$$\mathcal{H}^n=\{W \subseteq X^n :\{\overline{x} \in X^{n-1} :W_{\overline{x}}\in \mathcal{H}\}\in \mathcal{H}^{n-1}\},$$ where for $W \subseteq X^n$ and $\overline{x} \in X^{n-1}$,
$$W_{\overline{x}}= \{ y \in X : \overline{x}^\frown y \in W \}.$$
\end{definition}


 The following Lemma is proved in \cite[Lemma 5.1]{Todorcevic2011}.
\begin{lemma}\label{lem:boundaryisincoideal}
    Let $n$ be a positive integer and let $W \in \mathcal{H}^n$ and let
$$
\partial W := \{\overline{x}\in W : (\forall \epsilon > 0)(\exists \overline{y}\in W )(\forall i < n)[(x_i,y_i) \in E \wedge |x_i-y_i| < \epsilon]\}
$$
Then $W \setminus \partial W \notin \mathcal{H}^n$.
\end{lemma}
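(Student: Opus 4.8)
The plan is to induct on $n$, peeling off the last coordinate and combining a one–dimensional argument in that coordinate with the inductive hypothesis in the remaining $n-1$ coordinates. Throughout, write $\mathcal{I}_n := \{Z \subseteq X^n : Z \notin \mathcal{H}^n\}$ for the ideal dual to the co-ideal $\mathcal{H}^n$; the statement to be proved is exactly that $W \setminus \partial W \in \mathcal{I}_n$. A preliminary observation is that $\mathcal{I}_n$ is a $\sigma$-ideal: if each $Z_m \in \mathcal{I}_n$ then, since $\mathcal{I}$ is a $\sigma$-ideal, $\{\overline{x} : (\bigcup_m Z_m)_{\overline{x}} \notin \mathcal{I}\} \subseteq \bigcup_m \{\overline{x} : (Z_m)_{\overline{x}} \notin \mathcal{I}\}$, and the right-hand side is in $\mathcal{I}_{n-1}$ by induction. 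This lets me replace the quantifier ``$\forall \epsilon > 0$'' defining $\partial W$ by ``$\forall k$'', working one scale $1/k$ at a time and taking countable unions freely. For the base case $n=1$, I would write $W \setminus \partial W = \bigcup_k V_k$ with $V_k := \{x \in W : \forall y \in W\,(|x-y| < 1/k \Rightarrow (x,y) \notin E)\}$ and observe that if $I$ is any interval of length $< 1/k$ then $V_k \cap I$ is $E$-independent, so covering $\mathbb{R}$ by countably many such intervals exhibits each $V_k$ as countably chromatic; hence $V_k \in \mathcal{I}$ and $W \setminus \partial W \in \mathcal{I} = \mathcal{I}_1$.

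For the inductive step I first dispose of the last coordinate. Let $\partial_n W := \{\overline{x} \in W : \forall \epsilon\,\exists s\,[(\overline{x}',s) \in W,\ (x_{n-1},s) \in E,\ |x_{n-1}-s| < \epsilon]\}$, where $\overline{x}' = (x_0,\dots,x_{n-2})$. Its fibre over $\overline{x}'$ is exactly the one–dimensional boundary $\partial(W_{\overline{x}'})$, so by the base case every fibre of $W \setminus \partial_n W$ lies in $\mathcal{I}$; the base $\{\overline{x}' : \text{fibre} \notin \mathcal{I}\}$ is empty and $W \setminus \partial_n W \in \mathcal{I}_n$. Since $W' := \partial_n W \subseteq W$, we have $W \setminus \partial W = (W \setminus W') \cup (W' \setminus \partial W)$, so it suffices to treat $W' \setminus \partial W$. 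Now I handle the first $n-1$ coordinates via the inductive hypothesis. Enumerate the countably many basic open rectangles $R = A \times B \subseteq E$ with $A,B$ rational intervals, and for each such $R$ set $C^R := \{\overline{x}' \in X^{n-1} : \exists s \in B\ (\overline{x}',s) \in W\}$, the projection to the first $n-1$ coordinates of $W \cap (X^{n-1} \times B)$. Applying the lemma in dimension $n-1$ to $C^R$ (trivially when $C^R \in \mathcal{I}_{n-1}$, and by the inductive hypothesis otherwise) gives $C^R \setminus \partial C^R \in \mathcal{I}_{n-1}$; since the fibres of $(C^R \setminus \partial C^R) \times X$ are $X \notin \mathcal{I}$ over $C^R \setminus \partial C^R$ and empty elsewhere, we get $(C^R \setminus \partial C^R) \times X \in \mathcal{I}_n$.

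The crux is then the inclusion
\[
  W' \setminus \partial W \ \subseteq\ \bigcup_R \big((C^R \setminus \partial C^R) \times X\big),
\]
which together with the previous paragraph finishes the proof. To verify it, take $\overline{x} = (\overline{x}',t) \in W' \setminus \partial W$ and fix $k$ witnessing $\overline{x} \notin \partial W$ at scale $1/k$. Membership $\overline{x} \in W'$ provides $s$ with $(\overline{x}',s) \in W$, $(t,s) \in E$ and $|t-s|$ small; by openness of $E$ choose a basic rectangle $R = A \times B \subseteq E$ with $t \in A$, $s \in B$ and $\mathrm{diam}(A \cup B) < 1/k$. Then $\overline{x}' \in C^R$, and if $\overline{x}'$ lay in $\partial C^R$ a neighbour $\overline{y}' \in C^R$ with $(x_j',y_j') \in E$ and $|x_j'-y_j'| < 1/k$ for $j<n-1$ would come equipped with some $s' \in B$ satisfying $(\overline{y}',s') \in W$; since $A \times B \subseteq E$ we get $(t,s') \in E$ and $|t - s'| < 1/k$, so $(\overline{y}',s')$ witnesses $\overline{x} \in \partial W$ at scale $1/k$, a contradiction. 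Hence $\overline{x}' \in C^R \setminus \partial C^R$, as required.

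The main obstacle, and the reason the argument must interleave the base case with the inductive hypothesis rather than reduce coordinate-by-coordinate, is producing a single point $\overline{y} \in W$ that is simultaneously edge-close to $\overline{x}$ in every coordinate. The coordinates cannot be treated independently: the Fubini powers $\mathcal{H}^n$ are not symmetric — a Sierpiński-type set shows that having all slices in $\mathcal{I}$ along one axis does not force membership in $\mathcal{I}_n$ relative to another — so permuting coordinates is illegitimate. The device that resolves this is the use of the rectangles $A \times B \subseteq E$: encoding the last-coordinate target through a rectangle contained in $E$ means that any neighbour whose fibre merely \emph{meets} $B$ automatically supplies the missing edge to every point of $A$, removing all dependence on a uniform modulus of openness, while the countably many rectangles are absorbed by the $\sigma$-ideal $\mathcal{I}_n$. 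Checking that $C^R$ genuinely falls under the inductive hypothesis and that the two scales (the diameter of the rectangle and the boundary scale $1/k$) can be matched is the only delicate bookkeeping.
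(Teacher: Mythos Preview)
The paper does not give its own proof of this lemma; it simply cites \cite[Lemma~5.1]{Todorcevic2011}. Your inductive argument is correct and is in fact the standard proof from that reference: the base case covers $W\setminus\partial W$ by countably many $E$-independent pieces using short intervals, and the inductive step strips off the last coordinate via $\partial_n W$ and then reduces to the $(n{-}1)$-dimensional statement through the projections $C^R$ indexed by basic open rectangles $R=A\times B\subseteq E$, the rectangle supplying exactly the coupling needed to promote a $\partial C^R$-neighbour of $\overline{x}'$ to a $\partial W$-witness for $\overline{x}$. The $\sigma$-ideal verification for $\mathcal{I}_n$ and the diameter bookkeeping $\mathrm{diam}(A\cup B)<1/k$ are handled correctly.
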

We need one more definition before we introduce our forcing notion.
\begin{definition}

Let $\theta$ be a regular cardinal such that $X, G, \mathcal{I}$ and $(S,<_S,<_{\lex})$ belong to $H(\theta)$ and fix $<_w$ a well-order of $H(\theta)$.
\begin{enumerate}
    \item[(1)]  Let $X \in H(\theta)$, by $\mathcal{SK}(X)$ we denote the Skolem closure of $X$ (where the set of Skolem functions is defined using the  well-order $<_w$ of $H(\theta)$).
\item[(2)]  If $N \prec H(\theta)$ is countable, by $N^+$ we denote $\mathcal{SK}(N \cup \{N\})$.
\end{enumerate}
\end{definition}
The idea of using successors of models in side conditions is a quite natural way to obtain $(N,\P,E_S)$-genericity.
\begin{definition}
Let $\P_G$ be the collection of all pairs $p=(\mathcal{N}_p,f_p)$ satisfying the following conditions:
\begin{enumerate}[{\rm (1)}]
    \item $\mathcal{N}_p=\{N_0,\dots,N_{m}\}$ has the following properties:
    \begin{enumerate}[{\rm (a)}]
        \item For each $i\leq m$, $N_i \prec(H(\mathfrak{c}^+),\in,<_w)$ (countable)  containing $X, G, \mathcal{I}$ and $(S,<_S,<_{\lex})$ as elements.

    \item $N_i\in N_i^+\in N_{i+1}$ for $i<m$.
    \end{enumerate}
\item $f_p:\mathcal{N}_p\to X$ such that
\begin{enumerate}[{\rm (a)}]
    \item For each $i\leq m,$ $f_p(N_i)\in N_{i+1}\setminus N_i^+$ (where $N_{n+1}=X)$.
    \item For each $i\leq m$, $f_p(N_i)\notin \bigcup (\mathcal{I}\cap N_i^+)$.
    \item For each $i<j\leq m$, $(f_p(N_i),f_p(N_j))\in E.$
\end{enumerate}
\end{enumerate}
Let $q\leq p$ if $f_p\subseteq f_q$.
\end{definition}
The next Lemma is straightforward and appears implicit in \cite{Todorcevic2011}. We just write it to have it for future reference.
\begin{lemma}\label{lem:separatedsequenceisincoideal}
    Let $\{N_0,\dots,N_{m}\}$ be an increasing $\in$-chain of countable elementary submodels of $H(\mathfrak{c}^+)$ containing all relevant parameters and let $(x_0,\dots,x_{m})\in X^{m+1}$ be such that $x_i\in N_{i+1}\setminus N_i$ (where $N_{m+1}=X)$ and $x_i\notin \bigcup (N_i\cap \mathcal{I})$ for $i\leq m.$ If $W\subseteq X^{m+1}$ is such that $W\in N_0 $ and $(x_0,\dots,x_m)\in W$, then $W\in \mathcal{H}^{m+1}.$
\end{lemma}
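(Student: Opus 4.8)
The plan is to argue by induction on $m$, peeling off the top coordinate in lockstep with the top model, so as to follow the recursive definition of the Fubini power $\mathcal{H}^{m+1}$. Throughout I will use that an $\in$-chain $N_0 \in N_1 \in \cdots \in N_m$ of countable elementary submodels satisfies $N_i \subseteq N_j$ for $i \leq j$, since each $N_{i+1}$ contains an enumeration of the countable set $N_i$.

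The base case $m=0$ isolates the one genuine idea. Here we have a single model $N_0$, a point $x_0 \in X \setminus N_0$ with $x_0 \notin \bigcup(N_0 \cap \mathcal{I})$, and a set $W \subseteq X$ with $W \in N_0$ and $x_0 \in W$; the goal is $W \in \mathcal{H} = \mathcal{I}^+$. If instead $W \in \mathcal{I}$, then $W \in N_0 \cap \mathcal{I}$, whence $x_0 \in W \subseteq \bigcup(N_0 \cap \mathcal{I})$, contradicting the hypothesis on $x_0$. Thus the condition $x_i \notin \bigcup(N_i \cap \mathcal{I})$ is precisely the assertion that every $\mathcal{I}$-coded set in $N_i$ which contains $x_i$ is $\mathcal{I}$-positive, and this is the engine of the whole argument.

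For the inductive step I assume the statement for chains of length $m$ and take $\{N_0,\dots,N_m\}$, the tuple $(x_0,\dots,x_m)$, and $W \in N_0$ with $(x_0,\dots,x_m) \in W$. By the definition of $\mathcal{H}^{m+1}$ it suffices to show that $V := \{\overline{x} \in X^m : W_{\overline{x}} \in \mathcal{H}\}$ lies in $\mathcal{H}^m$, and I would split this into two steps. First I verify $(x_0,\dots,x_{m-1}) \in V$, i.e. that the section $W_{(x_0,\dots,x_{m-1})}$ is in $\mathcal{H}$: since $N_0 \subseteq N_m$ and $x_0,\dots,x_{m-1} \in N_m$, this section is definable in $N_m$ from $W$ and $x_0,\dots,x_{m-1}$, hence belongs to $N_m$; as $x_m \in X \setminus N_m$ avoids $\bigcup(N_m \cap \mathcal{I})$ and lies in the section, the base-case argument applied at $N_m$ gives $W_{(x_0,\dots,x_{m-1})} \in \mathcal{H}$. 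Second, I apply the induction hypothesis to $V$ with the shorter chain $\{N_0,\dots,N_{m-1}\}$ and the tuple $(x_0,\dots,x_{m-1})$: because $\mathcal{I}, W \in N_0$ the set $V$ is definable from parameters in $N_0$, so $V \in N_0$, while the genericity and membership requirements are inherited from the hypotheses (using $x_{m-1} \in N_m \setminus N_{m-1} \subseteq X \setminus N_{m-1}$). The induction hypothesis then yields $V \in \mathcal{H}^m$, which is exactly $W \in \mathcal{H}^{m+1}$.

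I expect no serious obstacle here; the proof is essentially bookkeeping once the base case is in hand. The only point demanding care is the pairing that matches the last coordinate $x_m$ with the top model $N_m$ (so that the relevant section is an \emph{element} of $N_m$, enabling the genericity argument), while the first $m$ coordinates recurse against $N_0,\dots,N_{m-1}$. Reversing this correspondence would break the definability claims that place $V$ and the section in the appropriate models.
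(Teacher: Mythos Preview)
Your argument is correct. The paper itself does not give a proof of this lemma, stating only that it is straightforward and implicit in \cite{Todorcevic2011}; your induction on $m$, peeling off the last coordinate against the top model $N_m$ and then recursing on the remaining chain, is exactly the intended verification.
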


\begin{theorem}
    The forcing notion $\P_G$ is $S$-preserving and $E_S$-proper.
\end{theorem}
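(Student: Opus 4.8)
The plan is to prove the two properties separately, as the $S$-preserving part is essentially the content of \cite{Todorcevic2011} and \cite{MP2025}, while the $E_S$-properness requires a genuine argument using the successor-of-model side conditions. First I would fix a large enough regular cardinal $\theta$ and observe that, since conditions only see models below $H(\mathfrak{c}^+)$ and the relevant parameters all lie in $H(\theta)$, the club-many witnessing $N \prec H(\theta)$ can be taken so that $N \cap H(\mathfrak{c}^+)$ is itself one of the permissible models appearing in side conditions. The heart of the matter is to show that for each $p \in \P_G \cap N$ there is an $(N,\P_G,E_S)$-generic extension $q \le p$.

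\smallskip

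The key step is to build such a $q$ by adjoining $N^+$ (more precisely $N \cap H(\mathfrak{c}^+)$) to the side-condition chain of $p$ and assigning it a suitable value. Concretely, I would let $q = (\mathcal{N}_p \cup \{N \cap H(\mathfrak{c}^+)\}, f_q)$ where $f_q$ extends $f_p$ by some $x \in X \setminus \bigcup(\mathcal{I} \cap N^+)$ that is $E$-connected to all values already assigned; such an $x$ exists because no nonempty open subset of $X$ lies in $\mathcal{I}$ and Lemma~\ref{lem:boundaryisincoideal} guarantees that the set of admissible values is large in $\mathcal{H}$. Standard arguments (as in \cite{Todorcevic2011,MP2025}) then show this $q$ is $(N,\P_G)$-generic, using that any condition extending $q$ can have its ``new'' models reflected into $N$. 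The point of inserting $N^+$ rather than $N$ is that genericity over $N$ is controlled by a model that still sits inside an element of the chain.

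\smallskip

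The main obstacle, and the step I expect to be most delicate, is verifying the extra clause of $(N,\P_G,E_S)$-genericity: given a name $\dt{A} \in N$, an $(N,E_S)$-generic $b \in [S_{N\cap\omega_1}]^2$, a type $t$, and $r \le q$ forcing $b \in \dt{A} \seb [S]^2$, I must produce $a \in [S]^2 \cap N$ and $s \le r$ with $\stp(a,b) = t$ and $s \forces ``a \in \dt{A}''$. The strategy is to define, inside $N$, the set $B$ of those $c \in [S]^2$ for which some condition below (a suitable restriction of) $r$ forces $c \in \dt{A}$; by elementarity $B \in N$, and one checks $b \in B$. Since $b$ is $(N,E_S)$-generic, there is $a \in N \cap B$ with $\stp(a,b) = t$. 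The crux is then an \emph{amalgamation} argument: one must fuse the witnessing condition for $a$ (which lives in $N$) with $r$ into a common extension $s$. Here the side-condition structure does the work, since the models witnessing $r$ above $N$ can be reflected, allowing the two conditions to be glued along their shared part inside $N$ while preserving both the $E$-edge requirements among the assigned values and the coherence of the model chain; this is precisely where the design choice $N_i \in N_i^+ \in N_{i+1}$ and clause (2b) on avoiding $\bigcup(\mathcal{I} \cap N_i^+)$ become essential, via Lemma~\ref{lem:separatedsequenceisincoideal} to keep the fused assignment in the appropriate Fubini power of the co-ideal.
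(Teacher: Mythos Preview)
Your outline has the right skeleton --- adjoin $M = N \cap H(\mathfrak{c}^+)$ to the chain, assign it a value outside $\bigcup(\mathcal{I}\cap M^+)$, then for the $E_S$-clause reflect into $N$ and amalgamate --- but the order of operations in your final paragraph is the wrong way round, and as written the amalgamation step fails. If you define $B$ simply as the set of $c$ for which some end-extension of $r_M$ forces $c\in\dt{A}$, then $B\in N$ and $b\in B$, and $(N,E_S)$-genericity hands you $a\in B\cap N$ with $\stp(a,b)=t$ together with a witness $s\in N$. But there is no reason whatsoever for the values $f_s(K_j)$ assigned to the new models of $s$ to be $E$-connected to the values $f_r(N_i)$ assigned to the new models of $r$; clause (2)(c) of the definition of $\P_G$ will typically block $s\cup r$ from being a condition. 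Reflection of models and the $\in$-chain structure take care of clause (1), but they say nothing about the edges.

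The missing ingredient is Lemma~\ref{lem:boundaryisincoideal}, and it must be invoked \emph{before} you apply $(N,E_S)$-genericity, not after. The paper first forms the set $W\subseteq X^m$ of value-tuples coming from end-extensions of $r_M$ that force $b\in\dt{A}$; here the successor-model trick matters, because $b\in[S_\delta]^2\subseteq M^+$ (as $S_\delta$ is countable and $\delta\in M^+$), so $W\in M^+=N_0^+$ and Lemma~\ref{lem:separatedsequenceisincoideal} applies to the chain $N_0^+,\ldots,N_{m-1}^+$ to give $W\in\mathcal{H}^m$. Then Lemma~\ref{lem:boundaryisincoideal} puts the tuple $(f_r(N_0),\ldots,f_r(N_{m-1}))$ into $\partial W$, yielding a nearby tuple $(x_0,\ldots,x_{m-1})\in W$ with each $x_i$ $E$-adjacent to $f_r(N_i)$. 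Only now do you pass to rational intervals $V_i\ni x_i$ with $V_i\times\{f_r(N_i)\}\subseteq E$ and $V_i\subseteq U_i$, and define $B$ as those $c$ admitting a witness with values in $\prod V_i$. This $B$ is in $N$ (it no longer mentions $b$), and any witness $s$ for $a\in B\cap N$ now amalgamates with $r$ automatically: the $E$-edges hold either via $U_i\times U_j$ (for $i\neq j$) or via $V_i\times\{f_r(N_i)\}$ (for $i=j$). Your sketch invokes Lemma~\ref{lem:separatedsequenceisincoideal} but never Lemma~\ref{lem:boundaryisincoideal}, and that is precisely the lemma doing the work that makes amalgamation possible.
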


\begin{proof}
    We shall prove that $\P_G$ is an $S$-preserving $E_S$-proper poset which adds an uncountable complete subgraph $Y$  of $X$ .

     Let $\overline{M}\prec H((2^{\mathfrak{c}})^+)$ countable which contains all relevant parameters. Consider $p\in \overline{M}\cap \P_G$. We will find $q\leq p$ which is $(M,\P_G,E_S)$-generic. Set  $M:=\overline{M}\cap H(\mathfrak{c}^+)$ and $\delta=M\cap\al$. Define $q=(\mathcal{N}_p\cup \{M\},f_p\cup\{(M,f_{q}(M))\})$ where  $f_q(M)$ is any element of $X\setminus \bigcup (\mathcal{I}\cap M^+)$ and such that $(f_q(M),f_p(N))\in E$ for any $N\in\mathcal{N}_p$. To see that such a $f_q(M)$ exists. Let $N^*$ be the maximum element of $\mathcal{N}_p$. Pick an open rational interval $U$  such that $f_p(N^*)\in U$ and $(f_p(N),x)\in E$ for all $N\in\mathcal{N}_p\cap N^*$ and $x\in U$.
Let $Z$ be the set of all elements of $x\in U$ such that for any $y\in U$  $(x,y)\notin E$. Notice that $Z\in N^*\cap \mathcal{I}$. It follows, from clause 2(b) of the definition of the forcing, that $f_p(N^*)\notin Z$. Pick $y\in U$ such that $(y,f_p(N^*))\in E$. Since $E$ is open there is a rational interval $V$ such that $y\in V\subseteq U$ and $V\times \{x\}\subseteq E.$ Let $f_q(M)$ be any element in $V\setminus (\bigcup (M^+\cap \mathcal{I})\cup M^+)$ (this is possible since all nonempty subsets of $X$ belong to  $\mathcal{H})$.

We claim that $q$ is a $(M,\P_G,E_S)$-generic. To see this,  fix a dense open set $D\in M$, $b\in [S_\delta]^2 $ and $\dot A\in M$ a $\P_G$-name for a subset of $[S]^2$ and a type $t$. Let $r\leq q$. By extending further if necessary, we may assume that $r \in D$, $r$ decides whether or not $b \in\dot A$. If $r$ forces that $b \notin\dot A$, then replace $\dot A$ in what follows by the canonical $\P_G$-name for $[S]^2$. Thus, we may assume without loss of generality that $r$ forces that $b \in \dot A$.

We need to find $s\in D\cap M$ and $a\in [S]^2\cap M$ such that $s$ is compatible with $r$, $\stp(a,b)=t$ and $s\Vdash`` a\in\dot A "$.  Let $r_M:=(\mathcal{N}_r\cap M, f_r\restriction\mathcal{N}_r\cap M).$ It follows that $r_M\in \P_G\cap M$ and $r$ is an end-extension of $r_M$. Let $m=|\mathcal{N}_r\setminus M|$ and $M=N_0,\dots,N_{m-1}$ denote the increasing enumeration of  $\mathcal{N}_r\setminus M$. Let $W$ be the set of all $m$-tuples $(x_0,\dots,x_{m-1}) $ such that there is an end extension $r'$ of $r_M$ in $D$ such that
\begin{enumerate}[{\rm(a)}]
    \item
    $|\mathcal{N}_{r'}\setminus\mathcal{N}_{r_M}|=m$
    \item if $K_0,\dots,K_{m-1}$ is the increasing enumeration of $\mathcal{N}_{r'}\setminus\mathcal{N}_{r_M}$ then $f_{r'}(K_i)=x_i$ for $i<m$.
    \item $r'\Vdash ``b\in \dot A"$.
\end{enumerate}
Using that $E$ is open, fix basic open rational intervals $U_i (i<m)$ such that $f_r(N_i)\in U_i$ and $U_i\times U_j\subseteq E$ for $i\ne j \in m.$
Here is the key point of the definition of the forcing, since $a\in N_0^+$, then $W\in N_0^+$ and $(f_r(N_0),\dots,f_r(N_{m-1}))\in W$. Thus it follows, from Lemma \ref{lem:boundaryisincoideal} and Lemma \ref{lem:separatedsequenceisincoideal}, that $\partial W\in \mathcal{H}^m $ and $(f_r(N_0),\dots,f_r(N_{m-1}))\in \partial W.$ Pick $\varepsilon>0$ such that $B_{\varepsilon}(f_r(N_i))\subseteq U_i$ for all $i<m$. From the definition of $\partial W$ we infer that there is a tuple $(x_0,\dots,x_{m-1})\in W$ such that $(f_r(N_i),x_i)\in E$ and $|f_r(N_i)-x_i|<\varepsilon$ for all $i<m$. Using, once more,  that $E$ is open find open rational intervals  $V_i (i<m)$ such that $x_i\in V_i\subseteq U_i$ and $V_i\times \{f_r(N_i)\}\subseteq E$ for $i<m$. Let $r^*$ be  a witness that  $(x_0,\dots,x_{m-1})\in W$ (we do not claim that $r^*$ is compatible with $r$).

Let $B$ denote the set of pair of nodes $c\in [S]^2$ such that there is an $s\in D$ such that $s\Vdash`` c\in \dot A"$
and $(f_s(K_0),\dots,f_s(K_{m-1}))\in V_1\times\dots\times V_{m-1}$. Notice that $r^*$  witness that $b\in B$. Since $b$ is $(M,E_S)$-generic, there exists $a\in B\cap M$ such that  $\stp(a,b)=t$.  By elementarity, we can find  $s$ in $M$ which is a witness that $a\in B$. We claim that $s$ and $r$ are compatible. Let $u:=s\cup r$. We will show that $u\in \P_G$. Then clearly $u\leq r,s$. It is easy to check that $u$ satisfies all properties for being in $\P$ except perhaps clause (2)(c). To see this, it is sufficient to show that for any $i,j<m,$ $(f_r(N_i),f_s(K_j))\in E.$ We proceed by cases. On one hand, if $i\ne j$, then $(f_r(N_i),f_s(K_j))\in U_i\times U_j\subseteq E$. On the other hand, if $i=j$, then $(f_r(N_i),f_s(K_j))\subseteq \{f_r(N_i)\}\times V_i\subseteq E$. This completes the proof that $q$ is $(M,\P,E_S)$-generic.

The proof that $\P_G$ preserves $S$ follows exactly as the proof in \cite{Todorcevic2011}, except replacing the construction of the $(M,\P_G)$-generic condition with the proof above. Also, note that while in \cite{Todorcevic2011} the Suslin tree is assumed to be coherent, this property is not used in the proof.

Finally, observe that for each $\alpha<\al,$ the set $D_\alpha:=\{p: \alpha\in \bigcup {\rm dom}(f_p)\} $ is dense-open. Thus, if $G$ is any generic filter intersecting the above dense sets, then  the  set $Y:={\rm ran}(\bigcup_{p\in G}  f_p)$ is an uncountable complete subgraph. This concludes the proof of the Theorem.
\end{proof}

\section{Preservation  under countable support iterations.}\label{sec:iteration}

From the previous section, we know that any particular instance of OGA can be forced while preserving that $(S,<_S,<_{\lex})$ remains $2$-entangled, by using an $E_S$-proper and $S$-preserving forcing.  Our current goal is to prove that countable support iterations of $E_S$-proper forcing remain $E_S$-proper.  In order to do this, we shall use an enhanced version of  countable support iterations developed  by Schlindwein.

The reader may wonder what distinguishes Schlindwein's approach from more classical frameworks. A key difference lies in how fullness is managed in countable support iterations. Classically, it suffices that the set of conditions is full at each coordinate. In Schlindwein's framework, however, this fullness must be witnessed by conditions that cohere in a specific, well-behaved manner with the earlier posets in the iteration. Roughly speaking, while classical constructions may use any saturation of the posets, Schlindwein makes a careful selection to ensure a particularly ``nice$"$ saturation. This technical distinction is crucial for the ensuing arguments.

For convenience of the reader, we will review the work of Schlindwein.

Let $\P$ be a forcing poset and let $\dQ$ be a $\P$-name for a forcing poset. We define $\P*\dQ=\{(p,\dt{q}): p\in \P \wedge p\forces_\P``\dt{q}\in \dQ " \wedge \P{\rm -rank}(\dt{q})\leq\P{\rm -rank}(\dQ)\}$ where $\P$-rank is recursively defined for all $\P$-names as follows: $$\P{\rm -rank}(x)=\sup\{\P{\rm -rank}(y)+1: \exists p\in \P, (y,p)\in x\}. $$

The following  Definition is \cite[Definition 69]{Schlindwein94}.
\begin{definition}
    Let $\gamma$ be an ordinal. A $\gamma$-stage countable support iteration  is a pair of sequences $\langle \P_\xi,\dt{\mathbb{Q}}_\eta:\xi\leq\gamma, \eta< \gamma\rangle  $ so that:
    \begin{enumerate}[{\rm (1)}]
        \item Each $\P_\xi$ is a forcing poset.
        \item All conditions in $\P_\xi$ are sequences of length $\xi.$
        \item $\dQ_\xi$ is an ordered triple $\langle \dQ_\xi, \dt{\leq}_{\xi},\dt{\mathbbm{1}}_{\xi}\rangle$ and it is forced by all conditions in $\P_\xi$ that ``$\dt{\leq}_\xi $ is a  partial order on $\dQ_\xi$ with largest element $\dt{\mathbbm{1}}_{\xi}"$.
        \item $\P_0$ is the trivial poset $=\{0\}.$
        \item Conditions in $\P_{\xi+1}$ are sequences $p$ such that:
        \begin{enumerate}[{\rm (a)}]
            \item $p\restriction\xi\in\P_\xi$, and
            \item $p\restriction \xi\forces_{\P_\xi}``p(\xi)\in \dQ_\xi"$  and $\P_\xi{\rm -rank}(p(\xi))\leq\P_\xi{\rm -rank}(\dQ_\xi).$

            The ordering on conditions is $p^*\leq p$ iff $p^*\upharpoonright\xi\leq p\restriction\xi $ and $$p^*\restriction\xi\forces_{\P_\xi} ``p^*(\xi)\ \dt{\leq}_\xi \  p(\xi)".$$
        \end{enumerate}
        \item For limit $\eta,$ $\P_\eta$ consists of sequences $p$ so that:
        \begin{enumerate}[{\rm (a)}]
        \item $p\restriction{\xi}\in \P_\xi$ for all $\xi<\eta,$ and
        \item The support $\supt(p) :=\{\xi<\eta: p\res\xi\not\forces_{\P_\xi}``p(\xi)=\dt{\mathbbm{1}}_{\xi}"\}$ is countable.

        The ordering is given by $p^*\leq p$ iff $\forall\xi<\eta, p^*\restriction \xi\leq p\restriction\xi$.
        \end{enumerate}
        \item $\dt{\mathbbm{1}}_{\gamma}\restriction\xi=\dt{\mathbbm{1}}_{\xi}$ and $\P_\xi$-rank$(\dt{\mathbbm{1}}_{\gamma}(\xi))\leq \P_\xi$-rank$(\dQ_\xi)$.
        \item For any $\xi\leq\eta\leq \gamma,$ $q\in \P_\eta, p\in \P_\xi$. If $q\restriction \xi\leq p$ then $q\restriction \xi\cup p\restriction[\xi,\eta)\in \P_\eta.$
    \end{enumerate}
\end{definition}

\begin{definition}\label{schlindwein-maindef}
   Let  $\langle \P_\xi,\dt{\mathbb{Q}}_\eta:\xi\leq\gamma, \eta< \gamma\rangle  $ be a countable support iteration. For each $\xi\leq \gamma $, we recursively define $I_\gamma^\xi(\dt{a})$  for every $\P_\gamma$-name $\dt{a}$ as follows: Suppose $I_\eta^\xi(\dt{b})$  has been defined for all $\P_\eta$-names $\dt{b}$, for all $\xi<\eta<\gamma$  and that $I_\gamma^\xi(\tau)$  has been defined for all $\P_\gamma$-names $\tau$ such that $\P_\gamma$-rank$(\tau) < \P_\gamma$-rank$(\dt{a})$. For $p\in \P_\gamma$, we define  $p\restriction [\xi,\gamma)$  to be a $\P_\xi$-name which is forced to be a function with domain $[\xi,\gamma)$, and such that whenever $\xi<\eta<\gamma$  we have $\dt{\mathbbm{1}}_{\xi}\forces_{\P_\xi} ``p\restriction
[\xi,\gamma)(\eta) = I_\eta^\xi(p(\eta))"$.
For $x,y$ $\P_\xi$-names, let op$(x,y)$ denote the $\P_\xi$-name forced to be the order pair of $x$ and $y$. Finally, set $I_\xi^\gamma(\dt{a})$ to be equal to
$$
\{\langle {\rm op}(\tau,p\res [\xi,\gamma)),p\res \xi\rangle: p\forces_{\P_\gamma}``\tau\in\dt{a}"\ \wedge \P_\gamma{\rm-rank}(\tau)<\P_\gamma{\rm -rank}(\dt{a})\}.
$$
\end{definition}
\begin{rmk}
The $\P_\xi$-name $p\restriction [\xi,\gamma)$ will always refer to one given in Definition \ref{schlindwein-maindef}.
\end{rmk}
\begin{definition}
    Let $\langle \P_\xi,\dt{\mathbb{Q}}_\eta:\xi\leq\gamma, \eta< \gamma\rangle  $ be an iteration. For each $\xi<\gamma,$ define $\dt{\P}_{\xi,\gamma}$ to be the $\P_\gamma$-name characterized by: $p\forces_{\P_{\xi}}`` \dt{q}\in \dt{\P}_{\xi,\gamma}"$ iff for every $p'\leq p $ there is an $q^*\in \P_\gamma$ such that $q^*\restriction{\xi} \leq p'$ and $q^*\restriction {\xi}\forces_{\P_{\xi}}`` q^*\restriction[\xi,\gamma)=\dt{q}"$ where   $q^*\restriction[\xi,\gamma)$ is as in Definition \ref{schlindwein-maindef}.
\end{definition}
The next Lemma appears as \cite[Lemma 72]{Schlindwein94}.
\begin{lemma}\label{lem:schlindweinmain}
    Let  $\langle \P_\xi,\dt{\mathbb{Q}}_\eta:\xi\leq\gamma, \eta< \gamma\rangle  $ be a  countable support iteration. Then the following conditions are satisfied.
    \begin{enumerate}[{\rm (1)}]
       \item $\dt{\mathbbm{1}}_{\xi}\forces_{\P_\xi} ``\dt{\P}_{\xi,\gamma}$ is a poset  $"$.
       \item If $p\forces_{\P_\xi} ``\dt{b}$ is a $\dt{\P}_{\xi,\gamma}$-name $"$, then there exists a $\P_\gamma$-name $\dt{a}$ such that $p\forces_{\P_\xi} ``\dt{b}=I_\gamma^\xi(\dt{a})"$ and $\P_\gamma$-rank$(\dt{a})\leq \P_\xi$-rank$(\dt{b})$.
       \item If $\varphi(v_1,\dots,v_n)$ is a first-order formula in the language of  set theory with free variables among $v_1,\dots,v_n$ and $p\in\P_\gamma$, and  $\dt{a}_1,\dots,\dt{a}_n$ are $\P_\gamma$-names, then
        $$ p\restriction\xi\forces_{\P_\xi}``p\restriction [\xi,\gamma)\forces_{\dt{\P}_{\xi,\gamma}} \varphi(I_\gamma^\xi(\dt{a}_1),\dots,I_\gamma^\xi(\dt{a}_n))" $$
        if and only if

        $$ p\forces_{\P_\gamma}``\varphi(\dt{a}_1,\dots,\dt{a}_n)". $$
    \end{enumerate}
\end{lemma}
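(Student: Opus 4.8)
The plan is to prove all three parts simultaneously by transfinite induction on $\gamma$, with an inner induction on $\P_\gamma$-rank of names, so that the argument mirrors exactly the double recursion that defines $I_\gamma^\xi$ and $p\restriction[\xi,\gamma)$ in Definition~\ref{schlindwein-maindef}. The base case $\gamma=\xi$ is trivial, since then $\dt{\P}_{\xi,\gamma}$ is a name for the trivial poset and $I_\gamma^\xi$ acts essentially as the identity. For the successor and limit stages, the structural clauses (5)--(8) of the definition of a countable support iteration let me reduce any assertion about the tail $[\xi,\gamma)$ to assertions about tails $[\xi,\eta)$ with $\xi<\eta<\gamma$, where the inductive hypothesis is available; the countable support clause (6) guarantees that only countably many coordinates are nontrivial, so no genuinely new phenomena appear at limits.

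For part (1), I would unfold the defining property of $\dt{\P}_{\xi,\gamma}$ --- that $\dt q$ is forced to be a condition below $p$ exactly when every $p'\le p$ extends to some $q^*\in\P_\gamma$ with $q^*\restriction\xi\le p'$ and $q^*\restriction\xi\forces`` q^*\restriction[\xi,\gamma)=\dt q"$ --- and verify the poset axioms for the inherited ordering directly. Reflexivity and the existence of a largest element $\dt{\mathbbm{1}}_{\gamma}\restriction[\xi,\gamma)$ are immediate from clause (7). The one nontrivial point is transitivity: given witnessing $\P_\gamma$-conditions for $\dt q_1\le\dt q_2$ and for $\dt q_2\le\dt q_3$ below a common $p'$, I splice their tails into a single $\P_\gamma$-condition using the coherence clause (8), thereby witnessing $\dt q_1\le\dt q_3$.

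Part (2) is a representation (surjectivity) statement, and I would prove it by recursion on the rank of the $\dt{\P}_{\xi,\gamma}$-name $\dt b$. For each pair $(\sigma,\dt r)$ that $p$ forces into $\dt b$ --- where $\sigma$ is a lower-rank $\dt{\P}_{\xi,\gamma}$-name and $\dt r$ a condition --- I apply the inductive hypothesis to write $\sigma=I_\gamma^\xi(\tau)$ for some $\P_\gamma$-name $\tau$, and pull $\dt r$ back to a genuine $\P_\gamma$-condition $q^*$ via the defining property of $\dt{\P}_{\xi,\gamma}$. Collecting the resulting pairs $\langle{\rm op}(\tau,q^*\restriction[\xi,\gamma)),q^*\restriction\xi\rangle$ into a single $\P_\gamma$-name $\dt a$ requires a mixing argument over a maximal antichain of $\P_\xi$; the rank bound $\P_\gamma$-rank$(\dt a)\le\P_\xi$-rank$(\dt b)$ then falls out of the construction, since each $\tau$ has rank strictly below that of $\sigma$.

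Part (3) is the heart of the lemma and I expect it to be the main obstacle: it asserts that the factorization is a forcing isomorphism, preserving the forcing relation for \emph{every} first-order formula. I would argue by induction on the complexity of $\varphi$, the atomic cases $v_1\in v_2$ and $v_1=v_2$ being the crux. For these I unfold the definition of $I_\gamma^\xi(\dt a_2)$, whose elements are precisely the pairs $\langle{\rm op}(I_\gamma^\xi(\tau),p\restriction[\xi,\gamma)),p\restriction\xi\rangle$ arising from conditions $p$ forcing $\tau\in\dt a_2$, and match the $\dt{\P}_{\xi,\gamma}$-membership statement on the left against the $\P_\gamma$-membership statement on the right, invoking the definition of the forcing relation together with the inner rank-induction for the $=$ clause. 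The propositional connectives are routine, while the existential quantifier $\exists v\,\psi$ is exactly where parts (2) and (3) interlock: a witness produced in the iterated extension is a $\dt{\P}_{\xi,\gamma}$-name, which by part (2) equals $I_\gamma^\xi(\dt c)$ for some $\P_\gamma$-name $\dt c$, and the inductive hypothesis for $\psi$ then transfers the witness to the $\P_\gamma$ side; conversely $I_\gamma^\xi$ sends a $\P_\gamma$-witness to an iterated one. The real delicacy is bookkeeping --- keeping the joint recursion on $\gamma$ and on rank well-founded while the side conditions $p\restriction\xi$ and the tail-names $p\restriction[\xi,\gamma)$ are threaded correctly through each clause.
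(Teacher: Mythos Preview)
The paper does not supply its own proof of this lemma: it is quoted verbatim as \cite[Lemma~72]{Schlindwein94} and used as a black box. So there is no in-paper argument to compare against; the relevant benchmark is Schlindwein's original proof.

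Your outline is exactly the shape of Schlindwein's argument: a joint transfinite induction on $\gamma$ together with an inner induction on $\P_\gamma$-rank, matching the double recursion in Definition~\ref{schlindwein-maindef}, with part~(3) handled by induction on formula complexity and parts~(2) and~(3) interlocking at the existential-quantifier step. The only caveat is that your sketch understates how much care the atomic cases of~(3) actually demand---one must simultaneously verify that $I_\gamma^\xi$ is well-defined, that $p\restriction[\xi,\gamma)$ really names an element of $\dt{\P}_{\xi,\gamma}$, and that the forcing relations match, all within the same rank-induction---and Schlindwein in fact packages several auxiliary claims (about compatibility of restrictions, about how $I_\gamma^\xi$ interacts with the ordering, etc.) into the induction before the formula-complexity argument can run cleanly. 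But the architecture you describe is the correct one and is not meaningfully different from the cited source.
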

The following definition is somewhat unusual, but it provides extra flexibility by  allowing us to use names outside of $N.$
\begin{definition}\label{def-N[G]}
Let $\P$ be a forcing notion.
    Let $N[\dt{G}]$ denote the $\P$-name characterized as follows: for any condition $p\in \P$ and any $\P$-name $\dt{a}$ we have that  $ p\forces_\P ``\dt{a}\in N[\dt{G}_\P]"$ if and only if for every $p_1\leq p$ there exists $p_2\leq p_1$ and $\dt{b}\in N$ such that $p_2\forces_\P \dt{a}=\dt{b}.$
\end{definition}

The following Lemma appears  in \cite[Lemma 13]{Schlindwein94}.
\begin{lemma}\label{schlindwein13}
    Let  $\langle \P_\xi,\dt{\mathbb{Q}}_\eta:\xi\leq\gamma, \eta< \gamma\rangle  $ be a  countable support iteration. Let $\theta$ be a large enough regular cardinal and $N$ a countable elementary submodel of $H(\theta)$ containing $\gamma, \P_\gamma$ and $\xi\in N\cap \gamma.$ If $q$ is $(N,\P_\xi)$-generic and $q\forces_{\P_\xi}``\dt{p}\in \dt{\P}_{\xi,\gamma}\cap N[G_\xi]",$ then there exists $q^*\in \P_\gamma$ such that $q^*\restriction\xi=q$ and $q\forces_{\P_\xi}``q^*\restriction [\xi,\gamma)=\dt{p}"$ and $\supt(q^*)\subseteq \xi\cup N.$
\end{lemma}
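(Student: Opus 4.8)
The plan is to fix a $\P_\xi$-generic filter $G$ containing $q$ and to build $q^*$ by realizing the tail condition $p:=\dt{p}[G]$ coordinate by coordinate, using Schlindwein's correspondence between $\dt{\P}_{\xi,\gamma}$-names and $\P_\gamma$-names (Lemma~\ref{lem:schlindweinmain}). First I would exploit the genericity of $q$. Since $q$ is $(N,\P_\xi)$-generic, $N[G]\prec H(\theta)^{V[G]}$ and $N[G]\cap\mathrm{Ord}=N\cap\mathrm{Ord}$; in particular $N[G]\cap\omega_1=N\cap\omega_1$, so $N[G]$ computes countability correctly on its own ordinals. By hypothesis $p\in\P_{\xi,\gamma}\cap N[G]$ (Definition~\ref{def-N[G]}), so $\supt(p)$ is a countable set lying in $N[G]$; enumerating it by a function in $N[G]$ shows each of its elements is an ordinal below $\gamma$ in $N[G]$, whence $\supt(p)\subseteq N\cap[\xi,\gamma)$. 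This already accounts for the support requirement, since below $\xi$ the support of $q^*$ will simply be that of $q$.

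Next I would define $q^*$ directly: set $q^*\restriction\xi=q$; for $\eta\in[\xi,\gamma)$ put $q^*(\eta)=\dt{\mathbbm{1}}_\eta$ whenever $\eta\notin\supt(p)$, and otherwise let $q^*(\eta)$ be the genuine $\P_\eta$-name extracted from the $\eta$-th coordinate of $\dt{p}$ through the operator $I^\xi_\eta$ of Definition~\ref{schlindwein-maindef} (legitimate by Lemma~\ref{lem:schlindweinmain}(2)). I would then show $q^*\restriction\eta\in\P_\eta$ by induction on $\eta\le\gamma$. At a successor $\eta+1$ the only new requirement is $q^*\restriction\eta\forces_{\P_\eta}q^*(\eta)\in\dt{\mathbb{Q}}_\eta$, which follows by transporting the statement ``$\dt{p}(\eta)\in\dt{\mathbb{Q}}_\eta$'' across the quotient via Lemma~\ref{lem:schlindweinmain}(3). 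At a limit $\eta$ the initial segments are conditions by the induction hypothesis and $\supt(q^*\restriction\eta)\subseteq\xi\cup\supt(p)$ is countable, so $q^*\restriction\eta\in\P_\eta$ by the countable-support clause. Taking $\eta=\gamma$ yields $q^*\in\P_\gamma$ with $q^*\restriction\xi=q$ and $\supt(q^*)\subseteq\xi\cup N$, and a final application of Lemma~\ref{lem:schlindweinmain}(3) upgrades the coordinatewise agreement to $q\forces_{\P_\xi}q^*\restriction[\xi,\gamma)=\dt{p}$.

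The hard part is this coordinatewise extraction. Abstractly $\dt{p}$ only names an element of the quotient $\dt{\P}_{\xi,\gamma}$, and the delicate point is to read off genuine $\P_\eta$-names $q^*(\eta)$ so that \emph{simultaneously} (a) $q^*\restriction\eta$ forces $q^*(\eta)$ into $\dt{\mathbb{Q}}_\eta$ and (b) the assembled tail $q^*\restriction[\xi,\gamma)$ is forced to be \emph{equal} to $\dt{p}$, not merely below it. This is precisely what Schlindwein's coherent saturation is engineered to provide: the forcing-equivalence in Lemma~\ref{lem:schlindweinmain}(3) lets one pass first-order statements---including the equalities defining each coordinate---back and forth between $\dt{\P}_{\xi,\gamma}$ over $\P_\xi$ and $\P_\gamma$, while the hypothesis $\dt{p}\in N[\dt{G}_\xi]$ together with the ordinal-preservation afforded by $(N,\P_\xi)$-genericity confines all the witnessing data, and hence $\supt(q^*)\setminus\xi$, to $N$. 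I expect the bookkeeping required to keep (a), (b), and the support condition true at \emph{every} coordinate at once---rather than any one of them in isolation---to be the main technical obstacle, and it is exactly here that the ``nice'' saturation of the $\P_\gamma$ built into Schlindwein's framework, as opposed to an arbitrary one, is indispensable.
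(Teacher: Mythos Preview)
The paper does not supply its own proof of this lemma; it simply cites it as Lemma~13 of Schlindwein~\cite{Schlindwein94}. Your sketch follows Schlindwein's argument: use $(N,\P_\xi)$-genericity of $q$ to force $\supt(\dt{p})\subseteq N\cap\gamma$, then recover each coordinate $q^*(\eta)$ through the $I^\xi_\eta$ machinery of Definition~\ref{schlindwein-maindef} and Lemma~\ref{lem:schlindweinmain}.

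One point worth tightening: you define $q^*(\eta)$ according to whether $\eta\in\supt(p)$ for a \emph{fixed} generic $G$, but $q^*$ must be built in $V$, and $\supt(\dt{p}[G])$ may vary with $G$. The $G$-independent split you actually want is $\eta\in N\cap[\xi,\gamma)$ versus $\eta\notin N$: for $\eta\notin N$ your support analysis already shows $q\forces_{\P_\xi}\dt{p}(\eta)=\dt{\mathbbm{1}}_\eta$, so setting $q^*(\eta)=\dt{\mathbbm{1}}_\eta$ is harmless; for the countably many $\eta\in N\cap[\xi,\gamma)$ you perform the extraction regardless of whether a particular $G$ puts $\eta$ in the support. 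This keeps $\supt(q^*)\subseteq\xi\cup N$ and makes the construction genuinely ground-model. With that adjustment your outline is correct and matches the cited proof.
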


The following Lemma was proven in \cite[Lemma 14]{Schlindwein94}.
\begin{lemma}\label{schlindwein14}
    Suppose that $\langle \P_\xi,\dt{\mathbb{Q}}_\eta:\xi\leq\gamma, \eta< \gamma\rangle  $, $N $ are as in Lemma \ref{schlindwein13} and $\xi<\eta<\gamma $ all belong to $N$ and $p\in \P_\gamma$ is a condition such that $p\restriction\xi\forces_{\P_\xi}``p\restriction[\xi,\gamma)\in N[\dt{G}_\xi]",$ then $p\restriction\eta\forces_{\P_\eta}``p\restriction[\eta,\gamma)\in N[\dt{G}_\eta]"$.
\end{lemma}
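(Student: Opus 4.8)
The plan is to prove the statement in its semantic form and reduce everything to the coherence of Schlindwein's $I$-operator. By the density characterization of the name $N[\dt G]$ in Definition~\ref{def-N[G]}, the conclusion is equivalent to the assertion that for every $\P_\eta$-generic filter $G_\eta$ containing $p\restriction\eta$ one has $(p\restriction[\eta,\gamma))[G_\eta]\in N[G_\eta]$, where $N[G_\eta]=\{\dt b[G_\eta]:\dt b\in N\text{ a }\P_\eta\text{-name}\}$. So I would fix such a $G_\eta$, let $G_\xi$ be the induced $\P_\xi$-generic (which contains $p\restriction\xi$, since $(p\restriction\eta)\restriction\xi=p\restriction\xi$), and set $r:=(p\restriction[\xi,\gamma))[G_\xi]$. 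The hypothesis gives $r\in N[G_\xi]\subseteq N[G_\eta]$, say $r=\dt c[G_\xi]$ with $\dt c\in N$ a $\P_\xi$-name. Since $r$ is a function with domain $[\xi,\gamma)$ and $\eta,\gamma\in N$, the restriction $r\restriction[\eta,\gamma)$ is represented by the name $\dt c\restriction[\eta,\gamma)\in N$ and so also lies in $N[G_\xi]$.

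Next I would invoke the factorization of the iteration at $\eta$. In $V[G_\xi]$ the tail $\dt{\P}_{\xi,\gamma}[G_\xi]$ is itself a countable support iteration that factors at $\eta$, and $G_\eta$ decomposes as $G_\xi*H$ with $H$ generic over $V[G_\xi]$ for $\dt{\P}_{\xi,\eta}[G_\xi]$; moreover $r\restriction[\xi,\eta)=((p\restriction\eta)\restriction[\xi,\eta))[G_\xi]\in H$ because $p\restriction\eta\in G_\eta$. The crux of the argument is the \emph{coherence identity}
\[
  (p\restriction[\eta,\gamma))[G_\eta]=(r\restriction[\eta,\gamma))[H],
\]
which coordinatewise reads $(I_\zeta^\eta(p(\zeta)))[G_\eta]=((I_\zeta^\xi(p(\zeta)))[G_\xi])[H]$ for each $\zeta\in[\eta,\gamma)$: interpreting $p(\zeta)$ as a tail-name based at $\eta$ along $G_\eta$ agrees with first interpreting it as a tail-name based at $\xi$ along $G_\xi$ and then reinterpreting it along the intermediate generic $H$. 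This is a purely structural statement about the iteration, not involving $N$, and I expect it to be the main obstacle. I would prove it by induction on $\P_\gamma$-rank, unwinding Definition~\ref{schlindwein-maindef} and applying Lemma~\ref{lem:schlindweinmain}(3) to the factorizations at $\xi$ and at $\eta$, using the associativity $\P_\xi*\dt{\P}_{\xi,\eta}*\dt{\P}_{\eta,\gamma}\cong\P_\eta*\dt{\P}_{\eta,\gamma}$ of the three-step iteration to compare the two routes from $V$ to $V[G_\eta]$.

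Finally, granting the identity the conclusion is immediate. Since $r\restriction[\eta,\gamma)$ is represented by the name $\dt c\restriction[\eta,\gamma)\in N$, interpreting it by the further generic $H$ keeps its value inside $N[G_\xi][H]=N[G_\eta]$, and therefore
\[
  (p\restriction[\eta,\gamma))[G_\eta]=(r\restriction[\eta,\gamma))[H]\in N[G_\eta].
\]
As $G_\eta$ was an arbitrary generic containing $p\restriction\eta$, this is exactly the semantic form of the desired conclusion. The only remaining points are routine bookkeeping: that all restrictions and reinterpretations of names can be taken inside $N$, which follows by elementarity from $\xi,\eta,\gamma,\P_\gamma\in N$, and the standard identity $N[G_\xi][H]=N[G_\eta]$ for the two-step factorization.
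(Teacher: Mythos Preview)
The paper does not supply its own proof of this lemma; it simply cites \cite[Lemma 14]{Schlindwein94}. So there is no in-paper argument to compare against. Your overall strategy---pass to a generic $G_\eta$, pull back to the induced $G_\xi$, use the hypothesis to place the $\xi$-tail in $N[G_\xi]$, and then push forward to $N[G_\eta]$ via a coherence property of the $I$-operators---is the natural one and is very much in the spirit of Schlindwein's framework; the semantic reformulation via Definition~\ref{def-N[G]} is also correct.

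The one genuine imprecision is in the formulation of your ``coherence identity''. For $\zeta\in[\eta,\gamma)$ the object $(I_\zeta^\xi(p(\zeta)))[G_\xi]$ is, by Definition~\ref{schlindwein-maindef}, a $\dt{\P}_{\xi,\zeta}[G_\xi]$-name, not a $\dt{\P}_{\xi,\eta}[G_\xi]$-name, so the outer interpretation $[\,\cdot\,][H]$ does not type-check as written. What is actually needed is an intermediate step: in $V[G_\xi]$ the tail iteration $\dt{\P}_{\xi,\gamma}[G_\xi]$ carries its own $I$-operator, and the correct identity compares $(I_\zeta^\eta(p(\zeta)))[G_\eta]$ with the result of applying $I_\zeta^\xi$, interpreting by $G_\xi$, then applying the \emph{internal} $I$-operator of the tail iteration to reach a $\dt{\P}_{\xi,\eta}[G_\xi]$-name, and only then interpreting by $H$. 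This refined statement is what an induction on rank would establish. The same issue resurfaces in your last paragraph, where you treat the \emph{object} $r\restriction[\eta,\gamma)\in N[G_\xi]$ as if it were a name to be evaluated at $H$; to close the argument you must exhibit a single $\P_\eta$-name in $N$ whose $G_\eta$-value is $(p\restriction[\eta,\gamma))[G_\eta]$, or equivalently show that the passage from $r\restriction[\eta,\gamma)$ to $(p\restriction[\eta,\gamma))[G_\eta]$ is uniformly definable from parameters in $N$. With these repairs your outline is sound.
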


The following Lemma is proven in \cite[Lemma 2.5]{Abraham2010}
\begin{lemma}\label{lem:2genericsaregenericiniteration}
    Let $\P$ be a  forcing poset and $\dt{\mathbb{Q}}$ a $\P$-name for a forcing poset. Let $\theta$ be a large enough regular cardinal and $N$ a countable elementary submodel of the structure $(H(\theta),\in,\P*\dt{\mathbb{Q}}) $ then $(p,\dt{q})$ is $(N,\P*\dt{\mathbb{Q}})$-generic iff
    $$p \ {\rm is}\ (N,\P){\rm-generic}  $$
    and
    $$p\forces_\P `` \dt{q} \ {\rm is}\ (N[\dt{G}_\P],\dt{\mathbb{Q}}){\rm-generic}".$$

\end{lemma}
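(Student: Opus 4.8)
The plan is to reduce both directions of the equivalence to the standard ``no new ordinals'' characterization of genericity over a countable elementary submodel, applied uniformly to the three forcings $\P$, $\dt{\mathbb{Q}}$, and $\P*\dt{\mathbb{Q}}$. First I would record the background fact: for a countable $M\prec H(\theta)$ having the relevant poset as a member, a condition is $(M,\cdot)$-generic if and only if it forces $M[\dt{G}]\cap\mathrm{Ord}=M\cap\mathrm{Ord}$. The forward implication is immediate, since deciding a name for an ordinal along a dense set that meets $M$ predensely keeps the value inside $M$; the converse codes membership in a fixed maximal antichain by an ordinal. I would prove this once and then invoke it at each of $\P$, $\dt{\mathbb{Q}}$ (over $M=N[G]$), and $\P*\dt{\mathbb{Q}}$.

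The second ingredient is the factorization $N[G*H]=N[G][H]$ for a $(\P*\dt{\mathbb{Q}})$-generic filter $G*H$. Here I would use that $\P*\dt{\mathbb{Q}}\in N$ (hence $\P,\dt{\mathbb{Q}}\in N$ by elementarity), so that $N[G]\prec H(\theta)^{V[G]}$, $N[G][H]\prec H(\theta)^{V[G*H]}$, and $N[G*H]\prec H(\theta)^{V[G*H]}$. The inclusion $N[G*H]\subseteq N[G][H]$ follows by evaluating any $\P*\dt{\mathbb{Q}}$-name of $N$ in two steps, with the intermediate value lying in $N[G]$; the reverse inclusion reassembles a $\dt{\mathbb{Q}}$-name of $N[G]$ into a single two-step name in $N$. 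I would take care to reconcile this with the flexible definition of $N[\dt{G}_\P]$ in Definition~\ref{def-N[G]}: a name forced on a dense set to agree with a name in $N$ has the same evaluation, so the two readings of $N[G]$ coincide at least on their ordinals, which is all the counting argument needs.

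With these facts the equivalence is a short computation. Given a generic $G*H$ containing $(p,\dt{q})$, the chain $N\cap\mathrm{Ord}\subseteq N[G]\cap\mathrm{Ord}\subseteq N[G][H]\cap\mathrm{Ord}$ together with $N[G][H]=N[G*H]$ shows that the global equality $N[G*H]\cap\mathrm{Ord}=N\cap\mathrm{Ord}$ holds if and only if both inclusions are equalities. Translating each equality back through the preliminary characterization: the global one says $(p,\dt{q})$ is $(N,\P*\dt{\mathbb{Q}})$-generic, the first says $p$ is $(N,\P)$-generic, and the second — as $G\ni p$ and $H\ni\dt{q}[G]$ range over all generics — says $p\forces$ ``$\dt{q}$ is $(N[\dt{G}_\P],\dt{\mathbb{Q}})$-generic''. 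Quantifying over all such $G*H$ yields both directions at once.

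I expect the main obstacle to be the factorization $N[G*H]=N[G][H]$, and more precisely verifying that it survives under the nonstandard Definition~\ref{def-N[G]} of $N[\dt{G}_\P]$; once one checks that this definition agrees with the naive closure $\{\dt{b}[G]:\dt{b}\in N\}$ on a dense set, and hence on ordinals, the remainder is routine bookkeeping with iterated names.
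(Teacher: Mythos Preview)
The paper does not give its own proof of this lemma; it simply cites \cite[Lemma~2.5]{Abraham2010}. Your argument via the ``no new ordinals'' characterization of genericity together with the factorization $N[G*H]=N[G][H]$ is correct and is essentially the standard proof (and is close to the one Abraham gives). The chain $N\cap\mathrm{Ord}\subseteq N[G]\cap\mathrm{Ord}\subseteq N[G][H]\cap\mathrm{Ord}=N[G*H]\cap\mathrm{Ord}$ does exactly the work you describe, and quantifying over all generics $G*H\ni(p,\dt q)$ (equivalently, over all $G\ni p$ and then all $H\ni\dt q[G]$) translates cleanly into the three genericity statements.

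Your attention to Definition~\ref{def-N[G]} is well placed but the worry dissolves: the condition ``densely below $p$ there is a name in $N$ equal to $\dt a$'' is equivalent to ``$\dt a[G]\in\{\dt b[G]:\dt b\in N\}$ for every generic $G\ni p$'', so the paper's $N[\dt G_\P]$ evaluates to the usual $N[G]$ in any extension, not merely on ordinals. The only point where a little care is needed is in deducing that $p$ is $(N,\P)$-generic from the global hypothesis without assuming generics over $V$ exist; this is handled either by the direct dense-set argument (pull back a dense $D\subseteq\P$ in $N$ to $\{(r,\dt s):r\in D\}$) or by phrasing everything in terms of what conditions force rather than actual generics.
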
\label{lem:b-n-generic-implica-ng-generico}
The next Lemma will be used to prove that the two step iteration of $E_S$-proper forcings is $E_S$-proper.
\begin{lemma}
    Let $\P$ be a forcing poset and let $(S,<_S,<_{\lex})$ be a weakly bi-entangled lexicographically ordered Suslin line. Let $\theta$ be a large enough regular cardinal and let $N$ be a countable elementary submodel of $H(\theta)$ containing $\P$ and $S$ as elements. Suppose $q$ is $(N,\P,E_S)${\rm -generic}. If $b\in[S_{N\cap\al}]^2$ is $(N,E_S)${\rm-generic}, then $q\forces_\P ``b \ {\rm \ is}\ (N[\dt{G}],E_S){\rm-generic}". $
\end{lemma}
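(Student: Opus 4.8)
The plan is to fix an arbitrary generic filter $G \seb \P$ with $q \in G$, work in $V[G]$, and verify directly that $b$ satisfies the definition of $(N[G],E_S)$-genericity. First I would record the two facts that make the statement meaningful. Since $q$ is in particular $(N,\P)$-generic, we have $N[G]\cap\al = N\cap\al = \delta$; and since the underlying set of $S$ together with all of its levels lies in $V$, the set $[S_\delta]^{2}$ is unchanged in the extension. Hence $b \in [S_{N[G]\cap\al}]^{2}$ is a legitimate candidate. So I fix $A \seb [S]^{2}$ with $A \in N[G]$, a type $t$, and assume $b \in A$; the goal is to produce $a \in N[G]\cap A$ with $\stp(a,b)=t$.

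The first real step is to pull $A$ back to a name inside $N$. By Definition~\ref{def-N[G]} of the name $N[\dt{G}]$, for some condition of $G$ the set of conditions forcing a fixed name for $A$ to equal a name lying in $N$ is dense below it; genericity of $G$ then yields a $\P$-name $\dt{A} \in N$ with $A = \dt{A}[G]$. Since $S \in N$, the set $[S]^{2}$ is a ground-model object in $N$, so I may replace $\dt{A}$ by its intersection with (the check name for) $[S]^{2}$ and assume, without loss of generality, that $\forces_{\P}\dt{A} \seb [S]^{2}$ while keeping $\dt{A} \in N$.

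The heart of the argument is a density computation. I would show that the set
$$D := \{\, s \le q : s \forces`` b \notin \dt{A}",\ \text{or}\ \exists a \in N\cap[S]^{2}\,(\stp(a,b)=t\,\wedge\, s \forces`` a \in \dt{A}")\,\}$$
is dense below $q$. Given $s_{0} \le q$, if no extension of $s_{0}$ forces $b \in \dt{A}$ then $s_{0}$ already forces $b \notin \dt{A}$ and lies in $D$; otherwise I choose $r \le s_{0}$ with $r \forces`` b \in \dt{A} \seb [S]^{2}"$ and apply the $(N,\P,E_S)$-genericity of $q$, together with the $(N,E_S)$-genericity of $b$, to the name $\dt{A} \in N$ and the type $t$. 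This produces $a \in [S]^{2}\cap N$ and $s \le r$ with $\stp(a,b)=t$ and $s \forces`` a \in \dt{A}"$; such an $s$ witnesses $s \in D$ below $s_{0}$, establishing density.

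Finally I would close the loop using genericity of $G$: pick $s \in D \cap G$. Because $b \in A = \dt{A}[G]$ and $s \in G$, the first alternative in the definition of $D$ is impossible, so $s$ realizes the second, giving $a \in N\cap[S]^{2}$ with $\stp(a,b)=t$ and $s \forces`` a \in \dt{A}"$; then $a \in \dt{A}[G] = A$ and $a \in N \seb N[G]$, exactly as required. As $G$ was an arbitrary generic filter containing $q$, this yields $q \forces_{\P}$ ``$b$ is $(N[\dt{G}],E_S)$-generic''. I expect the main obstacle to be precisely that the witness $s$ delivered by $(N,\P,E_S)$-genericity need not belong to the generic filter $G$; recasting the conclusion as the density of $D$ is what allows the genericity of $G$ to produce a witness that is simultaneously inside $A$ and inside $N[G]$. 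A secondary point requiring care is the correct reading of the nonstandard name $N[\dt{G}]$ from Definition~\ref{def-N[G]} when extracting $\dt{A} \in N$.
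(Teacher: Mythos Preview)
Your proof is correct and follows essentially the same approach as the paper: reduce to a name $\dt{A}\in N$ using Definition~\ref{def-N[G]}, then apply the $(N,\P,E_S)$-genericity of $q$ together with the $(N,E_S)$-genericity of $b$ to produce the witness $a$. The only difference is presentational: you pass to a generic extension and phrase the argument as the density of $D$ below $q$, whereas the paper stays in the forcing language and argues directly that for any $r\le q$ forcing $b\in\dt{A}$ there is $s\le r$ forcing a witness into $\dt{A}$---your density argument is just the unwinding of this, and the extra wrapper (while not wrong) is not needed.
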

\begin{proof}
Let $\dt{A}$ be a $\P$-name for a subset of $[S]^2$  such that $q\forces_\P`` \dt{A}\in N[\dt{G}]" $ (we are not assuming that $\dt{A}\in N$ ). Fix a type $t:2\to\{<,>\}$. Suppose that $r$ is  an extension  of $q$ such that $r\forces_\P``b\in\dt{A}"$. By Definition \ref{def-N[G]}, there exists $r'\leq r$ and a $\P$-name $\dt{B}$ in $N$ such that $r'\forces_\P`` \dt{A}=\dt{B}$. Using our assumptions on $q$ and $b$, we can find a condition $s\leq r'$ and a pair of nodes $a\in M\cap [S]^2$ such that $\stp(a,b)=t$ and $s\forces_\P``a\in\dt{A}".$ This concludes the proof of the Lemma.
\end{proof}

The next Lemma plays a similar role, as Lemma \ref{lem:2genericsaregenericiniteration}, for $(M,\P,E_S)$-generic conditions.
\begin{lemma}\label{2stepiteration} Let $(S,<_S,<_{\lex})$ a weakly bi-entangled Suslin tree.
    Let $\P$ be a $E_S$-proper forcing poset  and $\dt{\mathbb{Q}}$ a $\P$-name for an  $E_S$-proper forcing poset. Let $\theta$ be a large enough regular cardinal and $N$ a countable elementary submodel of the structure $(H(\theta), \in,S,\P*\dt{\mathbb{Q}})$. If $p$ is  $(N,\P,E_S)$-generic  and $p\forces_\P`` \dt{q}\ {\rm is}\ (N[\dt{G}_\P],\dQ,E_S){\rm -generic}" $ then $(p,\dt{q})$ is $(N,\P*\dQ,E_S)$-generic.

\end{lemma}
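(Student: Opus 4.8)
The plan is to verify the two defining requirements of $(N,\P*\dQ,E_S)$-genericity for the condition $(p,\dt q)$: first that it is $(N,\P*\dQ)$-generic in the ordinary sense, and second the additional $E_S$-clause involving names, generic pairs of nodes, and types. The first requirement is immediate: by hypothesis $p$ is $(N,\P,E_S)$-generic, hence in particular $(N,\P)$-generic, and $p\forces_\P ``\dt q$ is $(N[\dt G_\P],\dQ,E_S)$-generic'', hence in particular $p$ forces that $\dt q$ is $(N[\dt G_\P],\dQ)$-generic. Applying Lemma~\ref{lem:2genericsaregenericiniteration} (with the structure containing $\P*\dQ$), these two facts together yield that $(p,\dt q)$ is $(N,\P*\dQ)$-generic.

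For the $E_S$-clause, I would start from the data it provides: a $\P*\dQ$-name $\dt A$ in $N$ for a subset of $[S]^2$, an $(N,E_S)$-generic pair $b\in[S_{N\cap\al}]^2$, a type $t$, and a condition $(p',\dt q')\le(p,\dt q)$ forcing $b\in\dt A$. The goal is to produce $a\in[S]^2\cap N$ and $(p'',\dt q'')\le(p',\dt q')$ with $\stp(a,b)=t$ and $(p'',\dt q'')\forces ``a\in\dt A$''. The natural strategy is to reinterpret everything below the first coordinate: view $\dt A$ in the $\P$-extension as (a name for) a $\dQ$-name $\dt A'$ for a subset of $[S]^2$, which $p$ forces to lie in $N[\dt G_\P]$ since $\dt A\in N$. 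Likewise $\dt q'$ is forced to be a condition in $\dQ$, and the second-coordinate component of the assumption $(p',\dt q')\forces ``b\in\dt A$'' becomes $p'\forces_\P ``\dt q'\forces_\dQ b\in\dt A'$''.

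Now I would invoke the hypothesis that $p$ forces $\dt q$ to be $(N[\dt G_\P],\dQ,E_S)$-generic. The key point is that the pair $b$ remains $(N[\dt G_\P],E_S)$-generic in the $\P$-extension: this is exactly the content of the Lemma immediately preceding the statement (the one asserting $q\forces_\P ``b$ is $(N[\dt G],E_S)$-generic''), using that $p$ is $(N,\P,E_S)$-generic and $b$ is $(N,E_S)$-generic. Therefore, working below $p'$ in the $\P$-extension, the $(N[\dt G_\P],\dQ,E_S)$-genericity of $\dt q$ applied to the name $\dt A'$, the generic pair $b$, the type $t$, and a witness $r'\le\dt q'$ of $b\in\dt A'$, produces (names for) $a\in[S]^2\cap N[\dt G_\P]$ and $\dt q''\le r'$ with $\stp(a,b)=t$ and $\dt q''\forces_\dQ ``a\in\dt A'$''. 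Since $a$ is forced into $N[\dt G_\P]$ and $b$ is a ground-model object, I would then descend to the first coordinate: the statement ``there exist such $a$ and such a $\dQ$-condition'' is forced by $p'$, and by the $(N,\P,E_S)$-genericity of $p$ (or simply the $(N,\P)$-genericity together with $a\in N[\dt G_\P]$ being forced equal to some $\dt b\in N$) I can find $p''\le p'$ in a manner producing an honest $a\in[S]^2\cap N$ with $\stp(a,b)=t$, together with the required $\dt q''$.

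The main obstacle I anticipate is bookkeeping across the two coordinates: carefully reinterpreting the $\P*\dQ$-name $\dt A\in N$ as a $\P$-name for a $\dQ$-name $\dt A'$ forced to lie in $N[\dt G_\P]$, and correctly translating the condition $(p',\dt q')$ and its extensions between the iterated poset and the two-step language, so that the witness $a$ extracted in the $\P$-extension can be pulled back to an element of $N$ in the ground model. This is precisely where Definition~\ref{def-N[G]} and the ``names outside $N$'' flexibility are essential, and where one must take care that the descent to the first coordinate does not break $\stp(a,b)=t$, which holds because $b$ and the tree $S$ are fixed ground-model objects and $a$ ends up a genuine element of $N$ rather than merely of $N[\dt G_\P]$.
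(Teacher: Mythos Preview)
Your proposal is correct and follows essentially the same route as the paper: reinterpret the $\P*\dQ$-name $\dt A$ as a $\P$-name for a $\dQ$-name lying in $N[\dt G_\P]$ (the paper does this via $I_2^1(\dt A)$), invoke the preceding lemma to see that $b$ stays $(N[\dt G_\P],E_S)$-generic, apply the forced $(N[\dt G_\P],\dQ,E_S)$-genericity of $\dt q$ to obtain $a$ and the second-coordinate extension, and then use $(N,\P)$-genericity to conclude $a\in N[G]\cap V=N$. You are in fact slightly more thorough than the paper in explicitly citing Lemma~\ref{lem:2genericsaregenericiniteration} for the ordinary $(N,\P*\dQ)$-genericity, which the paper's proof leaves implicit.
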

\begin{proof}
To prove that $(p,\dt{q})$ is $(N,\P*\dQ,E_S)$-generic, let $b\in [S_{N\cap\al}]^2$, $\dt{A}\in N$  a $\P*\dQ$-name for a subset of $[S]^2$ and  a type $t:2\to\{<,>\}$. Suppose that $(p_1,\dt{q}_1)\leq (p,\dt{q})$ and $(p_1,\dt{q}_1)\forces`` b\in\dt{A}".$ Let $\dt{B}:=I_2^1(\dt{A})$ (where we view $\P*\dQ$ as a 2-stage iteration). Notice that $\dt{B}\in N$ since is definable from $\P*\dQ$ and $\dt{A}$.

Our goal is to find $a\in[S]^2\cap N$ and $(p_2,\dt{q}_2)\leq (p_1,\dt{q}_1)$ such that $\stp(a,b)=t$ and $(p_2,\dt{q}_2)\forces`` a\in \dt{B}".$ To do this, let $G$ be any $(V,\P)$-generic filter containing $p_1$. Since $p\in G$ is $(N,\P,E_S)$-generic, then it follows from Lemma \ref{lem:b-n-generic-implica-ng-generico}  that $b$ is $(N[G],E_S)$-generic. By Lemma \ref{lem:schlindweinmain}(3), we have that  $p_1\forces_\P``\dt{q}_1\forces_{\dQ} b\in \dt{B}" $. Hence,   $\dt{q}_1[G]\forces ``b\in \dt{B}[G] "$ since $\dt{q}_1[G]$ is $(N[G],\dQ[G],E_S)$-generic and $\dt{B}[G]\in N[G]$ is a $\dQ[G]$-name for a subset of $[S]^2$, then we can find $a\in N[G]\cap [S]^2 $ and $q_2\leq \dt{q}_1[G]$ such that $\stp(a,b)=t$ and $q_2\forces`` a\in \dt{B}[G]".$ Since $p$ is $(N,\P)$-generic, we have that $N[G]\cap V=N\cap V$ and thus $a\in N$. Thus,

$$
H(\theta)[G]\models \exists q_2\leq q_1[G], a\in N (\stp(a,b)=t \land q_2\forces_{\dQ}``a\in \dt{B}[G]").
$$
Since $G$ was an arbitrary generic filter containing $p_1$, then we obtain that
$$
p_1\forces_\P ``\exists x\leq \dt{q}_1, a\in N (\stp(a,b)=t \land x\forces_{\dQ}``a\in \dt{B}[\dt{G}_\P])".
$$
By existential completeness there is a $\P$-name $\dt{q}_2$ such that
$$
p_1\forces_\P ``\dt{q}_2\leq \dt{q}_1, a\in N (\stp(a,b)=t \land \dt{q}_2\forces_{\dQ}``a\in \dt{B}[\dt{G}_\P])".
$$
 Using Lemma \ref{lem:schlindweinmain}(3), we obtain that $(p_2,\dt{q}_2)\leq (p_1,\dt{q}_1) $
and $(p_2,\dt{q}_2)\forces_{\P*\dQ}`` a\in \dt{A}$.

\end{proof}

The next Lemma was proven in \cite[Lemma 16]{Schlindwein94}.
\begin{lemma}\label{schlindwein16}
    Let $\P$ be a  forcing poset and $\dt{\mathbb{Q}}$ a $\P$-name for a forcing poset. Let $\theta$ be a large enough regular cardinal and $N$ a countable elementary submodel of the structure $(H(\theta),\in,\P*\dt{\mathbb{Q}}) .$
    Suppose $p$ is $(N,\P)$-generic, $p\forces_\P``\dt{q}\in \dQ\cap N[\dt{G}_\P]"$ and and $\tau\in N $ is a $\P*\dQ$-name such that $\forces_{\P*\dQ}``\tau\in {\rm ON}".$ Then there is a $\dt{q}_*$ such that $p\forces_\P``\dt{q}_*\leq \dt{q}\ \wedge \  \dt{q}_*\in N[\dt{G}_\P]"$ and $(p,\dt{q}_*)\forces_{\P*\dQ}`` \tau\in N"$.
\end{lemma}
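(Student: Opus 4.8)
The plan is to decide $\tau$ inside a generic extension by $\P$, working within the model $N[\dt{G}]$, and then pull the resulting condition back to a single $\P$-name via the maximal principle, exactly in the spirit of the proof of Lemma~\ref{2stepiteration}. Note that no properness of $\dQ$ is needed here; the argument only uses that $p$ is $(N,\P)$-generic.

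First I would fix a $(V,\P)$-generic filter $G$ with $p\in G$ and set $q_0:=\dt{q}[G]$, so that $q_0\in\dQ[G]\cap N[G]$ by hypothesis. Since $p$ is $(N,\P)$-generic we have $N[G]\prec H(\theta)^{V[G]}$ and $N[G]\cap V=N$; as forcing adds no ordinals, this yields $N[G]\cap\mathrm{ON}=N\cap\mathrm{ON}$. The name $\tau$ reinterprets in $V[G]$ as a $\dQ[G]$-name $\sigma$ for an ordinal, and since $\tau\in N$ we have $\sigma\in N[G]$.

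Next I would decide $\sigma$ below $q_0$ inside $N[G]$. The statement ``there exist $r\le q_0$ and an ordinal $\beta$ with $r\forces_{\dQ[G]}\sigma=\beta$'' holds in $H(\theta)^{V[G]}$, since the set of conditions deciding $\sigma$ is dense below $q_0$. By elementarity of $N[G]$ it holds there as well, producing a witnessing pair $(q_*,\beta)\in N[G]$. In particular $q_*\le q_0$, $q_*\in N[G]$, and $\beta\in N[G]\cap\mathrm{ON}=N$, so $q_*\forces_{\dQ[G]}\sigma\in N$. Because $G$ was an arbitrary generic containing $p$, this shows
$$p\forces_\P\text{``}\exists x\,(x\le\dt{q}\ \wedge\ x\in N[\dt{G}_\P]\ \wedge\ x\forces_{\dQ}\tau\in N)\text{''},$$
where membership in $N[\dt{G}_\P]$ is read off from Definition~\ref{def-N[G]}.

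The hard part is the final pull-back. By the maximal principle (existential completeness) I would extract a single $\P$-name $\dt{q}_*$ such that
$$p\forces_\P\text{``}\dt{q}_*\le\dt{q}\ \wedge\ \dt{q}_*\in N[\dt{G}_\P]\ \wedge\ \dt{q}_*\forces_{\dQ}\tau\in N\text{''}.$$
This immediately gives $p\forces_\P``\dt{q}_*\in\dQ\cap N[\dt{G}_\P]"$ and $p\forces_\P``\dt{q}_*\le\dt{q}"$. Unwinding the two-step forcing relation via Lemma~\ref{lem:schlindweinmain}(3) then converts the iterated statement $p\forces_\P``\dt{q}_*\forces_\dQ\tau\in N"$ into $(p,\dt{q}_*)\forces_{\P*\dQ}``\tau\in N"$, which is exactly the conclusion. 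The main delicacy is ensuring that the name extracted by the maximal principle lies simultaneously below $\dt{q}$ and inside $N[\dt{G}_\P]$; this works precisely because both ``$x\le\dt{q}$'' and ``$x\in N[\dt{G}_\P]$'' are expressible in the forcing language, the latter through Definition~\ref{def-N[G]}.
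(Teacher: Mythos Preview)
Your argument is correct. Note that the paper does not actually supply a proof of this lemma; it simply cites \cite[Lemma~16]{Schlindwein94}. Your approach---pass to a generic extension, use elementarity of $N[G]$ to decide the ordinal name below $\dt{q}[G]$ inside $N[G]$, then pull back via the maximal principle---is the standard one and mirrors the pattern the paper uses in the proof of Lemma~\ref{key:limititeration} (where the maximal principle is spelled out explicitly via a maximal antichain and definition by cases).

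One small notational point: when you write ``$x\forces_{\dQ}\tau\in N$'' in the displayed formulas, $\tau$ is a $\P*\dQ$-name, not a $\dQ$-name, so strictly speaking the inner forcing statement should involve $I_2^1(\tau)$ rather than $\tau$ itself; this is exactly what the paper does when it sets $\dt{B}:=I_2^1(\dt{A})$ in Lemma~\ref{key:limititeration}. Your $\sigma$ is precisely $I_2^1(\tau)[G]$, so the argument is right, and the application of Lemma~\ref{lem:schlindweinmain}(3) at the end then legitimately converts $p\forces_\P``\dt{q}_*\forces_{\dQ} I_2^1(\tau)\in N"$ into $(p,\dt{q}_*)\forces_{\P*\dQ}``\tau\in N"$.
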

Our iteration result rests on the following key lemma.
\begin{lemma}\label{key:limititeration}
    Let $S$ be a weakly bi-entangled Suslin line. Let $\P$ be an $E_S$-proper  forcing poset and $\dt{\mathbb{Q}}$ a $\P$-name for a  $E_S$-proper forcing poset. Let $\theta$ be a large enough regular cardinal and $N$ a countable elementary submodel of the structure $(H(\theta),\in,\P*\dt{\mathbb{Q}},S) .$
    Suppose $p$ is $(N,\P,E_S)$-generic, $p\forces_\P``\dt{q}\in \dQ\cap N[\dt{G}_\P]"$  and $\dt{A}\in N $ is a $\P*\dQ$-name for a subset of $[S]^2$, $t$ is a type and $b\in[S_\delta]^2$ where $\delta:=N\cap\al$.  Then there is an $\dt{s}$ such that $p\forces_\P``\dt{s}\leq \dt{q}\ \wedge  \dt{s}\in N[\dt{G}_\P]"$ and $(p,\dt{s})\forces_{\P*\dQ}``$ if $b\in\dt{A}$ then there is an $a\in N\cap \dt{A}$ such that $\stp(a,b)=t"$.
\end{lemma}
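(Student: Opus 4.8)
The plan is to argue semantically in an arbitrary $\P$-extension and then pull the witness back to a $\P$-name using Schlindwein's correspondence, viewing $\P*\dQ$ as a two-stage iteration. Fix a $(V,\P)$-generic $G$ with $p\in G$ and write $q:=\dt{q}[G]$; by hypothesis $q\in\dQ[G]\cap N[G]$. Put $\dt{B}:=I_2^1(\dt{A})$, which lies in $N$ since it is definable from $\P*\dQ$ and $\dt{A}$, and let $B:=\dt{B}[G]$, a $\dQ[G]$-name for a subset of $[S]^2$ belonging to $N[G]$. By Lemma~\ref{lem:schlindweinmain}(3), $\dt{B}$ is exactly the name for which $\dQ$-statements about $B$ lift to $\P*\dQ$-statements about $\dt{A}$, so it suffices to produce a name $\dt{s}$ with $p\forces``\dt{s}\le\dt{q}\wedge\dt{s}\in N[\dt{G}_\P]"$ for which $p$ forces $\dt{s}\forces_\dQ\varphi(\dt{B})$, where $\varphi(X)$ abbreviates ``if $b\in X$ then there is $a\in N\cap X$ with $\stp(a,b)=t$''.

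First I would check that $b$ is generic over the extension. Since $S$ is weakly bi-entangled and $b\in[S_\delta]^2$ with $\delta=N\cap\al$, Lemma~\ref{lemma:nelementary-implies-bproper} gives that $b$ is $(N,E_S)$-generic; as $p$ is $(N,\P,E_S)$-generic and $p\in G$, Lemma~\ref{lem:b-n-generic-implica-ng-generico} shows $b$ is $(N[G],E_S)$-generic in $V[G]$. Recall also that, $p$ being $(N,\P)$-generic, $N[G]\cap V=N$, hence $N[G]\cap[S]^2=N\cap[S]^2$.

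The crux is to locate, \emph{inside} $N[G]$, a single condition $s\le q$ forcing $\varphi(B)$: the witness must lie in the model, whereas a master condition for $N[G]$ would force $\varphi(B)$ but escape $N[G]$, so $E_S$-properness of $\dQ$ cannot be invoked directly. Instead I exploit the $(N[G],E_S)$-genericity of $b$ against a projection of $B$. Set $B^{*}:=\{c\in[S]^2:\exists r\le q\ (r\forces_{\dQ[G]}c\in B)\}$; this is definable from $q$, $B$ and $\dQ[G]$, so $B^{*}\in N[G]$. If $b\notin B^{*}$, then $q\forces_{\dQ[G]}b\notin B$, and I take $s:=q$, which forces $\varphi(B)$ vacuously. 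Otherwise $b\in B^{*}\in N[G]$, and $(N[G],E_S)$-genericity yields $a\in N[G]\cap B^{*}$ with $\stp(a,b)=t$; since $a\in B^{*}$ and $a,q,B\in N[G]$, elementarity of $N[G]$ furnishes $r_a\in N[G]$ with $r_a\le q$ and $r_a\forces_{\dQ[G]}a\in B$, and I take $s:=r_a$. In this second case $s$ forces the conclusion of $\varphi(B)$ outright, because $a\in N[G]\cap[S]^2=N\cap[S]^2$ and $\stp(a,b)=t$, hence $s$ forces $\varphi(B)$. In either case $s\in\dQ[G]\cap N[G]$, $s\le q$, and $s\forces_{\dQ[G]}\varphi(B)$.

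Finally I would transfer back to a name. Since $G\ni p$ was arbitrary, $p$ forces that such an $s$ exists in $\dQ\cap N[\dt{G}_\P]$ below $\dt{q}$ forcing $\varphi(\dt{B})$; by existential completeness fix a name $\dt{s}$ with $p\forces``\dt{s}\le\dt{q}\wedge\dt{s}\in N[\dt{G}_\P]\wedge\dt{s}\forces_\dQ\varphi(\dt{B})"$. Applying Lemma~\ref{lem:schlindweinmain}(3) to $\dt{B}=I_2^1(\dt{A})$ converts $p\forces``\dt{s}\forces_\dQ\varphi(I_2^1(\dt{A}))"$ into $(p,\dt{s})\forces_{\P*\dQ}\varphi(\dt{A})$, which is precisely the required statement, while $p\forces``\dt{s}\le\dt{q}\wedge\dt{s}\in N[\dt{G}_\P]"$ holds by construction. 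I expect the main difficulty to be the one highlighted above, namely keeping the witness $\dt{s}$ within $N[\dt{G}_\P]$ rather than below it, which the projection $B^{*}$ together with the genericity of $b$ is designed to overcome; the remaining bookkeeping with the names $I_2^1(\dt{A})$ and $N[\dt{G}_\P]$ is routine given Lemma~\ref{lem:schlindweinmain}.
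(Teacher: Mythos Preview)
Your proof is correct and follows essentially the same route as the paper's. Both introduce $\dt{B}:=I_2^1(\dt{A})$, pass to a $\P$-extension, invoke the $(N[G],E_S)$-genericity of $b$ against the projection $B^{*}=\{c:\exists r\le q\ (r\forces c\in B)\}$ (the paper writes this as $X$), and use elementarity of $N[G]$ to place the witness $r_a$ back in $N[G]$. The only visible difference is packaging: you invoke the maximal principle (``existential completeness'') directly to obtain the single name $\dt{s}$, whereas the paper first shows that a suitable set $D$ is dense below $p$, picks a maximal antichain $I\subseteq D$, and uses the definition-by-cases lemma to amalgamate the witnesses into one name. These are standard equivalent devices for extracting a name from a forced existential.
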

\begin{proof}
Let $\dt{B}:=I_2^1(\dt{A})$ (where we view  $\P*\dQ$ as a 2-stage iteration).
Let $D$ denote the set of all conditions $p'\leq p$ such that either:
\begin{enumerate}[{\rm (a)}]
    \item $p'\forces_\P ``\dt{q}\forces_{\dQ} b\not \in \dt{B}"$ or
    \item there are $\dt{r}$ and $a\in [S\res \delta]^2$ such that $\stp(a,b)=t$ and $$p'\forces_\P ``\dt{r}\leq \dt{q}\ \wedge \ \dt{r}\in N[\dt{G}_\P]\wedge \dt{r}\forces_{\dQ} a\in \dt{B}". $$
\end{enumerate}
\begin{claim}
    We claim that $D$ is dense below $p$.
\end{claim}
\begin{proof}
To see this,  fix $p^*\leq p$ and suppose, without loss of generality, that $p^*\forces_\P ``\dt{q}\not\forces_{\dQ}b\notin \dt{B}".$ By further extending $p^*$, if necessary, we can find $\dt{q}_*\in N$ such that $p^*\forces_\P ``\dt{q}_*=\dt{q}".$
    Let $G$ be any $(V,\P)$-generic filter containing $p^*$. In $V[G]$, we have that $\dt{q}_*[G]\in N[G]$ and since $p$ is $(N,\P,E_S)$-generic we have that  $b$ is  $(N[G],E_S)$-generic.  Consider the set $X:=\{b\in [S]^2: \dt{q}_*[G]\not\forces_{\dQ[G]}b\notin \dt{B}[G]\}.$ Notice that $X\in N[G]$ as is definable from the parameters $S,\dt{B}$ and $\dt{q}_*$ which are in  $N$. Using that
$b$ is  $(N[G],E_S)$-generic, we infer that there is an $a\in[S\res \delta]^2$ such that $a\in X$ and $\stp(a,b)=t$. Since $a\in X$, we can find $r\leq \dt{q}_*[G]$ such that $r\forces_{\dQ[G]}`` a\in \dt{B}[G]".$

Since $\theta$ is large enough, it follows that
$$H(\theta)[G]\models \exists r\leq \dt{q}_*[G] \left (r\forces_{\dQ[G]}``a\in \dt{B}[G]"\right ) $$
as $N[G]\prec H(\theta)[G]$, then by Tarski-Vaugh there is such an $r\in N[G].$ In other words, there is an $r\in  \dQ[G]$ such that
$$H(\theta)[G]\models r\leq \dt{q}_*[G]\left ( r\in N[G] \ \wedge \ r\forces_{\dQ[G]}``a\in \dt{B}[G]" \right ).$$ Fix a $\P$-name $\dt{r}$ for such an $r$. Thus,
$$
V[G]\models \dt{r}[G]\leq \dt{q}_*[G]\left ( \dt{r}[G]\in N[G] \ \wedge \ \dt{r}[G]\forces_{\dQ[G]}``a\in \dt{B}[G]" \right )
$$
By the Theorem of forcing there is a condition $p'\in G $, which we may, and will assume, is less than $p^*$ such that
$p'\forces_\P ``\dt{r}\leq \dt{q}_* \left (\dt{r}\in N[\dt{G}_\P] \ \wedge \ \dt{r}\forces_{\dQ}``a\in \dt{B}"\right ).$
Hence, $p'\in D$ and therefore $D$ is dense below $p$. This concludes the proof of the Claim.
\end{proof}
Let $I$ be a maximal antichain inside $D$. We now define a function $f$ with domain $I$ as follows. If $p'\in I$ and $p'\forces_\P``\dt{q}\forces_{\dQ}b\notin\dt{B}"$, then let $f(p')=\dt{q}$. Otherwise, we take $f(p')$ to be any condition $\dt{r}$ witnessing that $p'$ satisfies clause (b) in the definition of $D$. By the definition by cases Lemma, there is a $\P$-name $\dt{s}$ such that $p'\forces_\P``\dt{s}=f(p')"$ for all $p'\in I.$

 We shall prove that $\dt{s}$ satisfies the conclusion of the Lemma. First, we claim that  $p\forces_\P``\dt{s}\leq \dt{q}$ and  $\dt{s}\in N[\dt{G}_\P] "$. To see this, fix $p^*\leq p$ and let $p'\in I$ be a condition compatible with $p^*$. By further extending, $p^*$ we may assume that $p^*\leq p'$. Since $p'\forces_\P``f(p')=\dt{r}, \dt{r}\leq \dt{q} $ and $\dt{r}\in N[\dt{G}_\P]",$ then so does $p^*$. Since $p^*$ was arbitrary, it follows that $p$ forces that $\dt{s}\in N[\dt{G}_\P]$.

 Finally, let us show that $(p,\dt{s})\forces_{\P*\dQ}``$ if $b\in\dt{A}$ then there is a $a\in[S\res \delta]^2\cap \dt{A}$ such that $\stp(b,a)=t"$. In order to see this, fix an extension $(p^*,\dt{s}_*)\leq (p,\dt{s})$ such that $(p^*,\dt{s}_*)\forces_{\P*\dQ}``b\in \dt{A}".$ By further extending $p^*$, if necessary, we may assume that $p^*\leq p'$ for some $p'\in I$.
 Using Lemma \ref{lem:schlindweinmain} (3), we obtain that $p^*\forces_\P``\dt{s}_*\forces_{\dQ} b\in \dt{B}".$ Thus, we have that there is an $a\in [S\res \delta]^2$ such that $\stp(a,b)=t$ and $p'\forces_\P``f(p')=\dt{s}\wedge f(p')\forces_{\dQ} a\in \dt{B}"$.  It follows that $p^*\forces_\P ``\dt{s}_*\leq \dt{s}"$. Using Lemma \ref{lem:schlindweinmain} (3). we obtain that $(p^*,\dt{s}_*)\forces_{\P*\dQ}``a\in \dt{A}".$ This concludes the proof of the Lemma.

\end{proof}

We are now ready to prove the main Theorem of the section.
\begin{theorem}\label{main:preservationthm}

    Let $(S,<_s,<_{\lex})$ be a weakly bi-entangled Suslin  line. Let $\langle \P_\xi,\dt{\mathbb{Q}}_\eta:\xi\leq\gamma, \eta< \gamma\rangle  $ be a countable support iteration of $E_S$-proper forcing posets. Let $\theta$ be a large enough regular cardinal. Let $N$ be a countable elementary submodel of $H(\theta)$  which contains $\gamma,\P_\gamma, S.$  For all $\xi\in N\cap \gamma$ and $q\in \P_{\xi}\cap N$ which is $(N,\P_{\xi},E_S)$-generic the following holds. If $\dt{p}_{\xi,\gamma}$ is a $\P_{\xi}$-name such that
    $$q\forces_{\P_{\xi}}`` \dt{p}_{\xi,\gamma}\in \dt{\P}_{\xi,\gamma}  \cap N[\dt{G}_{\xi}]"$$
    where $\dt{G}_{\xi}$ denotes the canonical $\P_{\xi}$-name for the generic filter over $\P_\xi$, then there exists an $(N,\P_\gamma,E_S)$-generic condition $q^*$ such that $supt(q^*)\subseteq \xi\cup N$, $ q^*\restriction_{\xi}=q$ and
    $$ q\forces_{\P_{\xi}}`` q^*\restriction_{[\xi,\gamma)}\leq \dt{p}_{\xi,\gamma}".$$

\end{theorem}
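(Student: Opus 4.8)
The plan is to prove the statement by induction on the length $\gamma$ of the iteration, keeping the countable model $N$ fixed and quantifying over all $\xi\in N\cap\gamma$ together with all $(N,\P_\xi,E_S)$-generic $q$ and admissible tail names $\dt{p}_{\xi,\gamma}$. The underlying strategy is to reproduce Schlindwein's proof of the iterability of properness (the engine behind Lemmas~\ref{schlindwein13} and~\ref{schlindwein14}) while replacing every appeal to ordinary $(N,\P)$-genericity by its $E_S$-enhanced counterpart, so that the $E_S$-requirements are discharged alongside the usual ones. Indeed, the purely ``geometric'' content of the conclusion --- that $q^*$ can be found with $\supt(q^*)\seb\xi\cup N$, $q^*\restriction\xi=q$ and $q\forces_{\P_\xi}$ ``$q^*\restriction[\xi,\gamma)\le\dt{p}_{\xi,\gamma}$'' --- is exactly what Lemma~\ref{schlindwein13} supplies once a suitable tail name has been produced; the genuine work is to arrange that the resulting $q^*$ is $(N,\P_\gamma,E_S)$-generic rather than merely $(N,\P_\gamma)$-generic.

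For the successor step $\gamma=\eta+1$ I would split into two subcases. When $\xi=\eta$ the tail $\dt{\P}_{\eta,\eta+1}$ is essentially $\dQ_\eta$, and I would argue in $V^{\P_\eta}$: since $q$ is $(N,\P_\eta,E_S)$-generic, for any generic $G_\eta\ni q$ we have $N[G_\eta]\prec H(\theta)^{V[G_\eta]}$ with $N[G_\eta]\cap\al=N\cap\al$, so the forced $E_S$-properness of $\dQ_\eta$ yields an $(N[\dt{G}_\eta],\dQ_\eta,E_S)$-generic extension of $\dt{p}_{\eta,\eta+1}$ lying in $N[\dt{G}_\eta]$; taking a name $\dt{r}$ for it, setting $q^*\restriction\eta=q$ and $q^*(\eta)=\dt{r}$, and invoking Lemma~\ref{2stepiteration} shows $q^*$ is $(N,\P_{\eta+1},E_S)$-generic. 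When $\xi<\eta$ I would first apply the inductive hypothesis at length $\eta$ to $\xi,q$ and the appropriate restriction of $\dt{p}_{\xi,\gamma}$ to obtain an $(N,\P_\eta,E_S)$-generic $q'$ with $q'\restriction\xi=q$, and then apply the $\xi=\eta$ case to $q'$; here Lemma~\ref{schlindwein14} is used to decompose $\dt{p}_{\xi,\gamma}$ and to keep the tails inside $N[\dt{G}]$.

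The limit case is the main obstacle, and is handled by a fusion. I would fix an increasing sequence $\tup{\xi_n:n<\omega}$ with $\xi_0=\xi$, each $\xi_n\in N$, cofinal in $\sup(N\cap\gamma)$, and enumerate in order type $\omega$ all tasks to be met: the $\P_\gamma$-names in $N$ for ordinals (to secure $(N,\P_\gamma)$-genericity, via Lemma~\ref{schlindwein16}) together with all triples $(\dt{A},b,t)$ with $\dt{A}\in N$ a name for a subset of $[S]^2$, $b\in[S_\delta]^2$ and $t$ a type, where $\delta=N\cap\al$ (to secure $E_S$-genericity, via Lemma~\ref{key:limititeration}). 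The key observation is that the $E_S$-clause for $(\dt{A},b,t)$ is equivalent to forcing ``if $b\in\dt{A}$ then there is $a\in N\cap\dt{A}$ with $\stp(a,b)=t$'', which is precisely the conclusion furnished by Lemma~\ref{key:limititeration}. I would then recursively build $(N,\P_{\xi_n},E_S)$-generic conditions $q_n$, coherent in the sense that $q_{n+1}\restriction\xi_n=q_n$, together with tail names $\dt{p}^{\,n}$ forced into $\dt{\P}_{\xi_n,\gamma}\cap N[\dt{G}_{\xi_n}]$ and decreasing along the fusion, starting from $q_0=q$ and $\dt{p}^{\,0}=\dt{p}_{\xi,\gamma}$. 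At stage $n$ I would discharge the $n$-th task on the block $[\xi_n,\xi_{n+1})$ by working in $V^{\P_{\xi_n}}$: apply Lemma~\ref{key:limititeration} (respectively Lemma~\ref{schlindwein16}) to modify the tail to some $\dt{s}\le\dt{p}^{\,n}\restriction[\xi_n,\xi_{n+1})$, still in $N[\dt{G}_{\xi_n}]$, that forces the relevant clause, and then invoke the inductive hypothesis at length $\xi_{n+1}$ to realize this modified tail as $q_{n+1}$ with $\supt(q_{n+1})\seb\xi_n\cup N$. Finally I would take $q^*$ to be the coherent union of the $q_n$ on $\bigcup_n\xi_n=\sup(N\cap\gamma)$, set equal to the value forced by $\dt{p}_{\xi,\gamma}$ on coordinates in $[\sup(N\cap\gamma),\gamma)$; since $\dt{p}_{\xi,\gamma}\in N[\dt{G}_\xi]$ its support lies in $N$, which both makes this last assignment legitimate and yields $\supt(q^*)\seb\xi\cup N$ and $q^*\restriction[\xi,\gamma)\le\dt{p}_{\xi,\gamma}$.

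The delicate points, where I expect most of the effort to go, are twofold. First, one must verify that each task is met below $q^*$ itself and not merely below the intermediate $q_{n+1}$; this is exactly why the fusion must interleave ordinary $(N,\P_{\xi_n})$-genericity with the $E_S$-tasks, so that the witnesses and deciding conditions produced at stage $n$ survive into the final generic extension. Second, one must maintain the bookkeeping that keeps every $\dt{p}^{\,n}$ inside $N[\dt{G}_{\xi_n}]$ under the repeated decomposition and recombination of tails, for which Lemmas~\ref{lem:schlindweinmain}, \ref{schlindwein13} and~\ref{schlindwein14} are the essential tools. Granting these, the limit of the construction is the desired $(N,\P_\gamma,E_S)$-generic condition $q^*$.
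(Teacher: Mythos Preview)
Your approach is essentially the paper's: induction on $\gamma$, with the successor step resolved via Lemma~\ref{2stepiteration} and the limit step by a fusion that interleaves Lemma~\ref{schlindwein16} (for ordinary genericity) with Lemma~\ref{key:limititeration} (for the $E_S$-clauses), advancing between stages by the inductive hypothesis and tracking tails inside $N[\dt{G}]$ through Lemmas~\ref{schlindwein13} and~\ref{schlindwein14}. One small correction: in the limit step Lemma~\ref{key:limititeration} must be applied with $\dQ$ equal to the \emph{full} tail $\dt{\P}_{\xi_n,\gamma}$ (since $\dt{A}$ is a $\P_\gamma$-name), producing $\dt{s}\le\dt{p}^{\,n}$ in that full tail, after which it is $\dt{s}\restriction[\xi_n,\xi_{n+1})$ that gets fed to the inductive hypothesis --- the paper in fact does these in the reverse order (inductive hypothesis first to reach $\xi_{n+1}$, then both lemmas applied from there), but either ordering works.
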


\begin{proof}
    The proof proceeds by induction on $\gamma$. So suppose the statement holds for all $\eta<\gamma$.  Fix $\xi\in\gamma\cap N$,  $q$ and $\dt{p}_{\xi,\gamma}$ as in the hypothesis of the Theorem. Let $\delta:=N\cap\al$. We will build $q^*$ to witness the conclusion.

    First consider the case that $\gamma$ is a successor, let say $\gamma=\eta+1$. Notice that $\eta$ is also in $N$. Find, using Lemma \ref{schlindwein13}, a condition $q'\in \P_\gamma$ such that $q'\restriction\xi=q$, $$q\forces_{\P_\xi}``q'\restriction[\xi,\gamma)=\dt{p}_{\xi,\gamma}" \ {\rm and}\  \supt(q')\subseteq \xi\cup N. $$
    Since $\eta\in N$ and  $q\forces_{\P_{\xi}}`` \dt{p}_{\xi,\gamma}\in \dt{\P}_{\xi,\gamma}  \cap N[\dt{G}_{\xi}]$, it follows that $q\forces_{\P_\xi}``q'\restriction [\xi,\eta)\in \dt{\P}_{\xi,\eta}\cap N[\dt{G}_\xi]".$ By induction hypothesis, there exists  an $(N,\P_{\eta},E_S)$-generic condition $q^+$ such that $q^+\restriction\xi=q$, $q\forces_{\P_\xi}``q^+\restriction[\xi,\eta)\leq q'\restriction[\xi,\eta)"$ and $\supt(q^+)\subseteq \xi\cup N.$
Since $q\forces_{\P_\xi}``q'\restriction[\xi,\gamma)\in N[\dt{G}_\xi]"$ and $\eta\in N$, then it follows, from Lemma \ref{schlindwein14}, that $q'\restriction \eta\forces_{\P_\eta}``q'(\eta)\in \dQ_\eta\cap N[\dt{G}_\eta]".$ Thus $q^+\restriction \eta$ also forces the above statement. Since $q^+$ is $(N,\P_\eta,E_S)$-generic, then $q\forces_{\P_\eta}``N[\dt{G}]\cap \al=\delta.$ Using that $\forces_{\P_\eta}``\dQ_\eta $ is $E_S$-proper$"$ and existential completeness, we can find a condition $q^*\in\P_\gamma $ such that $q^*\restriction \eta=q^+$ and $q^+\forces_{\P_\eta}``q^*(\eta)\leq q'(\eta)$ and $q^*(\eta)$ is $(N[\dt{G}_\eta],\P_\eta,E_S)$-generic$"$.  It follows from Lemma \ref{2stepiteration} that $q^*$ is $(N,\P_\gamma,E_S)$-generic and by construction satisfies the conclusion of the Theorem. This concludes the successor step part of the induction.

At this point the reader may be wondering if we are being overly pedantic in the amount of details but in view of the proof of Lemma 14 in \cite{Schlindwein94} it seems that some extra care is needed.

Next suppose that $\gamma$ is a limit ordinal.  Fix an increasing sequence of ordinals $\langle \gamma_n:n\in\omega\rangle$  cofinal in $N\cap \gamma$ with $\gamma_n\in N$ and
$\gamma_0=\xi.$  Let $\langle \sigma_n:n\in\omega\rangle$ enumerate the set of all  $\P_\gamma$-names $\sigma$  in $N$ such that $\forces_{\P_\gamma}``\sigma\in {\rm ON}" $.  Let $\langle (b_n,\dt{A}_n,t_n):n\in\omega\rangle$ list  all triples  $(b,\dt{A},t)$, where $b\in [S_{\delta}]^2$, $\dt{A}\in N$ is $\P_\gamma$-name for a subset of $[S]^2$  and  $t:2\to\{<,>\}$ is a type.

    We will recursively construct for $n<\omega$ conditions $q_n\in \P_{\gamma_n}$ and $p_n\in \P_\gamma$ such that:
    \begin{enumerate}[{\rm (1)}]
        \item $q_0=q$ and $p_0\restriction\xi=q$ and $q\forces_{\P_\xi}``p_0\restriction[\xi,\gamma)=\dt{p}_{\xi,\gamma}";$
        \item $q_n\in\P_{\gamma_n}$ is $(N,\P_{\gamma_n},E_S)$-generic;
        \item $p_n\in \P_\gamma$ and $p_n\restriction{\gamma_n}=q_n$;
        \item $q_{n+1}\res\gamma_n= q_n$;
        \item $q_{n+1}\leq p_n\restriction\gamma_{n+1}$;
        \item $q_n\forces_{\P_{\gamma_n}}``p_n\restriction{[\gamma_n,\gamma)}\in \dt{\P}_{\gamma_n,\gamma}\cap N[\dt{G}_{\gamma_n}]"$;
        \item $p_{n+1}\leq p_n$;
        \item $p_{n+1}\forces_{\P_{\gamma_n}}``\sigma_n\in \check{N}"$;
        \item $p_{n+1}\forces_{\P_\gamma}``$if $b_n\in\dt{A}_n$ then exists  $ a\in[S\restriction\delta]^2 $ such that $ \stp(a,b)=t$ and $  a\in\dt{A}_n"; $
        \item $\supt(q_n)\subseteq \gamma_0\cup N;$
        \item $ \supt(p_n)\subseteq \gamma_0\cup N$.

        \end{enumerate}

        Let $q_0:=q$ and find, using Lemma \ref{schlindwein13}, a condition $p_0$ such that $p_0\restriction\xi=q$ and $q\forces_{\P_\xi}``p_0\restriction[\xi,\gamma)=\dt{p}_{\xi,\gamma}".$ Thus, $q_0, p_0$ satisfy clause (1).

Assume that $n<\omega$ and $q_n$ and $p_n$  have been defined.

From clause (6) and the fact that $\gamma_{n+1}\in N$, we infer that

\[ q_n\forces_{\P_{\gamma_n}}``p_n\restriction[\gamma_n,\gamma_{n+1})\in \dt{\P}_{\gamma_n,\gamma_{n+1}}\cap N
[\dt{G}_{\gamma_n}]".\]

Thus, applying the induction hypothesis we obtain an $(N,\P_{\gamma_{n+1}},E_S)$-generic condition $q_{n+1}$ satisfying the following: \[ q_{n+1}\restriction\gamma_n=q_n, \  \supt(q_{n+1})\subseteq \gamma_0\cup N \]  and  \[ q_n\forces_{\P_{\gamma_n}}``q_{n+1}\restriction[\gamma_n,\gamma_{n+1})\leq p_n\restriction[\gamma_n,\gamma_{n+1})".\]

This implies that $q_{n+1}\leq p_n\restriction\gamma_{n+1}$ and hence, $q_{n+1}$ satisfies clauses (2), (4), (5) and (10).

    By clauses (3) and (6), $p_n\res \gamma_n\forces_{\P_{\gamma_n}}``p_n\restriction[\gamma_n,\gamma)\in \dt{\P}_{\gamma_n,\gamma}\cap N[\dt{G}_{\gamma_n}]"$ and thus, by Lemma \ref{schlindwein14}, $p_n\res \gamma_{n+1}\forces_{\P_{\gamma_{n+1}}}``p_n\restriction[\gamma_{n+1},\gamma)\in \dt{\P}_{\gamma_{n+1},\gamma}\cap N[\dt{G}_{\gamma_{n+1}}]"$.

Our goal to extend $q_{n+1}$ to a condition $p_{n+1}'\in \P_\gamma$ that ensures that clause (8) holds. To do this, use Lemma \ref{schlindwein16} to find a $\P_{\gamma_{n+1}}$-name $\dt{r}$ such that $$q_{n+1}\forces_{\P_{\gamma_{n+1}}}"\dt{r}\leq p_{n}\res [\gamma_{n+1},\gamma)\ \wedge \ \dt{r}\in N[\dt{G}_{\gamma_{n+1}}]" \ {\rm and}\  (q_{n+1},\dt{r})\forces_{\P_\gamma}``\tau\in N".$$
    Using Lemma \ref{schlindwein13}, find
    $p_{n+1}'\in\P_\gamma$ such that $p_{n+1}'\res\gamma_{n+1}=q_{n+1}$ and $$q_{n+1}\forces_{\P_{\gamma_{n+1}}}``p_{n+1}'\res [\gamma_{n+1},\gamma)=\dt{r}".$$

 Now our goal is to extend $p_{n+1}'$ to a further condition satisfying clause (9) while maintaining  the rest of the clauses. To do this, we apply Lemma \ref{key:limititeration} to the conditions $q_{n+1}$ and to the $\P_{\gamma_{n+1}}$-name $p_n'\res [\gamma_{n+1},\gamma)$ to obtain a $\P_{\gamma_{n+1}}$-name $\dt{s}$ such that $q_{n+1}\forces_{\P_{\gamma_{n+1}}}``\dt{s}\leq p_{n+1}'\res [\gamma_{n+1},\gamma)\ \wedge \ \dt{s}\in N[\dt{G}_{\gamma_{n+1}}]"$ and $(q_{n+1},\dt{s})\forces_{\P_\gamma}``$if $b_n\in\dt{A}_n$ then there exists $a\in[S\res \delta]^2\cap\dt{A}_n$ such that $\stp(a,b)=t".$ Next, using Lemma \ref{schlindwein13}, we can find $p_{n+1}$ such that $p_{n+1}\res \gamma_{n+1}=q_{n+1}$ and $q_{n+1}\forces_{\P_{\gamma_{n+1}}}``p_{n+1}\res [\gamma_{n+1},\gamma)=\dt{s}".$ This completes the recursive construction.
 It is clear from the construction that $q_n$ and $p_n$ satisfies all clauses.

Let $q^*=\bigcup_{n\in \omega} {q_n}^\frown \mathbbm{1}_{\P_\gamma}\res [\sup(\gamma\cap N),\gamma)$. It follows from clauses (4) and (10) that $supt(q^*)\subseteq \supt(q_0)\cup N $ and thus $q^*\in \P_\gamma$.
We are left to verify that $q^*$ is an  $(N,\P_\gamma,E_S)$-generic condition. First we prove that is $(N,\P_\gamma)$-generic. To see this, notice that, by clause (5) and an easy induction argument, $q^*\leq p_n$ for all $n\in\omega$. Hence, from clause (8), we obtain that $N\cap {\rm ON}=N[G_\gamma]\cap {\rm ON}$ for any $(V,\P_\gamma)$-generic filter $G_\gamma$ which contains $q*$. Thus, $q^*$ is $(N,\P_\gamma)$-generic.

To see that is $(N,\P_\gamma,E_S)$-generic. Let $b\in [S_\delta]^2$, $\dt{A}\in N$ a $\P_\gamma$-name for a subset of $[S]^2$ and a type $t:2\to\{<,>\}.$ Suppose $r\leq q^*$ and $r\forces_{\P_\gamma}``b\in \dt{A}".$ Fix $n$ such that $(b_n,\dt{A}_n,t_n)=(b,\dt{A},t).$ Now $r\leq p_{n+1}$, and thus $r$ forces the statement of clause (9).  This concludes the proof of the Theorem.

\end{proof}
\begin{corollary}\label{cor:main}
    Let $(S,<_s,<_{\lex})$ be a weakly bi-entangled Suslin  line. Let $\langle \P_\xi,\dt{\mathbb{Q}}_\eta:\xi\leq\gamma, \eta< \gamma\rangle  $ be a countable support iteration of $E_S$-proper forcing posets. Then $\P_\gamma$ is $E_S$-proper.
\end{corollary}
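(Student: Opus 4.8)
The plan is to derive this essentially for free from Theorem~\ref{main:preservationthm} by specializing it to the base coordinate $\xi = 0$. Unwinding the definition of $E_S$-properness, it suffices to fix a large enough regular cardinal $\theta$ and show that for club many countable $N \prec H(\theta)$ with $\P_\gamma, S \in N$, every $p \in \P_\gamma \cap N$ admits an $(N,\P_\gamma,E_S)$-generic extension. Since $\gamma$ is definable from $\P_\gamma$, elementarity gives $\gamma \in N$, so each such $N$ meets the hypotheses of Theorem~\ref{main:preservationthm}; and the collection of these $N$ is club in $[H(\theta)]^\omega$. If $\gamma = 0$ the poset is trivial and there is nothing to prove, so I would assume $\gamma > 0$, whence $0 \in N \cap \gamma$.

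First I would verify that the hypotheses of Theorem~\ref{main:preservationthm} hold at $\xi = 0$. Since $\P_0$ is the trivial poset $\{0\}$, its unique condition is vacuously $(N,\P_0)$-generic, and it is moreover $(N,\P_0,E_S)$-generic: any $\P_0$-name $\dt{A} \in N$ for a subset of $[S]^2$ is forced to equal an actual set $A \in N$, and for any type $t$ the required $a \in N \cap A$ with $\stp(a,b) = t$ is supplied directly by the $(N,E_S)$-genericity of $b$. Furthermore $N[\dt{G}_0]$ evaluates to $N$ and $\dt{\P}_{0,\gamma}$ evaluates to $\P_\gamma$, so taking $\dt{p}_{0,\gamma}$ to be the canonical name $\ck{p}$ for the given $p \in \P_\gamma \cap N$, we obtain $0 \forces_{\P_0} ``\ck{p} \in \dt{\P}_{0,\gamma} \cap N[\dt{G}_0]"$.

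With these hypotheses in place, Theorem~\ref{main:preservationthm} applied at $\xi = 0$ produces an $(N,\P_\gamma,E_S)$-generic condition $q^*$ with $q^* \restriction 0 = 0$ and $0 \forces_{\P_0} ``q^* \restriction [0,\gamma) \leq \ck{p}"$; the latter simply says $q^* \leq p$. This $q^*$ is precisely the witness demanded by the definition of $E_S$-properness, so the argument is complete.

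The hard part is entirely contained in Theorem~\ref{main:preservationthm}: the transfinite recursion interleaving the density requirements, the $(N,\P,E_S)$-genericity clauses, and the coherence of the Schlindwein tails $\dt{\P}_{\gamma_n,\gamma}$ is where all the difficulty lies. For the corollary itself I expect no genuine obstacle; the only point demanding a little care is confirming the base-case identifications $N[\dt{G}_0] = N$ and $\dt{\P}_{0,\gamma} = \P_\gamma$, together with the observation that the trivial condition already satisfies the $E_S$-genericity clause by virtue of $b$ being $(N,E_S)$-generic.
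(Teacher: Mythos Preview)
Your proposal is correct and is exactly the intended derivation: the paper states the corollary without proof immediately after Theorem~\ref{main:preservationthm}, so it is meant to follow by specializing the theorem to $\xi=0$, precisely as you outline. Your verification that the unique condition of $\P_0$ is $(N,\P_0,E_S)$-generic (via $(N,E_S)$-genericity of $b$) and the identifications $N[\dt{G}_0]=N$, $\dt{\P}_{0,\gamma}=\P_\gamma$ are the only points that need checking, and you handle them correctly.
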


\section{Consistency results}\label{sec:final-model}
In this section we prove our main theorem. Recall the following notion.
\begin{definition}
    Let $\theta$ be a cardinal. We say that $\theta$ is \emph{supercompact} if for every cardinal $\lambda$ there exists a transitive inner model $M$ and an elementary embedding $j:V\to M$ such that:
    \begin{enumerate}[{\rm (1)}]
        \item crit$(j)=\theta$.
        \item $j(\theta)>\lambda$.
        \item $M^\lambda\subseteq M.$
    \end{enumerate}
\end{definition}
We will also need the following result of Laver.
\begin{theorem}
    Let $\theta$ be a supercompact cardinal. Then there is a function $f:\theta\to H(\theta)$ such that for every set $x$, and for every $\lambda\geq \theta$ satisfying $x\in H(\lambda)$ there exists an elementary embedding $j:V\to M$ (as a above) such that $j(f)(\theta)=x.$
\end{theorem}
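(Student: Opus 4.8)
This is Laver's theorem on the existence of a \emph{Laver function} for a supercompact cardinal, so I would reproduce the classical diagonalization argument. The plan is to argue by contradiction: assume that no function $f \colon \theta \to H(\theta)$ has the stated anticipation property, and use this assumption to recursively build a single function $f$ together with a strictly increasing sequence of ordinals $\langle \lambda_\alpha : \alpha < \theta \rangle$ that records, at each stage, a ``least counterexample'' to $f$ being a Laver function. The contradiction will then come from reflecting the genuine least counterexample to the \emph{completed} function $f$ down through a supercompactness embedding to the critical stage $\theta$. Throughout I would use that a supercompact $\theta$ is inaccessible, so $H(\theta) = V_\theta$, and hence for any embedding $j$ with $\operatorname{crit}(j) = \theta$ we have $j \restriction H(\theta) = \mathrm{id}$; in particular, if $\operatorname{dom}(f) = \theta$ and $\operatorname{ran}(f) \subseteq H(\theta)$, then $f \subseteq j(f)$ and so $j(f) \restriction \theta = f$, while $j(f)(\theta)$ is a genuinely new value living in $H(j(\theta))$.

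The recursion runs as follows. Having defined $f \restriction \alpha$ and $\langle \lambda_\beta : \beta < \alpha\rangle$, I would let $\lambda_\alpha$ be the least ordinal $\geq \theta$ strictly above all earlier $\lambda_\beta$ for which there is a witness $x \in H(\lambda_\alpha)$ such that \emph{no} embedding $j$ with $\operatorname{crit}(j) = \theta$, $j(\theta) > \lambda_\alpha$ and $M^{\lambda_\alpha} \subseteq M$ satisfies $j(f \restriction \alpha)(\theta) = x$; then set $f(\alpha)$ to be the $<_w$-least such witness $x$ (and $f(\alpha) = \emptyset$ if no such pair exists). The essential point is that $f(\alpha)$ is computed uniformly by a fixed formula from the parameters $f \restriction \alpha$, $\langle \lambda_\beta : \beta < \alpha\rangle$ and the well-order $<_w$, so the recursion is legitimate and, crucially, is respected by elementary embeddings: for any $j$ with $\operatorname{crit}(j) = \theta$, the function $j(f)$ is built inside $M$ by exactly the same recursion.

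With $f$ in hand, the failure hypothesis applied to $f$ itself yields a genuine counterexample, and I would take the pair $(\lambda, x)$ with $\lambda$ least: $x \in H(\lambda)$, $\lambda \geq \theta$, and no $\lambda$-supercompactness embedding $j$ has $j(f)(\theta) = x$. Fix such a $j \colon V \to M$ with $j(\theta) > \lambda$ and $M^{\lambda} \subseteq M$. Now evaluate the definition of $j(f)$ inside $M$ at stage $\theta$: since $\operatorname{crit}(j) = \theta$ we have $j(f) \restriction \theta = f$ and the recorded ordinals below $\theta$ are just $\langle \lambda_\beta : \beta < \theta \rangle$, so $M$ computes $j(f)(\theta)$ as the least witness to the least counterexample of $f$ \emph{as evaluated in $M$}. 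The crux of the whole argument is the reflection step: I must verify that $M$ computes the \emph{same} least pair $(\lambda, x)$ as $V$ does. This uses the closure $M^{\lambda} \subseteq M$ together with $j(\theta) > \lambda$, which guarantee that $M$ contains all the relevant $\lambda$-supercompactness measures and can reconstruct the associated ultrapower embeddings, so that $M$ neither loses the fact that $(\lambda,x)$ is a counterexample nor spuriously declares some smaller $(\lambda',x')$ to be one. Granting this, $M \models j(f)(\theta) = x$, whence $j$ itself witnesses $j(f)(\theta) = x$, contradicting the choice of $(\lambda, x)$. I expect this reflection/absoluteness verification --- ensuring $M$ sees exactly the same least counterexample with no smaller one introduced or genuine one lost --- to be the main obstacle; the recursion itself and the fixing of $H(\theta)$ under $j$ are routine once the inaccessibility of $\theta$ is invoked.
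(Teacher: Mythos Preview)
The paper does not supply a proof of this theorem: it is simply quoted as ``a result of Laver'' and then used as a black box in the final model construction. So there is no paper proof to compare against; your proposal is offering strictly more than what appears in the text.

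That said, your sketch has a genuine error in the recursion. At stage $\alpha<\theta$ you ask for embeddings $j$ with $\operatorname{crit}(j)=\theta$ and test whether $j(f\restriction\alpha)(\theta)=x$. But $f\restriction\alpha$ has domain $\alpha<\theta=\operatorname{crit}(j)$, hence $j(f\restriction\alpha)=f\restriction\alpha$ and the value at $\theta$ is undefined; the clause is vacuous. Worse, when you later apply elementarity to compute $j(f)(\theta)$ inside $M$, the recursion transported by $j$ replaces $\theta$ by $j(\theta)$, so at stage $\theta$ the model $M$ would be looking for embeddings with critical point $j(\theta)$, not $\theta$ --- and you get no information about $f$.

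The classical Laver diagonalization defines the recursion \emph{locally}: at stage $\alpha$ one asks whether there exist $\lambda\ge\alpha$ and $x\in H(\lambda^{+})$ such that no $\lambda$-supercompactness embedding with critical point $\alpha$ sends $f\restriction\alpha$ to a function taking value $x$ at $\alpha$; if so, record the least such $\lambda$ and a witness $x$ as $f(\alpha)$. With this formulation, elementarity moves the recursion to $M$ with the parameter $\theta$ (not $j(\theta)$) at stage $\theta$, and your reflection step goes through as you describe. The absoluteness verification you flag as the main obstacle is indeed the heart of the matter, and your instinct to use $M^{\lambda}\subseteq M$ (more precisely, enough closure to ensure $M$ sees the same normal fine measures on $\mathcal{P}_{\theta}(\lambda')$ for $\lambda'\le\lambda$) is correct. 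Once the recursion is stated with critical point $\alpha$ at stage $\alpha$, the rest of your outline is the standard argument.
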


\begin{theorem}

Assume that there is a supercompact cardinal. There is a model of $\ZFC+\OGA+\mathfrak{c}=\omega_2$ in which there is a $2$-entangled Suslin line.
\end{theorem}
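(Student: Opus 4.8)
The plan is to force Krueger's line first and then run a Laver-guided countable support iteration over a supercompact cardinal to catch every instance of $\OGA$, relying on the preservation results of Sections~\ref{sec:ET-proper}--\ref{sec:iteration}. First I would fix a supercompact $\kappa$ in $V$ and force with Krueger's poset $\P$ of Section~\ref{sec:wbe}. Since $|\P| = \aleph_1 < \kappa$ this is small forcing, so by Levy--Solovay $\kappa$ remains supercompact in $V_1 := V[G_0]$, and $V_1$ contains the densely lexicographically ordered, weakly bi-entangled Suslin line $(S,<_S,<_{\lex})$ produced by Theorem~\ref{thm:weaklybientangled}. Working in $V_1$, I fix a Laver function $f:\kappa\to H(\kappa)$ and build a countable support iteration $\langle\P_\alpha,\dot\Q_\alpha:\alpha<\kappa\rangle$: at stage $\alpha$, if $f(\alpha)$ is a $\P_\alpha$-name $\dot G$ such that $\P_\alpha$ forces ``$\dot G=(X,E)$ is an open graph on a subset of $\mathbb R$ that is not countably chromatic'', then set $\dot\Q_\alpha:=\P_{\dot G}$, the poset of Section~\ref{sec:forcing-OGA}; otherwise let $\dot\Q_\alpha$ be trivial. (As usual it is enough to treat graphs on subsets of $\mathbb R$, since every separable metric space embeds in the Hilbert cube and the open-graph dichotomy transfers.)

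By the theorem of Section~\ref{sec:forcing-OGA} every iterand is $E_S$-proper and $S$-preserving. Hence, by Corollary~\ref{cor:main}, the limit $\P_\kappa$ is $E_S$-proper, and by Miyamoto's Theorem~\ref{miyamoto} it is $S$-preserving. Consequently, in $V_2 := V_1[G_\kappa]$ the tree $(S,<_S)$ is still Suslin, and by Corollary~\ref{cor:E-proper-wbe} the order $(S,<_{\lex})$ is dense and $2$-entangled; being the lexicographic order of a Suslin tree it is c.c.c.\ and not topologically separable, i.e.\ a $2$-entangled Suslin line. This is exactly the object required, provided $\OGA$ also holds in $V_2$.

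To verify $\OGA$ I would run the standard supercompact reflection. Fix in $V_2$ an open graph $G=(X,E)$ on $X\subseteq\mathbb R$ that is not countably chromatic; it suffices to produce an uncountable complete subgraph. Choose $\lambda$ large enough (so that in particular $M[G_\kappa]$ and $V_2$ will share the same reals) together with a $\P_\kappa$-name $\dot G\in H(\lambda)^{V_1}$ for $G$, and apply the Laver property to obtain $j:V_1\to M$ with $\operatorname{crit}(j)=\kappa$, $j(\kappa)>\lambda$, $M^\lambda\subseteq M$, and $j(f)(\kappa)=\dot G$. Since $\operatorname{crit}(j)=\kappa$ and each $\P_\alpha$ depends only on $f\restriction\alpha$, we get $j(\P_\kappa)\restriction\kappa=\P_\kappa$; and as $M^\lambda\subseteq M$ correctly sees that $G$ is not countably chromatic, the stage-$\kappa$ iterand of $j(\P_\kappa)$ is exactly $\P_G$, so $j(\P_\kappa)\restriction(\kappa+1)=\P_\kappa*\dot\P_G$. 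Forcing below with $\P_G$ over $V_2$, and extending to a $j(\P_\kappa)$-generic $G_{j(\kappa)}\supseteq G_\kappa$ in a further generic extension, produces an uncountable complete subgraph $Y$ of $G$ and lets me lift $j$ to $\hat j:V_2\to M[G_{j(\kappa)}]$. Then $\hat j[Y]$ is an uncountable complete subgraph of $\hat j(G)$, so $M[G_{j(\kappa)}]\models$ ``$\hat j(G)$ has an uncountable complete subgraph''; by elementarity $G$ has one in $V_2$. This establishes the dichotomy, hence $\OGA$.

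Finally, the cardinal arithmetic is exactly as in the consistency proof of $\PFA$: $\P_\kappa$ is proper (so $\omega_1$ is preserved) and, since $\kappa$ is inaccessible and each iterand has size $<\kappa$ at the stage it is invoked, $\P_\kappa$ has the $\kappa$-c.c.; thus cardinals $\ge\kappa$ are preserved and $\mathfrak c\le\kappa$, while the bookkeeping adds reals cofinally, giving $\mathfrak c=\kappa$. Since $\OGA$ implies $\mathfrak c=\aleph_2$, we conclude $\kappa=\omega_2=\mathfrak c$ in $V_2$. I expect the main difficulty here to be organizational rather than conceptual: the genuine preservation content has already been isolated in Corollary~\ref{cor:main} and in Section~\ref{sec:forcing-OGA}, so the delicate points are to arrange the Laver bookkeeping so that every nontrivial iterand is literally $\P_{\dot G}$ for a genuinely non-countably-chromatic open graph (keeping $E_S$-properness and $S$-preservation intact along the whole iteration), and then to carry out the reflection step cleanly.
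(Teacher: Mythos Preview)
Your argument is essentially the paper's: force Krueger's line, then run a Laver-guided countable-support iteration of $E_S$-proper, $S$-preserving posets, and use Corollary~\ref{cor:main} together with Miyamoto's theorem to see that the whole iteration still has both properties, so that Corollary~\ref{cor:E-proper-wbe} applies. The reflection argument you sketch is also the one the paper defers to \cite{jech2003}. The one substantive difference is the \emph{scope} of the iteration: you only allow the iterands $\P_{\dot G}$, whereas the paper lets the Laver function guess an arbitrary pair $(\dot P,\dot{\mathcal D})$ with $\dot P$ forced to be $E_S$-proper and $S$-preserving, and takes $\dot{\mathbb Q}_\xi=\dot P$ in that case.

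That difference matters for your last paragraph. You conclude $\mathfrak c=\omega_2$ by first asserting that ``the bookkeeping adds reals cofinally, giving $\mathfrak c=\kappa$'' and then invoking ``$\OGA\Rightarrow\mathfrak c=\aleph_2$''. Neither step is justified. For the first, nothing in your setup forces cofinally many stages to be nontrivial, nor that the $\P_G$'s collapse the cardinals between $\omega_1$ and $\kappa$; a Laver function need not hit names for non-countably-chromatic graphs on a cofinal set of $\alpha<\kappa$. For the second, whether $\OGA$ alone implies $\mathfrak c=\aleph_2$ is a well-known open problem (what is known is $\OGA\Rightarrow\mathfrak b=\aleph_2$). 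The paper sidesteps both issues precisely by enlarging the class of iterands: since $\omega_1$-closed forcings are $E_S$-proper and $S$-preserving (Lemma~\ref{lem:s-closed-ET-proper}), $\mathrm{Col}(\omega_1,\omega_2)$ is in the class, and the standard argument then gives $\kappa=\omega_2$ in the extension. The fix to your write-up is equally easy: intersperse $\mathrm{Col}(\omega_1,\alpha)$ at cofinally many stages (they are $\omega_1$-closed, hence permitted by the preservation theorems), and drop the appeal to ``$\OGA\Rightarrow\mathfrak c=\aleph_2$''.
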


\begin{proof}

First we start with a model of GCH in which there is a supercompact cardinal. Using Theorem \ref{thm:weaklybientangled}, we may further assume, that there is weakly bi-entangled lexicographically ordered  Suslin line $S$.
The rest of proof is nearly identical to the usual construction of Baumgartner of a model of the proper forcing axiom.
Fix a Laver function $f:\theta:\to H(\theta)$ which will be used as a bookkeeping device to anticipate all possible $S$-preserving and $E_S$-proper forcings.  We recursively construct a countable support iteration  $\langle \P_\xi,\dt{\mathbb{Q}}_\eta:\xi\leq\theta, \eta< \theta\rangle  $ as follows: At stage $\xi$, if $f(\xi)=(\dt{P},\dt{D})$ are $\P_\xi$-names such that $\dt{P}$ is forced to be $E_S$-proper and $S$-preserving and $\dt{\mathcal{D}}$ is forced to be a $\gamma$-sequence of dense open sets of $\dt{P}$ for some $\gamma<\theta,$ then we let $\dQ_\xi=\dt{P}$ and otherwise we let $\dQ_\xi$ is equal to the $\P_\xi$-name for the trivial forcing.

By Corollary \ref{cor:main} and Theorem \ref{miyamoto}, $\P_\theta$ is $E_S$-proper and $S$-preserving. In particular, $\omega_1$ is preserved. Also, standard arguments show that $\P_\theta$ is $\theta$-c.c., and by Lemma \ref{cor:E-proper-wbe}  $S$ is 2-entangled. Since $\omega_1$-closed forcing are $E_S$-proper and $S$-preserving (see Lemma~\ref{lem:s-closed-ET-proper}) and in particular the collapse Col$(\omega_1,\omega_2)$ is  $E_S$-proper and $S$-preserving, the iteration forces that $\omega_2=\theta.$ Finally, using the same elementary embedding argument as in the proof of the proper forcing axiom (see \cite[Theorem 31.21]{jech2003}), we can show that OGA holds in the extension.

\end{proof}
\section{Final remarks and open questions}

It is well-known that OGA can be forced without the use of large cardinals. So, the following question naturally arise.

\begin{problem}
    Is it possible to construct a model of OGA in which there is a 2-entangled linear order without large cardinals?
\end{problem}

\bibliographystyle{alpha}
\bibliography{references}

\end{document}